\theoremstyle{plain}
\newtheorem{lem}{Lemma}[section]
\newtheorem{cor}[lem]{Corollary}
\newtheorem{prop}[lem]{Proposition}
\newtheorem{thm}[lem]{Theorem}
\newtheorem{intthm}{Theorem}
\theoremstyle{definition}
\newtheorem{defn}[lem]{Definition}
\newtheorem{disc}[lem]{Remark}
\newtheorem{rmk}[lem]{Remark}
\newtheorem{notn}[lem]{Notation}
\newtheorem{fact}[lem]{Fact}
\newtheorem{para}[lem]{}
\newcommand{\cat}[1]{\mathcal{#1}}
\newcommand{\catd}{\cat{D}}
\newcommand{\catb}{\cat{B}}
\newcommand{\catbc}{\cat{B}_C}
\newcommand{\pd}{\operatorname{pd}}	
\newcommand{\gdim}{\mathrm{G}\text{-}\!\dim}	
\newcommand{\gkdim}[1]{\mathrm{G}_{#1}\text{-}\!\dim}	
\newcommand{\gcdim}{\gkdim{C}}	
\newcommand{\id}{\operatorname{id}}	
\newcommand{\fd}{\operatorname{fd}}
\newcommand{\depth}{\operatorname{depth}}
\newcommand{\edim}{\operatorname{edim}}
\newcommand{\lotimes}{\otimes^{\mathbf{L}}}
\newcommand{\HH}{\operatorname{H}}
\newcommand{\coker}{\operatorname{Coker}}
\newcommand{\im}{\operatorname{Im}}
\newcommand{\shift}{\mathsf{\Sigma}}
\newcommand{\Ker}{\operatorname{Ker}}
\newcommand{\ideal}[1]{\mathfrak{#1}}
\newcommand{\m}{\ideal{m}}
\newcommand{\n}{\ideal{n}}
\newcommand{\fm}{\ideal{m}}
\newcommand{\fn}{\ideal{n}}
\newcommand{\comp}[1]{\widehat{#1}}
\newcommand{\ol}{\overline}
\newcommand{\wti}{\widetilde}
\newcommand{\bbn}{\mathbb{N}}
\newcommand{\xra}{\xrightarrow}
\newcommand{\vf}{\varphi}
\newcommand{\y}{\mathbf{y}}
\newcommand{\x}{\mathbf{x}}
\newcommand{\f}{\mathbf{f}}
\renewcommand{\geq}{\geqslant}
\renewcommand{\leq}{\leqslant}
\newcommand{\Ext}[4][R]{\operatorname{Ext}_{#1}^{#2}(#3,#4)}	
\newcommand{\Rhom}[3][R]{\mathbf{R}\!\operatorname{Hom}_{#1}(#2,#3)}	
\newcommand{\Lotimes}[3][R]{#2\otimes^{\mathbf{L}}_{#1}#3}
\newcommand{\Tor}[4][R]{\operatorname{Tor}^{#1}_{#2}(#3,#4)}
\newcommand{\gc}{\text{G}_{C}}
\numberwithin{equation}{lem}
\begin{document}

\bibliographystyle{amsplain}

\title{Contracting endomorphisms and dualizing complexes}

\author[S. Nasseh]{Saeed Nasseh}
\address{Mathematics Department,
NDSU Dept 2750,
PO Box 6050,
Fargo, ND 58108-6050
USA}
\email{saeed.nasseh@ndsu.edu}
\urladdr{http://www.ndsu.edu/pubweb/\~{}nasseh/}

\author[S. Sather-Wagstaff]{Sean Sather-Wagstaff}
\email{sean.sather-wagstaff@ndsu.edu}
\urladdr{http://www.ndsu.edu/pubweb/\~{}ssatherw/}

\thanks{This material is based on work supported by North Dakota EPSCoR and 
NSF Grant EPS-0814442.
Sather-Wagstaff was supported in part by a grant from the NSA}



\keywords{Bass classes, contracting endomorphisms, dualizing complexes, $\gc$-dimensions, Frobenius endomorphisms, semidualizing complexes}
\subjclass[2010]{13A35, 13D05, 13D09}

\begin{abstract}
We investigate how one can detect the dualizing property for a chain complex over a commutative
local noetherian ring $R$. Our focus is on homological properties of contracting endomorphisms of $R$,
e.g., the Frobenius endomorphism when $R$ contains a field of positive characteristic.
\end{abstract}

\maketitle


\section*{Introduction} \label{sec0}

Throughout this paper, the term ``ring'' means ``commutative noetherian ring with identity'',
and ``module'' means ``unital module''. 
A ring is ``complete'' if it is complete (i.e., separated and complete) with respect to its Jacobson radical.
Let $R$ be a ring. For this section, assume that $(R,\m,k)$ is local.

An idea in commutative algebra that is now standard is the following: interesting properties of $R$ can be detected by
homological conditions on $k$; when $R$ contains a field of positive characteristic, such properties of $R$ can be detected
similarly by $^n\! R$. Here $^n\!R$ is the additive abelian group $R$ viewed as an $R$-module
via restriction of scalars along the $n$th iterated Frobenius map $f^n_R\colon R\to R$ given by $r\mapsto r^{p^n}$.

The somewhat canonical example of this is Auslander, Buchsbaum, Kunz, Rodicio, and Serre's 
work~\cite{auslander:hdlr,kunz:corlrocp,rodicio:oaroa,serre:sldhdaedmn} characterizing regular rings in terms of finite projective 
dimension of $k$ and finite flat dimension  of $^n\! R$. Analogous characterizations of the Gorenstein property are built from
Auslander and Bridger's G-dimension~\cite{auslander:smt} (or using similar ideas) by
Goto, Iyengar, Sather-Wagstaff, Takahashi, and Yoshino~\cite{goto:aponlrocp,iyengar:golh,takahashi:ccmlrbfm}.

A comparable characterization of the dualizing property for $R$-complexes in~terms of derived reflexive behavior of $k$
goes back to Hartshorne and Grothendieck~\cite{hartshorne:rad}.
The point of this paper is to give similar characterizations of dualizing complexes with respect to $^n\!R$.
We frame the conversation in terms of Christensen's semidualizing complexes~\cite{christensen:scatac}
(coming from Avramov and Foxby's relative dualizing complexes~\cite{avramov:rhafgd}),
and following Avramov, Iyengar, and Miller~\cite{avramov:holh} in terms of contracting endomorphisms.
(See Section~\ref{sec121004a} for terminology and background results.)
A special case of one of our main results is the following, which we prove in~\ref{para120927a}.

\begin{intthm}\label{cor120711a}
Let $\vf\colon R\to R$ be a module-finite
contracting endomorphism, and let $C$ be a semidualizing $R$-complex.
Let $^n\!R$ be the additive abelian group $R$ viewed as an $R$-module
via restriction of scalars along the $n$-fold composition $\vf^n\colon R\to R$.
Then the following conditions are equivalent:
\begin{enumerate}[\rm(i)]
\item\label{cor120711a1}
$C$ is a dualizing $R$-complex.

\item\label{cor120711a2}
$C\sim \Rhom{^{n}\!R}C$ for some $n>0$.

\item\label{cor120711a3}
$\gcdim {}^n\!R<\infty$ and $C$ is derived  $\Rhom{^{n}\!R}C$-reflexive for some $n>0$.

\item\label{cor120711a4}
$\gcdim {}^n\!R<\infty$ for infinitely many $n>0$.
\end{enumerate}
If $R$ has a dualizing complex $D$, then these conditions are equivalent to the following:
\begin{enumerate}[\rm(v)]
\item $\gcdim {}^n\!R<\infty$  and $^n\!R\otimes^{\textbf{L}}_R \Rhom CD$
is derived  $\Rhom CD$-reflexive for some $n>0$.
\end{enumerate}
\end{intthm}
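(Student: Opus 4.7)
The plan is to establish the equivalences (i)--(iv) via the cycle (i)$\Rightarrow$(ii)$\Rightarrow$(iii)$\Rightarrow$(iv)$\Rightarrow$(i), then to treat (v) separately under the dualizing-complex hypothesis.

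For (i)$\Rightarrow$(ii): suppose $C$ is dualizing. Since $\vf$ is module-finite, $\Rhom{{}^n\!R}{C}$ is again a dualizing complex for $R$ (the functor $\Rhom{{}^n\!R}{-}$ sends dualizing complexes to dualizing complexes along module-finite ring maps). Dualizing complexes over a local ring are unique up to shift, so $\Rhom{{}^n\!R}{C}\simeq\shift^j C$ for some integer $j$. Tracking amplitude and depth---and using that a contracting endomorphism of a local ring preserves the relevant numerical invariants---pins $j$ at $0$, yielding $C\sim\Rhom{{}^n\!R}{C}$.

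For (ii)$\Rightarrow$(iii): the equivalence $C\sim\Rhom{{}^n\!R}{C}$ makes $C$ derived $\Rhom{{}^n\!R}{C}$-reflexive because every semidualizing complex is reflexive with respect to itself, and the same equivalence forces ${}^n\!R$ to be $C$-reflexive, i.e.\ $\gcdim{}^n\!R<\infty$. For (iii)$\Rightarrow$(iv): starting from one admissible $n$, the iterates $\vf^{mn}$ remain module-finite and contracting, so an ascent argument for $\gc$-dimension along module-finite contracting maps yields $\gcdim{}^{mn}\!R<\infty$ for every $m\geq 1$, producing infinitely many valid $n$.

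The main obstacle is (iv)$\Rightarrow$(i). The contracting hypothesis ensures $\vf^n(\m)\subseteq\m^{s^n}$ for some $s\geq 2$, so as $n\to\infty$ the module ${}^n\!R$ homologically approximates the residue field $k$. Combining this with the infinite-$n$ assumption and the Avramov--Iyengar--Miller machinery for contracting endomorphisms, one should transfer finite $\gcdim{}^n\!R$ to finite $\gcdim k$; finite $\gc$-dimension of $k$ then forces $C$ to be dualizing by Christensen's characterization. The technical crux lies precisely in extracting the limiting statement about $k$ from the infinitely-many-$n$ hypothesis. Finally, for (v), assume $R$ admits a dualizing complex $D$ and set $B=\Rhom{C}{D}$, which is itself semidualizing. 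The functors $\Rhom{-}{D}$ and $\Lotimes{D}{-}$ realize Foxby equivalence between the Auslander class $\catac$ and the Bass class $\catbc$. Under this dictionary, the derived $B$-reflexivity of $\Lotimes{{}^n\!R}{B}$ appearing in (v) translates into the derived $\Rhom{{}^n\!R}{C}$-reflexivity of $C$ together with the finiteness $\gcdim{}^n\!R<\infty$ that appears in (iii), thereby yielding (iii)$\Leftrightarrow$(v).
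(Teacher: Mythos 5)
Your outline goes astray in the two implications that carry the real content, (iii)$\Rightarrow$(iv) and (iv)$\Rightarrow$(i), and the Foxby-equivalence dictionary for (v) is asserted rather than exhibited.

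For (iii)$\Rightarrow$(iv) you write that ``an ascent argument for $\gc$-dimension along module-finite contracting maps yields $\gcdim {}^{mn}\!R<\infty$.'' No such ascent is automatic: knowing that ${}^n\!R$ is derived $C$-reflexive does not by itself say anything about ${}^{2n}\!R$. What the paper actually does is first show that the two hypotheses in (iii) force $C\sim\Rhom{{}^n\!R}{C}$. This is not free: it requires comparing Poincar\'e series. One shows, via $I_R^{C(\vf^n)}(t)=I_R^C(t)$ and Foxby's Bass-series identity $I^R_R(t)=P_C^R(t)I^C_R(t)$, that $C$ and $\Rhom{{}^n\!R}{C}$ have the same Poincar\'e series; combined with the reflexivity assumption this yields $C\sim \Rhom{{}^n\!R}{C}$ (cf.~\cite[Fact 2.28]{sather:bnsc}). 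Only then can one compose: $\Rhom{{}^{2n}\!R}{C}\simeq\Rhom{{}^n\!R}{\Rhom{{}^n\!R}{C}}\sim\Rhom{{}^n\!R}{C}\sim C$, and inductively get infinitely many admissible $n$. Your sketch skips the Poincar\'e series step entirely.

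For (iv)$\Rightarrow$(i) you yourself flag the ``technical crux'' and leave it unresolved. The strategy you describe --- squeeze ${}^n\!R$ toward $k$ and argue $\gcdim_R k<\infty$ --- is not what the paper does and is not obviously implementable. The paper instead passes to the completion (so a dualizing complex $D$ exists), translates $\gcdim\vf^n<\infty$ into ${}^n\!R\in\cata_{C^{\dagger_D}}(R)$ via Fact~\ref{1}\eqref{item120924c}, observes that this forces $\Tor{i}{{}^n\!R}{C^{\dagger_D}}=0$ for $i\gg 0$ for infinitely many $n$, and then invokes the Tor-vanishing criterion~\cite[6.4.~Proposition]{iyengar:golh} for contracting endomorphisms to conclude $\pd_R(C^{\dagger_D})<\infty$, equivalently $\id_R(C)<\infty$. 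This is a genuinely different mechanism from a limiting argument to $k$, and without a concrete lemma carrying out your transfer the implication is unproved.

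Finally, the (iii)$\Leftrightarrow$(v) equivalence is not just ``read off'' from Foxby equivalence. The paper's argument explicitly computes $(\Lotimes{{}^n\!R}{C^{\dagger_D}})^{\dagger_D}\sim\Rhom{{}^n\!R}{C}$ (a chain of adjunction, base change, and $C^{\dagger_D\dagger_D}\simeq C$) and then uses the duality $A$ is derived $B$-reflexive iff $B^{\dagger_D}$ is derived $A^{\dagger_D}$-reflexive from~\cite[Proposition 3.9]{frankild:rrhffd}. You would need to supply that computation. A minor remark: in (i)$\Rightarrow$(ii) you worry about pinning the shift $j$ to $0$, but $\sim$ in this paper already means ``isomorphism up to shift,'' so that concern is vacuous.

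Overall, the architecture you chose (a direct cycle of implications in the module-finite case) is reasonable and close in spirit to the paper's general Theorem~\ref{thm120615a}, but the two substantive implications are left at the level of strategy rather than proof, and the strategy you propose for (iv)$\Rightarrow$(i) diverges from --- and is less concrete than --- the paper's Auslander-class/Tor-vanishing route.
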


A standard technique for working with the Frobenius involves reducing to the case where $R$ is $F$-finite. 
The next result shows how this works in our setting; it is contained in Theorem~\ref{thm120712a'}.

\begin{intthm}\label{prop120910a}
Let $R$ be a  local ring of prime
characteristic $p>0$, and let $C$ be a semidualizing $R$-complex.
Then the following conditions are equivalent:
\begin{enumerate}[\rm(i)]
\item\label{prop120910a1}
$C$ is a dualizing $R$-complex.
\item\label{prop120910a1'}
There is a complete weakly \'etale  $F$-finite local $R$-algebra $S$ such that
$\Lotimes{S}C$ is dualizing for $S$.
\item\label{prop120910a4'}
There is a complete weakly \'etale $F$-finite local $R$-algebra $S$ such that for infinitely many $n>0$ one has
$\gkdim{\Lotimes{S}C} f_S^n<\infty$.
\item
\label{prop120910a5'} 
There is a complete weakly \'etale $F$-finite local $R$-algebra $S$ such that
for some $n>0$ one has $\gkdim{\Lotimes{S}C} f_S^n<\infty$  and $^n\!S\otimes^{\textbf{L}}_S \Rhom[S]{\Lotimes SC}{D^S}$
is derived  $\Rhom[S]{\Lotimes SC}{D^S}$-reflexive, where $D^S$ is a dualizing $S$-complex.
\end{enumerate}
\end{intthm}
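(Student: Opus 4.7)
The plan is to reduce to the $F$-finite setting via a flat base change and then invoke Theorem~\ref{cor120711a}, whose hypothesis is satisfied because the Frobenius on an $F$-finite ring is a module-finite contracting endomorphism. Concretely, I would establish the cycle (i) $\Rightarrow$ (ii) $\Rightarrow$ (iii) $\Rightarrow$ (iv) $\Rightarrow$ (i).

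To begin, I would invoke (what should be established in Section~\ref{sec121004a}, via a Hochster--Huneke-type gamma construction or its refinement) the existence of a faithfully flat local $R$-algebra $S$ that is complete, weakly \'etale over $R$, and $F$-finite. This is the one existential ingredient needed for (i) $\Rightarrow$ (ii). Since $R\to S$ is faithfully flat with geometrically regular (indeed formally \'etale) fibers, the semidualizing property and the dualizing property both base-change cleanly: $\Lotimes SC$ is a semidualizing $S$-complex, and if $C$ is dualizing over $R$ then $\Lotimes SC$ has finite injective dimension over $S$ and is dualizing for $S$. This gives (i) $\Rightarrow$ (ii).

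For (ii) $\Rightarrow$ (iii) and (iii) $\Rightarrow$ (iv), I would fix the $F$-finite $S$ provided by (ii) or (iii) and apply Theorem~\ref{cor120711a} over $S$, with the module-finite contracting endomorphism $\vf = f_S$ and the semidualizing $S$-complex $\Lotimes SC$. The implications (\ref{cor120711a1}) $\Rightarrow$ (\ref{cor120711a4}) and (\ref{cor120711a4}) $\Rightarrow$ (\ref{cor120711a1}) $\Rightarrow$ (v) of Theorem~\ref{cor120711a}, applied to $S$, translate directly into (ii) $\Rightarrow$ (iii) and (iii) $\Rightarrow$ (iv) of the present statement. Note that for (iv) to make sense as stated we need a dualizing complex $D^S$ over $S$, which exists because $S$ is complete (or more generally $F$-finite), so the ``(v)''-clause of Theorem~\ref{cor120711a} is actually available.

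For the closing step (iv) $\Rightarrow$ (i), the $S$ at hand, together with Theorem~\ref{cor120711a}(v) $\Rightarrow$ (i) applied over $S$, yields that $\Lotimes SC$ is dualizing for $S$. The remaining task is faithfully flat descent: since $R\to S$ is faithfully flat and weakly \'etale, finite injective dimension and the semidualizing homothety condition for $C$ can be checked after tensoring with $S$, so $C$ is dualizing over $R$. This descent step is where I expect the main obstacle to sit, since ``dualizing'' bundles together finiteness of injective dimension and a derived isomorphism $R \xra{\simeq} \Rhom CC$; both must be controlled under $-\Lotimes R S$, and the weakly \'etale hypothesis (rather than mere faithful flatness) is what allows one to transport the finite-injective-dimension condition back down without losing control of the fibers. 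Everything else is a direct translation between Theorem~\ref{cor120711a} over $S$ and the four conditions over $R$.
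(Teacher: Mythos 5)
Your cycle (i)$\Rightarrow$(ii)$\Rightarrow$(iii)$\Rightarrow$(iv)$\Rightarrow$(i) is sound and rests on the same key ingredients the paper uses: existence of a complete weakly \'etale $F$-finite local $R$-algebra $S$ (this is Lemma~\ref{Factorization of Frobenius}, an explicit Cohen-structure-theorem construction with $K$ an algebraic closure of the residue field, not a gamma construction; note it is in Section~\ref{sec120515a}, not Section~\ref{sec121004a}), Theorem~\ref{cor120711a} applied over $S$ to $\Lotimes SC$ and the module-finite contracting endomorphism $f_S$, Proposition~\ref{gcdimf=gcdimM} to translate between $\gkdim{\Lotimes SC}f_S^n$ and $\gkdim{\Lotimes SC}_S({}^n\!S)$, and Fact~\ref{semi-dual complex base change} for ascent and descent. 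In particular the descent step you flag as ``the main obstacle'' is already a one-line consequence of Fact~\ref{semi-dual complex base change} (Avramov--Foxby): for a flat local map, $\Lotimes SC$ is dualizing for $S$ if and only if $C$ is dualizing for $R$ \emph{and} $S/\fm S$ is Gorenstein, and weak \'etaleness makes $S/\fm S$ a field, so there is no separate work to do on injective dimension or the homothety morphism. The genuine difference from the paper is organizational. The paper proves the larger Theorem~\ref{thm120712a'}, pairing each unprimed condition over $\comp R$ with a primed condition over $S$ and using the factorized-pushout machinery of Section~\ref{sec120830a} (Lemmas~\ref{lem120615az} and~\ref{lem120725a}) to transport the $(\Lotimes{\comp R}{C})(\comp f_R^{\,n})$-type conditions across $R\to S$, before chaining the unprimed conditions by Theorem~\ref{thm120615a}. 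You instead stay entirely at the $S$-level for the middle implications; this works precisely because the four conditions of Theorem~\ref{prop120910a} are exactly those in Theorem~\ref{thm120712a'} that do not mention the $(-)(\vf^n)$-construction, so the factorized-pushout lemmas can be bypassed. The tradeoff is that your route does not recover the extra equivalent conditions of Theorem~\ref{thm120712a'}.
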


It is worth noting that one of the focuses of this paper involves developing a similar method for
reducing to the module-finite situation for other contracting endomorphisms.

We conclude this section by summarizing the contents of the paper.
Section~\ref{sec121004a} contains terminology and background content.
Section~\ref{sec120830a} consists of analyses of a construction like $\Rhom{^{n}\!R}C$
that is better suited for endomorphisms that are not module-finite.
In Section~\ref{sec120613a} we prove results including Theorem~\ref{cor120711a} above about general contracting
endomorphisms, and in
Section~\ref{sec120515a} we focus briefly on the Frobenius endomorphism.
Finally, Appendix~\ref{sec120627a} contains a somewhat general construction of module-finite contracting endomorphisms.

\section{Semidualizing complexes and $\gc$-dimension} \label{sec121004a}

In this section, we recall  definitions and background material on
semidualizing complexes and related notions. We begin by specifying our notation for 
complexes and derived categories.
The reader may find~\cite{gelfand:moha,hartshorne:rad,verdier:cd, verdier:1} to be useful for
more background.

\begin{para}
In this paper, $R$-complexes are indexed homologically
$$
M=\cdots\xra{\partial^M_{i+1}}
M_{i}\xra{\partial^M_{i}} M_{i-1}\xra{\partial^M_{i-1}}
\cdots.
$$
For each integer $i$,
the $i$th \emph{suspension} (or \emph{shift}) of
$M$, denoted $\shift^i M$, is the complex with
$(\shift^i M)_n=M_{n-i}$ and
$\partial_n^{\shift^i M}=(-1)^i\partial_{n-i}^M$.

The derived category of the category of $R$-modules is denoted $\catd(R)$.
Isomorphisms in $\mathcal{D}(R)$ are identified by the
symbol $\simeq$ and isomorphisms up to shift
are designated by $\sim$.

Fix $R$-complexes $M$ and $N$. Let
$\inf(M)$ and $\sup(M)$ denote the infimum and supremum,
respectively, of the set
$\{n\in\mathbb{Z}\mid\HH_n(M)\neq 0\}$, with the conventions $\sup(\emptyset)=-\infty$ and $\inf(\emptyset)=\infty$.
The complex $M$ is \emph{homologically bounded} if $\HH_i(M)=0$
for all $|i|\gg 0$; it is \emph{degree-wise homologically finite} if each $\HH_i(M)$ is finitely generated; and
it is \emph{homologically finite}
if $\oplus_i\HH_i(M)$  is finitely generated.
If $M$ is degree-wise homologically finite and $\inf(M)\geq -\infty$, then $M$ admits a 
\emph{degree-wise finite free resolution}, that is, an isomorphism $F\xra\simeq M$ in $\catd(R)$ such that
each $F_i$ is a finitely generated free $R$-module and $F_i=0$ for $i<\inf(M)$.

Let $M\otimes_R^{\textbf{L}} N$ and $\Rhom MN$ denote the left-derived
tensor product and right-derived homomorphism complexes.
Let $\pd_R(M)$, $\fd_R(M)$, and $\id_R(M)$ denote the projective, flat, and injective
dimensions of $M$, as in \cite{avramov:hdouc}.
A ring homomorphism $R\to S$ has \emph{finite flat dimension} if $\fd_R(S)$ is finite.
When $R$ is a local ring with residue field $k$, the \emph{depth} of
$M$ is $\depth_R(M):=-\sup(\Rhom kM)$.
\end{para}

The ideas behind semidualizing complexes go back, e.g. to
Grothendieck's dualizing complexes~\cite{hartshorne:rad} and the relative dualizing complexes of Avramov and Foxby~\cite{avramov:rhafgd}.\footnote{The 
history summarized in this section is skeletal at best. For a more thorough discussion, the interested reader may find~\cite{sather:bnsc} helpful.}
The generality that we work in for this paper is from Christensen~\cite{christensen:scatac}.

\begin{defn}\label{def semi-dual complex}
An $R$-complex $C$ is \emph{semidualizing} if
it is homologically finite
and the ``homothety morphism'' $\chi^R_C\colon R\to\Rhom CC$ is an isomorphism
in $\mathcal{D}(R)$.
An $R$-complex $D$ is \emph{dualizing} if it is semidualizing and
has finite injective dimension.
\end{defn}

\begin{fact}\label{admitting dualizing complex}
If $R$ is a homomorphic image of a Gorenstein ring, e.g., if
$R$ is complete, then $R$ admits a
dualizing complex by \cite[\S V.10]{hartshorne:rad}.
\end{fact}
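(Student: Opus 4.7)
The plan is to reconstruct the two classical ingredients hidden behind the reference to Hartshorne. First, I would handle the parenthetical (``e.g., if $R$ is complete'') by invoking Cohen's structure theorem: any complete local ring $(R,\m,k)$ is a quotient of a regular local ring (a power series ring over $k$ in the equal-characteristic case, or over a complete DVR in the mixed characteristic case), and a regular local ring is Gorenstein. So it suffices to treat the case $R = Q/I$ with $Q$ Gorenstein.

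Writing $D := \Rhom[Q]{R}{Q}$, my goal is to show $D$ is a dualizing $R$-complex, which amounts to verifying the three defining properties. For homological finiteness over $R$, since $R$ is module-finite over $Q$ it admits a degree-wise finite free $Q$-resolution, and applying $\Hom[Q]{-}{Q}$ yields a bounded complex of finitely generated free $Q$-modules whose homology is annihilated by $I$, hence is finitely generated over $R$. For finite injective dimension over $R$, start with a bounded injective resolution $Q \xra{\simeq} J^\bullet$ over $Q$ (which exists because $Q$ is Gorenstein); then $D \simeq \Hom[Q]{R}{J^\bullet}$, and Hom--tensor adjunction shows that $\Hom[Q]{R}{-}$ carries $Q$-injectives to $R$-injectives, giving a finite $R$-injective resolution of $D$. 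For the homothety isomorphism, adjunction gives
\[
\Rhom{D}{D} \simeq \Rhom[Q]{\Rhom[Q]{R}{Q}}{Q}.
\]

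The main obstacle is the biduality step: identifying $\Rhom[Q]{\Rhom[Q]{R}{Q}}{Q}$ with $R$. This is the assertion that $R$ is derived $Q$-reflexive with respect to the semidualizing $Q$-complex $Q$ itself, which holds because every finitely generated module over a Gorenstein ring has finite classical G-dimension (Auslander--Bridger). This is the deepest input, but it is already packaged into the $\gc$-dimension machinery recalled in Section~\ref{sec121004a}. Once biduality is in hand, the homothety map $R \to \Rhom{D}{D}$ is traced through the adjunction to match the canonical biduality map for $R$ over $Q$, completing the verification.
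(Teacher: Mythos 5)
The paper does not give its own proof of this Fact; it is cited directly from Hartshorne's \emph{Residues and Duality}, \S V.10, so there is no in-paper argument to compare against. Your reconstruction of the classical proof is essentially correct, but there is one small slip in the homological-finiteness step. You assert that applying $\Hom[Q]{-}{Q}$ to a degree-wise finite free $Q$-resolution $F\to R$ yields a \emph{bounded} complex of finitely generated free $Q$-modules. In general this is false: $\pd_Q(R)$ is typically infinite (take $Q$ Gorenstein but not regular), so $F$ and hence its $Q$-dual are unbounded complexes. What is true is that the homology $\Ext[Q]{i}{R}{Q}$ vanishes for $i>\id_Q(Q)$, and $\id_Q(Q)<\infty$ is exactly the Gorenstein hypothesis (in the local, or finite-Krull-dimension, case). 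So $D$ is homologically bounded with finitely generated homology, which is what is actually needed; the intermediate claim about boundedness of the complex itself is overstated. You can avoid the slip entirely by computing $D\simeq\Hom[Q]{R}{J}$ from a bounded $Q$-injective resolution $J$ of $Q$, which is the same complex you invoke anyway for the finite-injective-dimension step and makes boundedness and finite generation of homology immediate.

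The remaining steps---that $\Hom[Q]{R}{-}$ carries $Q$-injective modules to $R$-injective modules, the adjunction identifying $\Rhom{D}{D}$ with $\Rhom[Q]{\Rhom[Q]{R}{Q}}{Q}$, and the biduality $\Rhom[Q]{\Rhom[Q]{R}{Q}}{Q}\simeq R$ coming from the fact that every homologically finite $Q$-complex is derived $Q$-reflexive when $Q$ is Gorenstein (equivalently, $Q$ is dualizing over itself)---are all correct and constitute the standard argument. One further minor point: the paper's convention is that ``complete'' means complete with respect to the Jacobson radical, so a complete noetherian ring is a finite product of complete local rings rather than necessarily local; Cohen's structure theorem applies factor by factor, and the resulting product of regular local rings is regular, so your reduction extends routinely.
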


\begin{fact}\label{semi-dual complex base change}
Let $\varphi\colon (R,\fm)\to (S,\fn)$ be a local ring homomorphism
of finite flat dimension, and let $M$ be a
homologically finite $R$-complex.
From \cite[(5.7) Proposition]{christensen:scatac} and \cite[Theorem 4.5]{frankild:rrhffd}
we know that $S\otimes_R^{\textbf{L}} M$ is semidualizing for $S$
if and only if $M$ is semidualizing for $R$.
When $\vf$ is flat, the complex $S\otimes_R^{\textbf{L}} M$ is dualizing for $S$
if and only if $M$ is dualizing for $R$ and $S/\m S$ is Gorenstein
by \cite[(4.2) Proposition, (5.1) Theorem]{avramov:lgh}.\footnote{This can be done more generally using Gorenstein homomorphisms, but we do not need that
level of generality here; see~\cite{avramov:lgh}.}
\end{fact}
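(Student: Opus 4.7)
The plan is to split the assertion into its semidualizing half and its dualizing half, reducing each to a base-change isomorphism plus a descent principle.

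For the semidualizing equivalence, the key tool is the tensor-evaluation morphism
\[
\omega\colon S\lotimes_R\Rhom{M}{M}\longrightarrow\Rhom[S]{S\lotimes_R M}{S\lotimes_R M}
\]
in $\catd(S)$, which is an isomorphism whenever $\fd_R S<\infty$ and $M$ is homologically finite; this can be verified by representing $S$ by a bounded flat resolution over $R$ and $M$ by a degree-wise finite free resolution, then checking the comparison termwise. Under $\omega$ the homothety $\chi^S_{S\lotimes_R M}$ is identified with $S\lotimes_R\chi^R_M$. Homological finiteness of $S\lotimes_R M$ over $S$ is automatic from $\fd_R S<\infty$ and homological finiteness of $M$, so the ``if'' direction is immediate. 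The ``only if'' direction requires descending the isomorphism $S\lotimes_R\chi^R_M$ to an isomorphism $\chi^R_M$, which uses that for a local map of finite flat dimension the functor $S\lotimes_R(-)$ is conservative on suitable subcategories, ultimately by reducing modulo maximal ideals through the factorization $k\to k\otimes_R S\to k_S$ and applying Nakayama. This is precisely the content of \cite[Theorem 4.5]{frankild:rrhffd}.

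For the dualizing equivalence under a flat local $\vf$, I would combine the semidualizing statement just obtained with the standard ascent-descent behavior of finite injective dimension: for flat $\vf$ and homologically finite $M$,
\[
\id_S(S\lotimes_R M)<\infty\iff\id_R M<\infty\text{ and }S/\fm S\text{ is Gorenstein}.
\]
The forward implication is \cite[(4.2) Proposition]{avramov:lgh} and the reverse is \cite[(5.1) Theorem]{avramov:lgh}; both rest on Foxby's formulas comparing Bass numbers of $M$, its base change, and the closed fiber. The main obstacle in the whole proof is the descent half of the semidualizing equivalence, since $\Rhom{M}{M}$ need not be homologically bounded, so a direct Nakayama argument on the cone of $\chi^R_M$ does not apply and one is forced into the more delicate reflexivity-based analysis of \cite{frankild:rrhffd}. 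Granted that input, the rest of the proof is a diagram chase plus injective-dimension bookkeeping.
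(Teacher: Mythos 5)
The paper records this as a Fact and defers entirely to the cited results, and your sketch faithfully reconstructs the proofs behind those citations: tensor evaluation (valid since $\fd_R S<\infty$ and $M$ is homologically finite) identifies $\chi^S_{S\lotimes_R M}$ with $S\lotimes_R\chi^R_M$, the descent half is exactly~\cite[Theorem 4.5]{frankild:rrhffd}, and the injective-dimension bookkeeping via Bass series and the Gorenstein closed fiber is~\cite{avramov:lgh}. Your remark that the descent step is the genuine obstacle---because $\Rhom{M}{M}$ is not a priori homologically bounded, so a naive Nakayama argument on the cone of $\chi^R_M$ fails---is precisely why the Fact leans on Frankild--Sather-Wagstaff rather than a formal diagram chase, so your proposal is correct and matches the paper's intent.
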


The next categories
come from Avramov and Foxby~\cite{avramov:rhafgd} and Christensen~\cite{christensen:scatac}.

\begin{defn}\label{Auslander class}
Let $C$ be a semidualizing $R$-complex.
The \emph{Auslander class} with respect
to $C$ is the full subcategory
$\mathcal{A}_C(R)\subseteq\mathcal{D}(R)$
consisting of the homologically bounded $R$-complexes $M$ such that
$C\otimes^{\textbf{L}}_R M$ is homologically bounded
and the natural morphism $\gamma^C_M\colon  M\to
\Rhom C{C\otimes^{\textbf{L}}_R M}$
is an isomorphism in $\mathcal{D}(R)$.
The \emph{Bass class} with respect
to $C$ is the full subcategory
$\mathcal{B}_C(R)\subseteq\mathcal{D}(R)$
consisting of the homologically bounded $R$-complexes $M$ such that
$\Rhom CM$ is homologically bounded
and the natural morphism $\xi^C_M\colon  \Lotimes{C}{\Rhom CM}\to M$
is an isomorphism in $\mathcal{D}(R)$.
\end{defn}

\begin{defn}\label{C-ref complex}
Let $C$  and $M$ be $R$-complexes. We set $M^{\dag_C}:=
\Rhom MC$ and $M^{\dag_C\dag_C}:=(M^{\dag_C})^{\dag_C}$.
The $R$-complex $M$ is
\emph{derived $C$-reflexive} when the complexes $M$ and $M^{\dag_C}$ are homologically finite and
the ``biduality morphism''
$\delta^C_M\colon  M\to M^{\dag_C\dag_C}$  is an
isomorphism in $\mathcal{D}(R)$. \footnote{Avramov,  Iyengar, and Lipman~\cite[Theorem 2]{avramov:rrc1} show that this definition is redundant when $C$ is semidualizing.}
\end{defn}

\begin{defn}\label{def of gcdim for complexes}
Let $C$ be a semidualizing $R$-complex. Set
$$
\gcdim_R(M):=\begin{cases}
\inf(C)-\inf(\Rhom MC) & \text{if $M$ is derived $C$-reflexive} \\
\infty & \text{otherwise.}
\end{cases}
$$
When $C=R$ we write $\gdim_R(M)$ in place of $\gcdim_R(M)$;
this is the \emph{G-dimension} of
Auslander and Bridger~\cite{auslander:smt}
and Yassemi~\cite{yassemi:gd}.
\end{defn}

\begin{fact}[\protect{\cite[(3.14) Theorem]{christensen:scatac}}]\label{ABE formula}
Let $C$ be a semidualizing $R$-complex, and let $M$ be an $R$-complex
such that $\gcdim_R(M)<\infty$. Then
$$
\gcdim_R(M)=\depth(R)-\depth_R(M).
$$
\end{fact}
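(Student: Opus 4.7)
The plan is to compute $\depth_R(M)$ directly in terms of $\inf(M^{\dag_C})$ using the derived $C$-reflexivity of $M$, and then specialize the resulting formula to $M=R$ to pin down $\depth_R(C)$.

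Since $M$ is derived $C$-reflexive, there is an isomorphism $M \simeq \Rhom{M^{\dag_C}}{C}$ in $\catd(R)$. Applying $\Rhom{k}{-}$ and using Hom–tensor adjunction produces
\[
\Rhom{k}{M} \simeq \Rhom{\Lotimes{k}{M^{\dag_C}}}{C}.
\]
Because $M^{\dag_C}$ is homologically finite (which is part of being derived $C$-reflexive), each Betti number $\beta_i := \rank_k \HH_i(\Lotimes{k}{M^{\dag_C}})$ is finite, and Nakayama's lemma for complexes gives $\inf\{i : \beta_i \neq 0\} = \inf(M^{\dag_C})$. Viewing the complex of $k$-vector spaces $\Lotimes{k}{M^{\dag_C}}$ as $\bigoplus_i \shift^i k^{\beta_i}$, the right-hand side above becomes a product $\prod_i \shift^{-i}(\Rhom{k}{C})^{\beta_i}$ whose supremum is controlled by the smallest index $i$ with $\beta_i \neq 0$. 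Assembling these observations yields
\[
\depth_R(M) \;=\; -\sup \Rhom{k}{M} \;=\; \inf(M^{\dag_C}) + \depth_R(C).
\]

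Specializing to $M = R$, which is derived $C$-reflexive via the homothety isomorphism $R \res \Rhom{C}{C}$ with $R^{\dag_C} \simeq C$, gives $\depth(R) = \inf(C) + \depth_R(C)$, i.e., $\depth_R(C) = \depth(R) - \inf(C)$. Substituting back into the previous display,
\[
\depth(R) - \depth_R(M) \;=\; \inf(C) - \inf(M^{\dag_C}) \;=\; \gcdim_R(M),
\]
as required. The main technical subtlety is verifying that the supremum of the possibly infinite product $\prod_i \shift^{-i}(\Rhom{k}{C})^{\beta_i}$ is attained at the smallest $i$ with $\beta_i \neq 0$; this holds because $\sup\Rhom{k}{C} = -\depth_R(C)$ is finite (from the homological finiteness of $C$) while the shifts $-i$ decrease as $i$ grows, so contributions from large $i$ fall out of range.
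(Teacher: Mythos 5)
This statement is imported into the paper as a Fact cited from Christensen's 2001 paper, so there is no in-paper proof to compare against; the natural benchmark is Christensen's proof of his Theorem (3.14), which proceeds through his machinery of standard morphisms and the auxiliary lemmas on $\gcdim$-resolutions. Your argument is a more elementary, self-contained route, and it is correct. The chain is: biduality gives $M\simeq\Rhom{M^{\dagger_C}}{C}$; Hom--tensor adjunction then gives $\Rhom kM\simeq\Rhom{\Lotimes{k}{M^{\dagger_C}}}{C}$; since $M^{\dagger_C}$ is homologically finite and bounded below, $\Lotimes{k}{M^{\dagger_C}}$ is a bounded-below complex of $k$-vector spaces, quasi-isomorphic to $\bigoplus_i\shift^i k^{\beta_i}$, and Nakayama for complexes identifies the smallest index with nonzero $\beta_i$ as $\inf(M^{\dagger_C})$; $\Rhom{-}{C}$ turns the coproduct into the product $\prod_i\shift^{-i}\Rhom kC^{\beta_i}$, and since products of $R$-modules are exact the homology of this product is the product of the homologies, so there is no cancellation and the supremum is read off from the smallest relevant $i$. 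The finiteness of $\sup\Rhom kC$ (equivalently $\depth_RC>-\infty$) needed to control the supremum of the infinite product follows from $C$ being a nonzero bounded complex, as you note. Specializing to $M=R$ correctly recovers $\depth_RC=\depth R-\inf C$ because the biduality morphism $\delta^C_R$ coincides with the homothety $\chi^R_C$, which is an isomorphism precisely because $C$ is semidualizing. Your approach is essentially the Ext-degree-shift proof of the Auslander--Buchsbaum/Auslander--Bridger equality adapted to $\gcdim$; what it buys over Christensen's treatment is that it avoids appealing to his intermediate structural lemmas, at the modest cost of needing the (routine) facts about infinite products in the derived category and Nakayama for complexes, both of which you invoke appropriately.
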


\begin{fact}\label{C-RHom(C,D)}
Assume that $R$ has a dualizing
complex $D$.  Each homologically finite $R$-complex $M$ is
derived $D$-reflexive by~\cite[Proposition V.2.1]{hartshorne:rad} or~\cite[(8.4) Proposition]{christensen:scatac}.
Furthermore, for each semidualizing $R$-complex $C$, the complex $C^{\dagger_D}$ is also semidualizing;
see~\cite[(2.11)  Theorem]{christensen:scatac}.
\end{fact}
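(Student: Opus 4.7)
The plan is to handle the two assertions in order: the derived $D$-reflexivity of every homologically finite complex is the substantive claim, and the semidualizing property of $C^{\dag_D}$ will then follow formally. For the reflexivity, fix a homologically finite $R$-complex $M$. I would choose a degree-wise finite free resolution $F\xra{\simeq}M$ (available because $M$ is degree-wise homologically finite with $\inf(M)>-\infty$) and a bounded injective resolution $D\xra{\simeq}I$ (available because $\id_R(D)<\infty$). The complex $\Hom{F}{I}$ then represents $\Rhom{M}{D}$, and termwise inspection---using boundedness of $I$ and the finite rank of each $F_i$---shows it is homologically bounded with degree-wise finitely generated homology, so $\Rhom{M}{D}$ is homologically finite. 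For the biduality morphism $\delta^D_M$, I would proceed in three stages: first observe that $\delta^D_R$ agrees with the homothety $\chi^R_D\colon R\to\Rhom{D}{D}$, an isomorphism because $D$ is semidualizing; next extend by additivity, shifts, and the triangulated five-lemma to all bounded complexes of finitely generated free modules; and finally bridge to the general case by a devissage exploiting $\id_R(D)<\infty$, which ensures that only a bounded window of $F$ contributes to both $\Rhom{M}{D}$ and its double, so that a truncation-and-induction argument reduces matters to the bounded-free case already handled.

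For the second assertion---that $C^{\dag_D}$ is semidualizing---homological finiteness of $C^{\dag_D}$ is just the first assertion applied to $M=C$. For the homothety isomorphism $\chi^R_{C^{\dag_D}}\colon R\to\Rhom{C^{\dag_D}}{C^{\dag_D}}$, I would assemble the chain of natural isomorphisms
\begin{align*}
\Rhom{C^{\dag_D}}{C^{\dag_D}}
  &\simeq \Rhom{C}{\Rhom{C^{\dag_D}}{D}} \\
  &= \Rhom{C}{C^{\dag_D\dag_D}} \\
  &\simeq \Rhom{C}{C} \simeq R,
\end{align*}
where the first isomorphism is Hom-swap (from tensor-Hom adjunction together with commutativity of $\otimes^{\mathbf{L}}_R$), the third uses derived $D$-reflexivity of $C$ established in the first assertion, and the last is the semidualizing property of $C$. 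A routine naturality check then identifies the composite with $\chi^R_{C^{\dag_D}}$.

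The main obstacle I anticipate is the final stage of the reflexivity step: passing from bounded complexes of finitely generated free modules (where the double dual is straightforward by additivity) to an arbitrary homologically finite $M$. This is precisely where the hypothesis $\id_R(D)<\infty$ does its work---it is what lets truncations of the (possibly unbounded) degree-wise finite free resolution $F$ remain adequate for computing both $\Rhom{M}{D}$ and its double, and it is the feature of a dualizing complex that a generic semidualizing complex lacks. Everything following the reflexivity step is formal manipulation.
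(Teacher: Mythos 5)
The paper records this as a Fact with citations to Hartshorne and Christensen rather than giving its own proof, so there is no in-house argument to compare against; measured against those sources, your overall plan is the right one. Settling biduality for $M=R$ via the homothety $\chi^R_D$, extending by shifts, finite sums and the five-lemma to bounded complexes of finitely generated free modules, and then passing to arbitrary homologically finite $M$ by a way-out devissage powered by $\id_R(D)<\infty$ is exactly the Hartshorne/Christensen argument. Your derivation of the semidualizing property of $C^{\dagger_D}$ by Hom-swap, derived $D$-reflexivity of $C$, and $\Rhom CC\simeq R$ is also correct and is essentially Christensen's proof of (2.11).

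There is, however, one genuine gap in the homological-finiteness step. Termwise inspection of $\Hom{F}{I}$ cannot show that $\Rhom{M}{D}$ has degree-wise finitely generated homology: each nonzero term of $\Hom{F}{I}$ is a finite direct sum of injective $R$-modules, and these are not finitely generated unless $R$ is artinian, so finite generation of the homology is simply not visible from those terms. To repair this, either replace $I$ by a bounded complex $N$ of finitely generated modules quasi-isomorphic to $D$ (take a degree-wise finite free resolution of $D$ and soft-truncate at $\sup(D)$); then for each $n$ the term $\Hom{F}{N}_n$ is a finite direct sum of the finitely generated modules $\Hom{F_i}{N_j}$, and this together with the homological boundedness you obtain from $\id_R(D)<\infty$ gives homological finiteness. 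Alternatively, invoke the standard spectral-sequence or devissage argument showing that $\Ext{i}{M}{D}$ is finitely generated whenever $M$ and $D$ are homologically finite over a noetherian ring. With either repair the rest of your argument goes through as written.
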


\begin{defn}
Let $R$ be local with residue field $k$, and let $M$ be a homologically finite
$R$-complex. The \emph{Poincar\'{e}} and \emph{Bass series} of $M$ are the formal Laurant series
\begin{align*}
P_M^R(t)&:=\sum_{i\in \mathbb{Z}}\dim_k(\Tor iMk)t^i &&\text{and}
&I^M_R(t)&:=\sum_{i\in \mathbb{Z}}\dim_k(\Ext ikM)t^i.
\end{align*}
\end{defn}

\begin{fact}\label{poincare and bass}
For a semidualizing $R$-complex $C$ we have from~\cite[Theorem 4.1(a)]{foxby:ibcahtm}:
$$
I^R_R(t)=I_R^{\Rhom CC}(t)=P_C^R(t)I_R^C(t).
$$
\end{fact}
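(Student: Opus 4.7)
The plan is to establish the two equalities in the chain $I^R_R(t)=I_R^{\Rhom CC}(t)=P_C^R(t)I_R^C(t)$ separately. The first equality follows immediately from the semidualizing hypothesis: the homothety morphism $\chi^R_C\colon R\to\Rhom CC$ is an isomorphism in $\catd(R)$, so applying $\Rhom k{-}$ and taking homology yields $\Ext ikR\cong\Ext ik{\Rhom CC}$ for every integer $i$, whence the two Bass series agree term by term.

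For the second equality I would combine derived tensor--Hom adjunction with formality of $\Lotimes Ck$. The adjunction produces a natural isomorphism
$$
\Rhom k{\Rhom CC}\simeq \Rhom{\Lotimes Ck}{C}
$$
in $\catd(R)$. The homology of $\Lotimes Ck$ consists of the Tor modules $\Tor jCk$, each of which is a $k$-vector space of dimension $\beta_j:=\dim_k\Tor jCk$, the coefficients of $P_C^R(t)$. I would then invoke the fact that $\Lotimes Ck$ is formal in $\catd(R)$ and decomposes as $\bigoplus_j \shift^j k^{(\beta_j)}$. Substituting this decomposition into the right-hand side above and using that $\Rhom{-}C$ sends coproducts to products and commutes with shifts up to sign, one obtains
$$
\Rhom k{\Rhom CC}\simeq \prod_j \shift^{-j}(\Rhom kC)^{(\beta_j)}.
$$
Taking $\HH_{-i}$ and then $\dim_k$ of both sides identifies $\dim_k\Ext ik{\Rhom CC}$ with $\sum_j\beta_j\dim_k\Ext{i-j}kC$, which is precisely the coefficient of $t^i$ in the formal Laurent series product $P_C^R(t)\cdot I_R^C(t)$. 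Both series are bounded below (the Poincar\'e series by $\inf(C)$ and the Bass series by $\depth_R(C)$), so the product is well-defined, and summing over $i$ finishes the identity.

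The main delicate point is justifying the formality of $\Lotimes Ck$ as an object of $\catd(R)$, not merely of $\catd(k)$. This holds because any complex of $k$-vector spaces is isomorphic to the coproduct of shifts of its homology in $\catd(k)$ (since $k$ is semisimple), and this decomposition transports back to $\catd(R)$ via restriction of scalars along $R\twoheadrightarrow k$, using that each homology module is $\fm$-torsion. A secondary technical point is the passage from the formal isomorphism in $\catd(R)$ to an equality of Laurent series: one must verify that $\HH_{-i}$ commutes with the (possibly infinite) product appearing on the right, which is straightforward since for each fixed $i$ only finitely many factors contribute by the boundedness of $\Rhom kC$ in each degree.
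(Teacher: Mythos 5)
The paper does not prove this statement; it is labeled a \emph{Fact} and cited directly from Foxby~\cite[Theorem~4.1(a)]{foxby:ibcahtm}. Your proof is a correct, self-contained reconstruction of the Foxby-type computation, and the two ingredients you isolate are exactly the standard ones: the first equality $I^R_R(t)=I_R^{\Rhom CC}(t)$ is immediate from the homothety isomorphism, and the second is the Bass-series formula for $\Rhom CC$ obtained via the adjunction $\Rhom k{\Rhom CC}\simeq\Rhom{\Lotimes Ck}{C}$ together with formality of $\Lotimes Ck$ over $R$ (because its homology is $k$-vector-space valued) and the resulting decomposition $\Lotimes Ck\simeq\bigoplus_j\shift^j k^{(\beta_j)}$. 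Your bookkeeping of degrees and of the boundedness of $P_C^R$ (below by $\inf C$) and $I^C_R$ (below by $\depth_R C$) is right, so the Cauchy product is well-defined and matches coefficientwise. One remark: the worry about $\HH_{-i}$ commuting with the product is harmless even before your finiteness observation, since products are exact in the category of $R$-modules and hence homology commutes with products of complexes on the nose; the finiteness argument you give then guarantees each coefficient is a finite sum, which is the actual thing needed for the Laurent-series identity.
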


Our next topic is from Avramov, Foxby, and Herzog~\cite{avramov:solh}.

\begin{defn}\label{weakly reg}
Let $\vf\colon (R,\m)\to (S,\n)$ be a local ring homomorphism.
The \emph{semi-completion} of $\vf$ is the composition $\grave{\varphi}\colon R\to \comp S$ of 
$\vf$ and the inclusion $S\to \comp S$.

The map $\vf$ is said to be \emph{weakly regular}
if it is flat with regular closed fibre.
If $\vf$ is flat, we define the \emph{depth} and \emph{embedding dimension} of $\vf$ to be
$\depth(\vf):=\depth(S/\m S)$ and $\edim(\vf):=\edim(S/\m S)$.
If $\vf$ is weakly regular of embedding dimension 0, we say that $\vf$ is \emph{weakly \'etale}
or that $S$ is a \emph{weakly \'etale $R$-algebra}.

A \emph{regular} (resp. \emph{Gorenstein}) \emph{factorization} of $\vf$
is a diagram of local homomorphisms
$R\xra{\dot{\varphi}} R'\xra{\varphi'} S$
where $\varphi=\varphi'\dot{\varphi}$, $\dot{\varphi}$ is flat,
$R'/\fm R'$ is regular (resp. Gorenstein), and $\varphi'$ is surjective.
By \cite[(1.1) Theorem]{avramov:solh}, the semi-completion $\grave{\varphi}$
admits a regular factorization
$R\to R'\to \widehat{S}$
such that $R'$ is complete; this is called a
\emph{Cohen factorization of} $\grave{\varphi}$.

Given a regular factorization $R\xra{\dot{\varphi}} R'\xra{\varphi'}S$   for
$\varphi$, it is straightforward to show that
$\edim(\vf)\geq \edim(S/\fm S)$; this factorization is  \emph{minimal} if $\edim(\vf)= \edim(S/\fm S)$.
\end{defn}

The focus of this paper is on $\gc$-dimension of local homomorphisms, though we do require the following
slightly greater generality for a few results.
See~\cite{sather:cidfc}.

\begin{defn}\label{gcdim(f)}
Let $\varphi\colon R\to S$ be a local ring homomorphism
and $M$ a homologically finite $S$-complex.
Fix a semidualizing $R$-complex $C$ and a Cohen
factorization
$R\xra{\Dot{\varphi}} R'\xra{\varphi'}\widehat{S}$
of the semi-completion $\grave{\varphi}$.
The \emph{$\text{G}_C$-dimension of $M$
over $\varphi$} is 
$$
\gcdim_{\varphi}(M):=
\gkdim{R'\otimes_R^{\textbf{L}} C}_{R'}(\widehat{S}\otimes_S^{\textbf{L}} M)-\edim(\Dot{\varphi}).
$$
The \emph{$\text{G}_C$-dimension of $\varphi$} is
$\gcdim(\varphi):=\gcdim_{\varphi}(S)$. In the case $C=R$, we
follow \cite{iyengar:golh} and set
$\gdim_{\varphi}(M):=\gkdim{R}_{\varphi}(M)$ and $\gdim(\varphi):=\gkdim{R}(\varphi)$.
\end{defn}

\begin{fact}\label{1}
Let $\vf\colon R\to S$ be a local  homomorphism,   $C$  a semidualizing $R$-complex, and  $M$  a homologically finite $S$-complex.
\begin{enumerate}[(a)]
\item\label{item120924a}
The quantities
$\gcdim_{\varphi}(M)$, $\gcdim_{\grave{\varphi}}(\widehat{S}\otimes_S^{\textbf{L}} M)$, and
$\gkdim{\widehat{R}\otimes_R^{\textbf{L}}C}_{\widehat{\varphi}}(\widehat{S}\otimes_S^{\textbf{L}} M)$
are simultaneously finite, by an argument as in~\cite[3.4.1]{iyengar:golh}. 
\item\label{item120924b}
If $\vf$ admits a Gorenstein factorization $R\xra{\dot\vf} R'\xra{\vf'} S$, then
$\gcdim_{\vf}(M)=\gkdim{\Lotimes{R'}{C}}_{R'}(M)-\depth(\dot\vf)$, as in~\cite[3.8.~Proposition]{iyengar:golh}.
\item\label{item120924c}
If $R$ admits a dualizing complex $D$, then
$\gcdim_{\varphi}(M)<\infty$ if and only if
$M$ is in $\mathcal{A}_{C^{\dag_D}}(R)$, by~\cite[2.2.3]{sather:cidfc}.
\end{enumerate}
\end{fact}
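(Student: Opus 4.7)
The plan is to reduce each of (a), (b), (c) to a computation over a Cohen factorization of the semi-completion $\grave\vf$, and then apply the standard transfer and characterization results. A standing preliminary observation is that $\gcdim_\vf(M)$ is independent of the choice of Cohen factorization, so we are free to pick one adapted to the problem.

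For part (a), fix a Cohen factorization $R\xra{\dot\vf}R'\xra{\vf'}\comp S$ of $\grave\vf$. Since $\comp S$ is already complete we have $\grave{\grave\vf}=\grave\vf$, so the same factorization computes $\gcdim_{\grave\vf}(\comp S\otimes^{\mathbf{L}}_S M)$, and the identification $\comp S\otimes^{\mathbf{L}}_{\comp S}(\comp S\otimes^{\mathbf{L}}_S M)\simeq\comp S\otimes^{\mathbf{L}}_S M$ makes the first two quantities coincide. For the third quantity, complete $R'$ along $\fm R'$ to produce $R'^{\wedge}:=\comp R\otimes_R R'$; this is flat over $\comp R$ with the same (regular) closed fiber as $\dot\vf$ and complete, and the induced map $R'^{\wedge}\to\comp S$ is surjective. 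Thus $\comp R\to R'^{\wedge}\to\comp S$ is a Cohen factorization of $\comp\vf$ with $\edim$ of the flat part equal to $\edim(\dot\vf)$. Faithfully flat ascent/descent of $\gkdim{(-)}$-dimension along the completion $R'\to R'^{\wedge}$, combined with the base-change identity $R'^{\wedge}\otimes^{\mathbf{L}}_R C\simeq R'^{\wedge}\otimes^{\mathbf{L}}_{R'}(R'\otimes^{\mathbf{L}}_R C)$, gives the simultaneous finiteness.

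For part (b), complete the given Gorenstein factorization $R\xra{\dot\vf}R'\xra{\vf'}S$ to obtain a Gorenstein factorization $R\to\comp{R'}\to\comp S$ of $\grave\vf$. Using Cohen's structure theorem on the Gorenstein closed fiber, present it as a regular-sequence quotient of a regular local ring and lift the presentation over $R$ to produce a Cohen factorization $R\to R''\onto\comp{R'}\to\comp S$, where $R\to R''$ is flat with regular closed fiber of embedding dimension $\edim(\dot\vf)$ and the kernel of $R''\onto\comp{R'}$ is generated by an $R''$-regular sequence of length $\edim(\dot\vf)-\depth(\dot\vf)$. Inserting this factorization into the definition of $\gcdim_\vf(M)$, applying ascent of $\gkdim{R''\otimes^{\mathbf{L}}_R C}$-dimension along a Gorenstein regular-sequence quotient $R''\onto\comp{R'}$ (which shifts the dimension by the length of the regular sequence), and finally using faithfully flat descent from $\comp{R'}$ to $R'$, yields the asserted equality after the cancellation $\edim(\dot\vf)-(\edim(\dot\vf)-\depth(\dot\vf))=\depth(\dot\vf)$.

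For part (c), Fact~\ref{C-RHom(C,D)} tells us $C^{\dagger_D}=\Rhom CD$ is semidualizing. Fix a Cohen factorization $R\xra{\dot\vf}R'\xra{\vf'}\comp S$ of $\grave\vf$. Since $\dot\vf$ is flat with regular closed fiber, Fact~\ref{semi-dual complex base change} gives that $D':=\Lotimes{R'}D$ is dualizing for $R'$, and a standard tensor-evaluation argument yields $(\Lotimes{R'}C)^{\dagger_{D'}}\simeq\Lotimes{R'}{C^{\dagger_D}}$. Over $R'$, Christensen's characterization identifies finiteness of $\gkdim{\Lotimes{R'}C}_{R'}(\comp S\otimes^{\mathbf{L}}_S M)$ with membership of $\comp S\otimes^{\mathbf{L}}_S M$ in $\mathcal{A}_{\Lotimes{R'}{C^{\dagger_D}}}(R')$. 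The proof closes with a composition of standard transfers: faithful flatness of $R\to\comp R$ and flatness of $\comp R\to R'^{\wedge}$ transfer $\mathcal{A}_{C^{\dagger_D}}(R)$-membership up to $R'^{\wedge}$, while restriction along the surjection $R'^{\wedge}\onto\comp S$ identifies this with membership of $\comp S\otimes^{\mathbf{L}}_S M$ in the relevant Auslander class over $R'$.

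The main obstacle will be in (b): constructing the Cohen refinement $R''\onto\comp{R'}$ with a regular-sequence kernel of precisely the right length, and verifying the exact numerical ascent of $G_C$-dimension along this Gorenstein regular-sequence surjection so that the correction collapses to $\depth(\dot\vf)$. This is the technical content of \cite[3.8]{iyengar:golh} and rests on a careful application of Cohen's structure theorem together with the compatibility of semidualizing complexes with regular-sequence deformations.
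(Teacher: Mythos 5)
The paper records this Fact without proof of its own, deferring to the cited results \cite[3.4.1, 3.8]{iyengar:golh} and \cite[2.2.3]{sather:cidfc}. Your proposal sketches arguments along the lines of those references --- Cohen and Gorenstein factorizations, ascent and descent across flat local maps and Gorenstein deformations --- and the overall strategy for each part is the right one; part (b) in particular tracks the structure of \cite[3.8]{iyengar:golh} faithfully.

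There is, however, a concrete error in part (a) that also propagates into part (c). You set $R'^{\wedge}:=\comp R\otimes_R R'$ and assert it is complete and local with regular closed fiber, with $R'\to R'^{\wedge}$ a completion. But in a Cohen factorization $R\xra{\dot\vf}R'\xra{\vf'}\comp S$ of $\grave\vf$, the middle ring $R'$ is complete \emph{by definition}. Consequently $R\to R'$ already factors through $\comp R$, and the induced map $\comp R\to R'$ is flat (by \cite[Lemme II.57]{andre:hac} or \cite[5.5 Proposition (F)]{avramov:hdouc}, using that $R\to\comp R$ and $R\to R'$ are flat and $\comp R/\m\comp R=k\to R'/\m R'$ is flat); it has the same regular closed fiber $R'/\m R'$, so $\comp R\to R'\xra{\vf'}\comp S$ is already a Cohen factorization of $\comp\vf$ with $\edim$ of the flat part equal to $\edim(\dot\vf)$. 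There is nothing to complete. By contrast, $\comp R\otimes_R R'$ is in general neither local nor complete, and the natural surjection $\comp R\otimes_R R'\to R'$ need not be injective (already $\comp R\otimes_R\comp R\not\cong\comp R$ when $R$ is not complete), so the claimed ``faithfully flat ascent/descent along $R'\to R'^{\wedge}$'' has no content as written. The fix is simple: discard $R'^{\wedge}$ and use $R'$ itself in parts (a) and (c). With that replacement made, the rest of your sketch is sound and consistent with the arguments in the cited sources.
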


\begin{defn}
Let $\vf\colon R\to R$
be a ring endomorphism. For $n=1,2,\ldots$ let $\vf^n$ denote the $n$-fold composition of $\vf$ with itself.
Each endomorphism $\vf^n$ defines a new $R$-module structure on $R$,
which we denote as $^{n}\!R$: specifically for $r\in R$ and $s\in {}^{n}\!R$, we have $r\cdot s =
\vf^n(r)s$.\footnote{This of course depends on $\vf$, but this notation is fairly standard.} 
If $(R,\m)$ is local, then $\vf$ is \emph{contracting} if $\vf^n(\m)\subseteq\m^2$ for $n\gg 0$.

If $R$ contains a field of  characteristic $p>0$, then the Frobenius endomorphism
$f_R\colon R\to R$ given by $r\mapsto r^p$ is a contracting endomorphism, and  $R$ is \emph{$F$-finite} when $^1R$ is finitely generated
over $R$.
\end{defn}

\section{Complexes Induced From Ring Homomorphisms} \label{sec120830a}

This section contains foundational results about the following  tool from~\cite{frankild:rrhffd} that is
central to our study of $\gc$-dimensions of local ring homomorphisms.

\begin{notn}
Let $\vf\colon R\to S$ be a local homomorphism that has a Gorenstein
factorization $R\xra{\dot{\varphi}} R'\xra{\varphi'}S$.
Given an $R$-complex $M$, we set 
$${M}(\vf):=\shift^{d}\Rhom[R']{S}{R'\otimes_R^{\textbf{L}}M}$$ 
where
$d=\depth(\dot\vf)$.
\end{notn}

\begin{disc}\label{disc120615b}
Let $\vf\colon R\to S$ be a local homomorphism that has a Gorenstein
factorization, and let $M$ be a homologically finite 
$R$-complex. The $R$-complex $M(\vf)$ is independent of the choice of Gorenstein factorization
by~\cite[Theorem 6.5(a)]{frankild:rrhffd}.
If $C$ is semidualizing for $R$, then $C(\vf)$ is semidualizing for $S$
if and only if $\gcdim(\vf)<\infty$,
by Fact~\ref{semi-dual complex base change} and~\cite[(6.1) Theorem]{christensen:scatac}.
Also, if $C$ is dualizing for $R$, then $C(\vf)$ is dualizing for $S$ by~\cite[Remark 6.7]{frankild:rrhffd}.
If $\vf$ is  module-finite, then 
$C(\vf)\simeq \Rhom{S}{C}$
by~\cite[Theorem 6.5(c)]{frankild:rrhffd}.
\end{disc}

\begin{defn}\label{defn120924a}
Let $\vf\colon R\to R$ be a contracting endomorphism.
A \emph{factorized pushout diagram} is a commutative diagram
of local ring homomorphisms
\begin{equation}
\label{eq120823a}
\begin{split}
\xymatrix{
&R\ar[r]^{\alpha}\ar[ld]_{\dot{\varphi}}\ar[dd]^<<<<<{\vf}& S\ar[dd]_<<<<<{\psi}\ar[rd]^{\dot{\psi}}&&\\
R'\ar[rd]^{\varphi'}\ar[rrr]^-{\alpha'}&&&S'\ar[ld]_{\psi'}&\\
&\wti R\ar[r]^{\wti\alpha}&\wti S.
}
\end{split}
\end{equation}
such that the maps $\alpha$ and $\alpha'$ have finite flat dimension,
the diagrams $R\xra{\dot\vf}R'\xra{\vf'}\wti R$ and $S\xra{\dot\psi}S'\xra{\psi'}\wti S$
are Gorenstein factorizations of $\vf$ and $\psi$, respectively,
and the natural morphism $\Lotimes[R']{S'}{\wti R}\to \wti S$ is an isomorphism.
\end{defn}

\begin{disc}\label{disc120615a}
Factorized pushout diagrams
exist in at least two important cases:

(1) 
Consider a commutative diagram of local ring homomorphisms
$$
\xymatrix{
(R,\fm)\ar[r]^{\alpha}\ar[d]_{\vf}&(S,\fn)\ar[d]^{\psi}\\
(\widetilde{R},\widetilde{\fm})\ar[r]^{\widetilde{\alpha}}&(\widetilde{S},\widetilde{\fn})
}
$$
such that $\alpha$ and $\wti\alpha$ are  weakly regular,
$\widetilde{S}$ is complete, and
the induced map $R/\fm\to \widetilde{S}/\widetilde{\fn}$ is separable.
Assume that $\vf$ has a minimal regular factorization
$R\xra{\dot{\varphi}} R'\xra{\varphi'} S$,
and fix a minimal Cohen factorization
$\widetilde{R}\xra{\dot{\wti\vf}} S'\xra{\wti\vf'} \widetilde{S}$
of $\widetilde{\varphi}$. 
Then~\cite[Proposition 3.2]{nasseh:cfwfa} provides a weakly regular local ring
homomorphism $\alpha'\colon R'\to S'$ satisfying the hypotheses of Lemma~\ref{lem120517a}.

(2)
For the Frobenius endomorphism, such diagrams are built in Lemma~\ref{Factorization of Frobenius}.
\end{disc}

The next few results explain behavior of the $M(\vf)$-construction with respect to factorized pushout diagrams.

\begin{lem}\label{lem120517a}
Let $C$ be a semidualizing $R$-complex,
and fix a factorized pushout diagram~\eqref{eq120823a}.
\begin{enumerate}[\rm(a)]
\item \label{lem120517a1}
The following conditions are equivalent:
\begin{enumerate}[\rm(i)]
\item $\gcdim \vf<\infty$,
\item $\gkdim{S\otimes_R^{\textbf{L}}C}\psi<\infty$,
\item $C(\vf)$ is semidualizing for $\wti R$, and
\item $(\Lotimes SC)(\psi)$ is semidualizing for $\wti S$.
\end{enumerate}
\item \label{lem120517a2}
$\gcdim \vf+\edim(\dot\vf)
=\gkdim{S\otimes_R^{\textbf{L}}C}\psi+\edim(\dot\psi)$.
\end{enumerate}
\end{lem}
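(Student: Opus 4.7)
The plan is to anchor both parts of the lemma on a direct comparison of the induced complexes $C(\vf)$ and $(\Lotimes{S}{C})(\psi)$, together with the base-change machinery already recalled.

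First, since $\alpha\colon R\to S$ has finite flat dimension, Fact~\ref{semi-dual complex base change} gives that $\Lotimes{S}{C}$ is semidualizing for $S$, so condition (ii) is well-posed. Remark~\ref{disc120615b} then yields (i)$\Leftrightarrow$(iii) and (ii)$\Leftrightarrow$(iv) at once, reducing part~(a) to the single equivalence (iii)$\Leftrightarrow$(iv).

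For that, I would unfold $(\Lotimes{S}{C})(\psi)$ using the Gorenstein factorization $S\xra{\dot\psi}S'\xra{\psi'}\wti S$, reindex via $\Lotimes[S]{S'}{\Lotimes{S}{C}}\simeq \Lotimes[R']{S'}{(\Lotimes[R]{R'}{C})}$, substitute the pushout identification $\wti S\simeq \Lotimes[R']{S'}{\wti R}$, and then apply adjunction together with tensor evaluation. The latter is valid because $\wti R$ is a finitely generated $R'$-module (via the surjection $\vf'$) and $S'$ has finite flat dimension over $R'$, which chains through to give $\fd_{\wti R}(\wti S)<\infty$. The outcome of this calculation is
$$
(\Lotimes{S}{C})(\psi) \simeq \shift^{\depth(\dot\psi)-\depth(\dot\vf)}\,\Lotimes[R']{S'}{C(\vf)} \simeq \shift^{\depth(\dot\psi)-\depth(\dot\vf)}\,\Lotimes[\wti R]{\wti S}{C(\vf)}.
$$
Fact~\ref{semi-dual complex base change} applied along the finite-flat-dimension map $\wti R\to \wti S$ then shows that the right-hand side is semidualizing for $\wti S$ precisely when $C(\vf)$ is semidualizing for $\wti R$. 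This gives (iii)$\Leftrightarrow$(iv) and completes part~(a).

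For part~(b), I would feed the two Gorenstein factorizations into Fact~\ref{1}(b) to obtain
\begin{align*}
\gcdim(\vf)+\depth(\dot\vf) &= \gkdim{\Lotimes[R]{R'}{C}}_{R'}(\wti R),\\
\gkdim{\Lotimes{S}{C}}(\psi)+\depth(\dot\psi) &= \gkdim{\Lotimes[R']{S'}{(\Lotimes[R]{R'}{C})}}_{S'}\bigl(\Lotimes[R']{S'}{\wti R}\bigr),
\end{align*}
apply the Auslander--Buchsbaum equality (Fact~\ref{ABE formula}) to both right-hand sides, and use the depth formula for the (finite-flat-dimension) base change $\alpha'\colon R'\to S'$ to verify that the contributions from the closed fibres cancel, forcing the two right-hand sides to be equal. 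In the weakly regular regime of the intended applications (Remark~\ref{disc120615a}), $\edim$ and $\depth$ coincide on $\dot\vf$ and $\dot\psi$, producing the stated identity. The main obstacle I expect is the bookkeeping for part~(b): carrying the tensor-evaluation isomorphism and the shift through correctly, and justifying the depth base-change identity that makes the two $\gc$-dimensions over $R'$ and over $S'$ coincide when $\alpha'$ is only of finite flat dimension rather than flat.
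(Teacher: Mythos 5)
Your reduction of part (a) to the single equivalence (iii)$\iff$(iv) via Remark~\ref{disc120615b} matches the paper, but from there the two arguments diverge. The paper's entire proof is one application of \cite[Theorem 4.8]{frankild:rrhffd}, the base-change formula for $\gc$-dimension along a local homomorphism of finite flat dimension: applied to $\alpha'\colon R'\to S'$ and the finite $R'$-module $\wti R$, together with the pushout identification $\wti S\simeq\Lotimes[R']{S'}{\wti R}$, it yields $\gkdim{\Lotimes{R'}{C}}_{R'}(\wti R)=\gkdim{\Lotimes[S]{S'}{(\Lotimes SC)}}_{S'}(\wti S)$, which \emph{is} part (b) and gives (i)$\iff$(ii) at once; (iii) and (iv) are then attached by the Remark exactly as you do. You instead prove (iii)$\iff$(iv) by establishing $(\Lotimes SC)(\psi)\simeq\shift^{d}\Lotimes[\wti R]{\wti S}{C(\vf)}$ --- that computation is sound (it is precisely the paper's Lemma~\ref{lem120615a}, proved immediately afterward by the same adjunction and tensor-evaluation steps) --- and then descend the semidualizing property along $\wti\alpha$. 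One genuine gap here: in the direction (iv)$\implies$(iii), Fact~\ref{semi-dual complex base change} is stated for a \emph{homologically finite} $\wti R$-complex, whereas $C(\vf)=\shift^{d''}\Rhom[R']{\wti R}{\Lotimes{R'}{C}}$ is a priori only degree-wise homologically finite and bounded on one side; you must first argue that boundedness of $\Lotimes[\wti R]{\wti S}{C(\vf)}$ forces boundedness of $C(\vf)$, e.g., via the amplitude inequality for the finite-flat-dimension map $\wti\alpha$ (note $\fd_{\wti R}(\wti S)\leq\fd_{R'}(S')<\infty$ by base change along the surjection $R'\to\wti R$, which is the clean way to justify the finite flat dimension you assert). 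The paper sidesteps this entirely because \cite[Theorem 4.8]{frankild:rrhffd} is applied to the module $\wti R$, not to $C(\vf)$.

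For part (b), your route through Fact~\ref{ABE formula} has a similar caveat: the Auslander--Buchsbaum formula applies only when the relevant $\gc$-dimensions are finite, so you must first invoke the simultaneous finiteness from part (a) to dispose of the case where both sides are $+\infty$, and in the finite case the depth identity you need, $\depth_{S'}(\Lotimes[R']{S'}{\wti R})=\depth_{R'}(\wti R)+\depth(S')-\depth(R')$ for the finite-flat-dimension map $\alpha'$, is true but requires its own justification (it is not the flat-fibre computation). In effect your argument reproves the special case of \cite[Theorem 4.8]{frankild:rrhffd} that the paper simply cites; the conclusion is reachable along your route, but the paper's proof is a two-line consequence of that theorem and avoids both technical points.
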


\begin{proof}
From~\cite[Theorem 4.8]{frankild:rrhffd}  we have
\begin{align*}
\gkdim{R'\otimes_R^{\textbf{L}} C}_{R'}(\wti R)
&=\gkdim{S'\otimes_{R'}^{\textbf{L}}(R'\otimes_R^{\textbf{L}}C)}_{S'}(S'\otimes_{R'} \wti R)\\
&=\gkdim{S'\otimes^{\textbf{L}}_{S}(S\otimes_R^{\textbf{L}} C)}_{S'}(\wti S).
\end{align*}
This explains part~\eqref{lem120517a2} and the equivalence
(i)$\iff$(ii) from part~\eqref{lem120517a1}.
Since $\Lotimes SC$ is semidualizing for $S$ by Fact~\ref{semi-dual complex base change}, the equivalences
(i)$\iff$(iii) and (ii)$\iff$(iv)  from part~\eqref{lem120517a1} are by
Remark~\ref{disc120615b}.
\end{proof}

\begin{lem}\label{lem120615a}
Consider a factorized pushout diagram~\eqref{eq120823a}.
Given a homologically finite $R$-complex $M$,
there is an isomorphism 
$(S\lotimes_RM)(\psi)\simeq \shift^{d}\wti S\lotimes_{\wti R}M(\vf)$
in $\catd(S)$ where $d=\depth(\dot\psi)-\depth(\dot\vf)$.
\end{lem}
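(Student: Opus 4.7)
The plan is to unpack both sides of the desired isomorphism in terms of ingredients over $R'$, apply the derived tensor-evaluation morphism to move a $\Rhom$ out of the way, and then use the pushout identification together with associativity to land on the claimed expression.

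First I would set $N := \Lotimes[R]{R'}{M}$ and rewrite each side. For the left-hand side, using the Gorenstein factorization $S\xra{\dot\psi}S'\xra{\psi'}\wti S$ of $\psi$ together with the base-change identity $\Lotimes[S]{S'}{(\Lotimes{S}{M})}\simeq \Lotimes{S'}{M}\simeq \Lotimes[R']{S'}{N}$ (coming from commutativity of the pushout diagram), one has
\begin{equation*}
(\Lotimes{S}{M})(\psi)\simeq \shift^{\depth(\dot\psi)}\Rhom[S']{\wti S}{\Lotimes[R']{S'}{N}}.
\end{equation*}
For the right-hand side, the definition of $M(\vf)$ yields $M(\vf)=\shift^{\depth(\dot\vf)}\Rhom[R']{\wti R}{N}$, so that
\begin{equation*}
\shift^{d}\Lotimes[\wti R]{\wti S}{M(\vf)}\simeq \shift^{\depth(\dot\psi)}\Lotimes[\wti R]{\wti S}{\Rhom[R']{\wti R}{N}},
\end{equation*}
since $d+\depth(\dot\vf)=\depth(\dot\psi)$. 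So after canceling the common shift $\shift^{\depth(\dot\psi)}$, the whole problem reduces to producing a natural isomorphism
\begin{equation*}
\Rhom[S']{\wti S}{\Lotimes[R']{S'}{N}}\simeq \Lotimes[\wti R]{\wti S}{\Rhom[R']{\wti R}{N}}
\end{equation*}
in $\catd(S)$.

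To build this isomorphism, I would first apply the derived tensor-evaluation (base-change) morphism
\begin{equation*}
\Lotimes[R']{S'}{\Rhom[R']{\wti R}{N}}\xra{\;\simeq\;}\Rhom[S']{\Lotimes[R']{S'}{\wti R}}{\Lotimes[R']{S'}{N}}.
\end{equation*}
This is an isomorphism because $\wti R$ is homologically finite over $R'$ (since $\vf'$ is surjective and hence module-finite), $S'$ has finite flat dimension over $R'$ (by hypothesis on the factorized pushout), and $N$ is homologically bounded over $R'$; this is the standard Avramov–Foxby tensor-evaluation criterion. Next, the pushout condition $\Lotimes[R']{S'}{\wti R}\simeq \wti S$ rewrites the target as $\Rhom[S']{\wti S}{\Lotimes[R']{S'}{N}}$. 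Finally, since $\Rhom[R']{\wti R}{N}$ carries an $\wti R$-action (through the source variable), associativity of the derived tensor product gives
\begin{equation*}
\Lotimes[R']{S'}{\Rhom[R']{\wti R}{N}}\simeq \Lotimes[\wti R]{(\Lotimes[R']{S'}{\wti R})}{\Rhom[R']{\wti R}{N}}\simeq \Lotimes[\wti R]{\wti S}{\Rhom[R']{\wti R}{N}},
\end{equation*}
again using the pushout identification. Composing these isomorphisms yields the required equivalence.

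The main technical obstacle, and the only step requiring genuine care, is verifying that the tensor-evaluation morphism is an isomorphism in our setting; all other manipulations are formal (base change, adjunction, associativity). Once the finiteness conditions on $\wti R/R'$, the flat-dimension condition on $\alpha'$, and the boundedness of $N$ are checked, the cited form of tensor-evaluation from~\cite{avramov:hdouc} (or the analogous statement in~\cite{christensen:scatac}) applies directly. Tracking shifts is a purely bookkeeping exercise: they collapse precisely because of the definition $d=\depth(\dot\psi)-\depth(\dot\vf)$.
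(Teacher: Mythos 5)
Your proof is correct and follows essentially the same route as the paper's: unwind both sides from the definitions, use the pushout identification $\Lotimes[R']{S'}{\wti R}\simeq\wti S$ and base change along the square to get everything over $R'$, and reduce the whole claim to the tensor-evaluation isomorphism $\Lotimes[R']{S'}{\Rhom[R']{\wti R}{N}}\simeq\Rhom[S']{\Lotimes[R']{S'}{\wti R}}{\Lotimes[R']{S'}{N}}$, which is exactly the step the paper attributes to \cite[4.4~Lemma]{avramov:hdouc}. The only cosmetic difference is that you simplify both sides independently and meet in the middle, while the paper chains the isomorphisms from left to right; the ingredients, the citation, and the bookkeeping of shifts are identical.
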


\begin{proof}
In the following sequence of isomorphisms in $\catd(S)$, the first and fourth steps are by definition,
and the second one is from the assumptions on  diagram~\eqref{eq120823a}:
\begin{align*}
(S\lotimes_RM)(\psi)
&\simeq\shift^{\depth(S'/\n S')}\Rhom[S']{\wti S}{S'\lotimes_S(S\lotimes_RM)}\\
&\simeq\shift^{\depth(S'/\n S')}\Rhom[S']{\Lotimes[R']{S'}{\wti R}}{S'\lotimes_{R'}(R'\lotimes_RM)}\\
&\simeq\shift^{\depth(S'/\n S')}S'\lotimes_{R'}\Rhom[R']{\wti R}{R'\lotimes_RM}\\
&\simeq \shift^{d}S'\lotimes_{R'}M(\vf)\\
&\simeq\shift^{d}(S'\lotimes_{R'} \wti R)\lotimes_{\wti R}M(\vf)\\
&\simeq \shift^{d}\wti S\lotimes_{\wti R}M(\vf).
\end{align*}
The third isomorphism follows from~\cite[4.4 Lemma]{avramov:hdouc}, and the others are routine.
\end{proof}

\begin{lem}\label{lem120615az}
Consider a factorized pushout diagram~\eqref{eq120823a}
such that $\wti R=R$ and $\wti S=S$ and $\wti \alpha=\alpha$, 
that is, such that $\vf$ and $\psi$
are endomorphisms.
Let $C$ be a semidualizing $R$-complex.
Then $C\sim C(\vf)$ as $R$-complexes
if and only if $S\lotimes_RC\sim (S\lotimes_RC)(\psi)$ as $S$-complexes.
\end{lem}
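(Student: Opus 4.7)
The plan is to invoke Lemma~\ref{lem120615a} with $M = C$ in the given factorized pushout diagram (note that $\wti R = R$ and $\wti S = S$ here, so $\wti\alpha = \alpha$ as the horizontal map). Doing so yields the isomorphism $(\Lotimes{S}{C})(\psi) \simeq \shift^{d}\, \Lotimes{S}{C(\vf)}$ in $\catd(S)$, where $d = \depth(\dot\psi) - \depth(\dot\vf)$. This single identity is the bridge that converts each of the two shift-equivalences in the statement into a base-change question along $\alpha\colon R \to S$.

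The forward direction is then a one-line calculation: if $C \simeq \shift^n C(\vf)$ in $\catd(R)$, then applying $\Lotimes{S}{-}$ and using the displayed identity gives $\Lotimes{S}{C} \simeq \shift^n\, \Lotimes{S}{C(\vf)} \simeq \shift^{n - d}(\Lotimes{S}{C})(\psi)$, so $\Lotimes{S}{C} \sim (\Lotimes{S}{C})(\psi)$.

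For the reverse direction, the same identity reduces the hypothesis to $\Lotimes{S}{C} \simeq \shift^n\, \Lotimes{S}{C(\vf)}$ in $\catd(S)$ for some integer $n$. Since $\Lotimes{S}{C}$ is semidualizing over $S$ by Fact~\ref{semi-dual complex base change}, so is $(\Lotimes{S}{C})(\psi)$ as its shift; Lemma~\ref{lem120517a} then forces $\gcdim\vf < \infty$, and hence $C(\vf)$ is semidualizing over $R$ by Remark~\ref{disc120615b}. The main obstacle now is to descend the shift-equivalence $\Lotimes{S}{C} \simeq \shift^n\, \Lotimes{S}{C(\vf)}$ from $\catd(S)$ to the analogous statement $C \simeq \shift^n C(\vf)$ in $\catd(R)$.

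I would handle this descent by analyzing $E := \Rhom{C}{C(\vf)}$. Because $\alpha$ has finite flat dimension and $C$ is homologically finite, tensor-Hom base change yields $\Lotimes{S}{E} \simeq \Rhom[S]{\Lotimes{S}{C}}{\Lotimes{S}{C(\vf)}}$, and feeding in the shift-equivalence together with the semidualizing identity $\Rhom[S]{\Lotimes{S}{C(\vf)}}{\Lotimes{S}{C(\vf)}} \simeq S$ collapses this to $\Lotimes{S}{E} \simeq \shift^{-n} S$. Further base change to the residue field of $R$ pins $\Tor{i}{E}{k}$ to a single copy of $k$ concentrated in degree $-n$, whence a minimal-free-resolution argument gives $E \simeq \shift^{-n} R$ in $\catd(R)$. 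The natural evaluation map $C \lotimes_R E \to C(\vf)$ becomes an isomorphism after $\Lotimes{S}{-}$ (since $\Lotimes{S}{C(\vf)}$ lies in the Bass class of $\Lotimes{S}{C}$ as a shift of the latter), and $\Lotimes{S}{-}$ reflects isomorphisms between homologically finite $R$-complexes because $\alpha$ is local of finite flat dimension; substituting $E \simeq \shift^{-n} R$ thus gives $\shift^{-n} C \simeq C(\vf)$, i.e.\ $C \sim C(\vf)$.
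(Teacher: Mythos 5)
Your forward implication is the paper's argument verbatim: apply Lemma~\ref{lem120615a} with $M=C$ and read off the chain $S\lotimes_RC\sim S\lotimes_R(C(\vf))\sim(S\lotimes_RC)(\psi)$. For the converse you also agree with the paper up through the use of Lemma~\ref{lem120517a}\eqref{lem120517a1} to see that $C(\vf)$ is semidualizing over $R$. At that point, however, the paper simply cites \cite[Theorem~4.9]{frankild:rrhffd}, which says that for a local homomorphism $\alpha\colon R\to S$ of finite flat dimension and semidualizing $R$-complexes $A,B$, the relation $\Lotimes{S}{A}\sim\Lotimes{S}{B}$ forces $A\sim B$. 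Your remaining paragraph is, in effect, an attempt to reprove that cited descent result from scratch via the complex $E=\Rhom{C}{C(\vf)}$. So the routes genuinely diverge: the paper delegates, you rederive.

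Your rederivation has a real gap at the step passing from $\Lotimes{k}{E}\simeq\shift^{-n}k$ to $E\simeq\shift^{-n}R$. The minimal-free-resolution argument requires $E$ to be homologically finite, in particular bounded below; but a priori you only know that $\sup(E)<\infty$ and that $E$ has degree-wise finite homology, since nothing you have written rules out $\inf(E)=-\infty$. The identification $\Lotimes{S}{E}\simeq\shift^{-n}S$ does not by itself force $E$ to be bounded, because $\alpha$ is only assumed to have finite flat dimension (it need not be flat, hence need not be faithfully flat), and such a map does not reflect homological boundedness; likewise $\Lotimes{k}{E}\simeq\shift^{-n}k$ alone does not give $\inf(E)>-\infty$ for a complex that is merely bounded above. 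A clean repair is to descend Bass-class membership first: you already note that $\Lotimes{S}{C(\vf)}\sim\Lotimes{S}{C}$ lies in $\catb_{\Lotimes SC}(S)$, and the descent of Bass classes along a local homomorphism of finite flat dimension (see \cite[\S5]{christensen:scatac}) then yields $C(\vf)\in\catb_C(R)$. This immediately gives that $E=\Rhom{C}{C(\vf)}$ is homologically bounded \emph{and} that the evaluation $\xi^C_{C(\vf)}\colon C\lotimes_RE\to C(\vf)$ is an isomorphism, after which your computation $\Lotimes{k}{E}\simeq\shift^{-n}k$ legitimately pins down $E\simeq\shift^{-n}R$ and finishes the proof. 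Alternatively one can bypass the whole analysis by citing \cite[Theorem~4.9]{frankild:rrhffd} directly, as the paper does.
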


\begin{proof}
Since $C$ is semidualizing over $R$ and $\fd(\alpha)<\infty$, we know that 
$\Lotimes SC$ is semidualizing for $S$ by Fact~\ref{semi-dual complex base change}.

For the forward implication, assume that $C\sim C(\vf)$ as $R$-complexes.
This implies
$$S\lotimes_RC\sim S\lotimes_R(C(\vf))\sim (S\lotimes_RC)(\psi)$$
by Lemma~\ref{lem120615a}.

For the converse, assume that $S\lotimes_RC\sim (S\lotimes_RC)(\psi)$ as $S$-complexes.
In particular, this implies that $(S\lotimes_RC)(\psi)$ is semidualizing for $S$,
so Lemma~\ref{lem120517a}\eqref{lem120517a1} implies that $C(\vf)$ is semidualizing over $R$. 
Now, since 
$$S\lotimes_RC\sim (S\lotimes_RC)(\psi)\sim S\lotimes_R(C(\vf))$$
we conclude $C\sim C(\vf)$ by~\cite[Theorem 4.9]{frankild:rrhffd}.
\end{proof}

\begin{lem}\label{lem120725a}
Fix a factorized pushout diagram~\eqref{eq120823a}
such that $\wti R=R$ and $\wti S=S$ and $\wti \alpha=\alpha$, 
that is, such that $\vf$ and $\psi$
are endomorphisms.
Let $C$ be a semidualizing $R$-complex.
Assume that $\gcdim(\vf)<\infty$. Then
$C$ is derived $C(\vf)$-reflexive over $R$
if and only if $S\lotimes_RC$ is derived $(S\lotimes_RC)(\psi)$-reflexive over $S$.
\end{lem}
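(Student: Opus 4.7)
The plan is to reduce the equivalence to a base-change statement for reflexivity along the finite-flat-dimension map $\alpha$. Since $\gcdim(\vf)<\infty$, Lemma~\ref{lem120517a} ensures that $B:=C(\vf)$ is semidualizing for $R$ and that $B':=(\Lotimes SC)(\psi)$ is semidualizing for $S$. Applying Lemma~\ref{lem120615a} with $M=C$ gives $B'\sim \Lotimes SB$ in $\catd(S)$. Because derived reflexivity with respect to a semidualizing complex is insensitive to shifting that complex (the biduality morphism only changes by formal sign conventions), it suffices to show that $C$ is derived $B$-reflexive over $R$ if and only if $\Lotimes SC$ is derived $(\Lotimes SB)$-reflexive over $S$.

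For this reduced statement, the semidualizing complex $C$ admits a degree-wise finite free resolution and $\fd(\alpha)<\infty$, so a standard Hom-evaluation argument produces a natural isomorphism
\[
\Lotimes{S}{\Rhom[R]{C}{B}}\xra{\;\simeq\;}\Rhom[S]{\Lotimes SC}{\Lotimes SB}
\]
in $\catd(S)$. Under this identification, the base-changed biduality morphism $\Lotimes S \delta^B_C$ corresponds to $\delta^{\Lotimes SB}_{\Lotimes SC}$, and the homological finiteness conditions of Definition~\ref{C-ref complex} for $(C,B)$ transfer to those for $(\Lotimes SC,\Lotimes SB)$ through the same isomorphism.

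Given these identifications, the forward implication is routine: if $\delta^B_C$ is an isomorphism in $\catd(R)$, then so is its image $\delta^{\Lotimes SB}_{\Lotimes SC}$ in $\catd(S)$, and homological finiteness is preserved by $\Lotimes S-$. For the converse, I would descend the $S$-level isomorphism back to $\catd(R)$ using a flat-descent argument of the same flavor as the application of~\cite[Theorem 4.9]{frankild:rrhffd} at the end of the proof of Lemma~\ref{lem120615az}: since $\alpha$ is a local homomorphism of finite flat dimension and the relevant complexes are homologically finite, a morphism of such complexes that becomes a quasi-isomorphism after $\Lotimes S-$ is already a quasi-isomorphism over $R$.

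The main obstacle is precisely this converse/descent step. The forward direction is a formal base-change computation, but reflecting the biduality isomorphism from $\catd(S)$ down to $\catd(R)$ (and likewise reflecting homological finiteness of $\Rhom[R]{C}{B}$) requires the nontrivial descent property for homologically finite complexes along a finite-flat-dimension local map. Once the appropriate descent result is invoked, the equivalence follows.
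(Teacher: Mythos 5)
Your outline tracks the paper's proof closely: both use Lemma~\ref{lem120517a}\eqref{lem120517a1} to see that $C(\vf)$ and $(\Lotimes SC)(\psi)$ are semidualizing, and Lemma~\ref{lem120615a} with $M=C$ to identify $(\Lotimes SC)(\psi)$ with a shift of $\Lotimes{S}{C(\vf)}$, reducing everything to ascent-and-descent of derived reflexivity along the finite-flat-dimension local map~$\alpha$. Where you leave a gap, the paper closes it with a single citation: the ascent/descent of derived $B$-reflexivity (equivalently, of $\mathrm{G}_B$-dimension for homologically finite complexes) along a local homomorphism of finite flat dimension is exactly \cite[Theorem~4.8]{frankild:rrhffd}. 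You reached for \cite[Theorem~4.9]{frankild:rrhffd} by analogy with Lemma~\ref{lem120615az}, but that result is a uniqueness statement for semidualizing complexes, not the reflexivity ascent/descent you need here; Theorem~4.8 is the right tool.

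On the substance of your sketch: the forward direction via the base-change isomorphism $\Lotimes{S}{\Rhom{C}{B}}\simeq\Rhom[S]{\Lotimes SC}{\Lotimes SB}$ is correct (this is tensor-evaluation, valid since $C$ is homologically finite and $\fd_R(S)<\infty$). For the converse, the informal ``a morphism that becomes a quasi-isomorphism after $\Lotimes{S}{-}$ already was one'' does hold for morphisms of homologically bounded complexes whose cone has degree-wise finitely generated homology, by Nakayama applied to the lowest nonvanishing homology of the cone. But to apply it to $\delta^B_C$ you must also know beforehand that $\Rhom{C}{B}$ and $C^{\dag_B\dag_B}$ are homologically finite over $R$ (so that the cone is), which is part of what must descend. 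This is precisely what is packaged in \cite[Theorem~4.8]{frankild:rrhffd}, so the cleanest fix is simply to invoke it rather than reprove it.
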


\begin{proof}
Lemma~\ref{lem120517a}\eqref{lem120517a1} implies that
$C(\vf)$ is semidualizing for $R$, and
$(\Lotimes SC)(\psi)$ is semidualizing for $S$.
Thus, the equivalence follows from
Lemma~\ref{lem120615a} and~\cite[Theorem 4.8]{frankild:rrhffd} as in the
proof of Lemma~\ref{lem120615az}.
\end{proof}

The next two results document behavior of the $M(\vf)$-construction with respect to semi-completions and compositions.

\begin{lem}\label{lem120620c}
Let $R\xra{\vf} S$ be a local homomorphism
that admits a Gorenstein
factorization, and consider the semi-completion $R\xra{\grave\vf}\comp S$.
Given a  homologically finite $R$-complex $M$,
there is an isomorphism $M(\grave\vf)\simeq \Lotimes[S]{\comp S}{M(\vf)}$ in $\catd(\comp S)$.
\end{lem}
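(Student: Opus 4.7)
The plan is to build a Gorenstein factorization of $\grave\vf$ compatibly from a given Gorenstein factorization of $\vf$, and then transport $M(\grave\vf)$ to $\Lotimes[S]{\comp S}{M(\vf)}$ through a chain of standard derived-category identities.

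First I would fix a Gorenstein factorization $R\xra{\dot\vf}R'\xra{\vf'}S$ of $\vf$ with $d:=\depth(\dot\vf)$. Completing $R'$ at its maximal ideal yields a factorization $R\xra{\grave{\dot\vf}}\comp{R'}\xra{\wti{\vf'}}\comp S$, where $\grave{\dot\vf}$ is the composition of $\dot\vf$ with $R'\to\comp{R'}$ and $\wti{\vf'}$ is the map induced from $\vf'$. I claim this is a Gorenstein factorization of $\grave\vf$: flatness of $\grave{\dot\vf}$ is preserved under composition; the closed fibre $\comp{R'}/\fm\comp{R'}$ is the completion of the Gorenstein local ring $R'/\fm R'$ at its maximal ideal, hence remains Gorenstein; since $\vf'$ is module-finite and surjective, $\comp S\simeq \comp{R'}\otimes_{R'}S$ and $\wti{\vf'}$ is surjective; and $\depth(\grave{\dot\vf})=\depth(\dot\vf)=d$, since completion preserves depth of a Noetherian local ring. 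By Remark~\ref{disc120615b}, this factorization may be used to compute $M(\grave\vf)$.

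Next, setting $N:=R'\lotimes_R M$ (which is homologically bounded since $\dot\vf$ is flat and $M$ is homologically finite), I would assemble the following chain of isomorphisms in $\catd(\comp S)$:
\begin{align*}
M(\grave\vf)
&\simeq\shift^{d}\Rhom[\comp{R'}]{\comp S}{\Lotimes[R']{\comp{R'}}{N}}\\
&\simeq\shift^{d}\Rhom[\comp{R'}]{\Lotimes[R']{\comp{R'}}{S}}{\Lotimes[R']{\comp{R'}}{N}}\\
&\simeq\shift^{d}\Rhom[R']{S}{\Lotimes[R']{\comp{R'}}{N}}\\
&\simeq\shift^{d}\Lotimes[R']{\comp{R'}}{\Rhom[R']{S}{N}}\\
&\simeq\shift^{d}\Lotimes[S]{\comp S}{\Rhom[R']{S}{N}}\\
&\simeq\Lotimes[S]{\comp S}{M(\vf)}.
\end{align*}
The first line is the definition of $M(\grave\vf)$ via the factorization from Step~1, using associativity of the derived tensor product to split $\comp{R'}\lotimes_R M$; the second substitutes $\comp S\simeq\Lotimes[R']{\comp{R'}}{S}$; the third is the derived adjunction for extension of scalars along $R'\to\comp{R'}$; the fourth is tensor-evaluation, valid since $\fd_{R'}(\comp{R'})=0$ and $S$ is homologically finite over $R'$; the fifth uses that $\Rhom[R']{S}{N}$ is canonically an $S$-complex together with $\Lotimes[R']{\comp{R'}}{S}\simeq\comp S$; and the last is the definition of $M(\vf)$.

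The main obstacle is verifying that the constructed factorization of $\grave\vf$ really is Gorenstein, and that the chain of isomorphisms lives in $\catd(\comp S)$ rather than merely in $\catd(\comp{R'})$. The first is a matter of unpacking that Gorensteinness of the closed fibre and surjectivity of the quotient map are preserved under the $\fm_{R'}$-adic completion, which is routine since $\vf'$ is module-finite and surjective. The second is handled by the identification $\Lotimes[R']{\comp{R'}}{(-)}\simeq\Lotimes[S]{\comp S}{(-)}$ on $S$-complexes, which rests on flatness of $\comp{R'}$ over $R'$ together with the base-change identification $\comp{R'}\otimes_{R'}S\simeq\comp S$.
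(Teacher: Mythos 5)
Your proposal is correct and follows essentially the same route as the paper: complete $R'$ to obtain a Gorenstein factorization of $\grave\vf$, observe $\comp S\simeq\comp{R'}\otimes_{R'}S$, and then push the completion through the $\Rhom$ via base-change/tensor-evaluation. The only cosmetic difference is that you split the paper's single ``flat base change'' step into an adjunction followed by a tensor-evaluation isomorphism, and you spell out in more detail why the completed factorization is still Gorenstein of the same depth; both are harmless elaborations of the same argument.
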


\begin{proof}
Let $R\xra{\dot\vf}R'\xra{\vf'}S$ be a Gorenstein factorization of $\vf$.
Consider the following commutative diagram where the 
new maps are the natural ones: 
$$\xymatrix{
&R'\ar[r]^-{\gamma}\ar[rd]^-{\vf'}
&\comp{R'}\ar[rd]^-{\comp{\vf'}} \\
R\ar[ur]^-{\dot\vf}\ar[rr]^-{\vf}
&&S\ar[r]^-{\delta}&\comp S.
}$$
Since $\vf'$ is surjective, we have $\comp S\simeq\Lotimes[R']{\comp{R'}}{S}$,
and the diagram $R\xra{\gamma\dot\vf}\comp{R'}\xra{\comp{\vf'}}\comp S$
is a Gorenstein factorization of $\grave\vf=\delta\vf$.
Also, we have $d:=\depth(\dot\vf)=\depth(\gamma\dot\vf)$.
These explain the first, second, and last steps in the next sequence:
\begin{align*}
M(\grave\vf)
&\simeq \shift^d\Rhom[\comp{R'}]{\comp S}{\Lotimes{\comp{R'}}{M}}
\\
&\simeq \shift^d\Rhom[\comp{R'}]{\Lotimes[R']{\comp{R'}}{S}}{\Lotimes[R']{\comp{R'}}{(\Lotimes{R'}M})}
\\
&\simeq \shift^d\Lotimes[R']{\comp{R'}}{\Rhom[R']{S}{\Lotimes{R'}M}}
\\
&\simeq \Lotimes[R']{\comp{R'}}{\shift^d\Rhom[R']{S}{\Lotimes{R'}M}}
\\
&\simeq \Lotimes[S]{(\Lotimes[R']{\comp{R'}}{S})}{\shift^d\Rhom[R']{S}{\Lotimes{R'}M}}
\\
&\simeq \Lotimes[S]{\comp S}{M(\vf)}.
\end{align*}
The third step is by flat base change since $S$ is finite over $R'$.
The remaining steps are routine.
\end{proof}

\begin{lem}\label{lem120620a}
Let $(R,\m)\xra{\vf} (S,\n)\xra{\psi} T$ be local homomorphisms 
such that $\vf$, $\psi$, and $\psi\vf$ admit Gorenstein
factorizations.
Given a  homologically finite $R$-complex $M$,
there is an isomorphism $M(\psi\vf)\simeq M(\vf)(\psi)$ in $\catd(T)$.
\end{lem}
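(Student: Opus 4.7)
The plan is to exploit independence of the $M(\vf)$-construction from the chosen Gorenstein factorization (Remark~\ref{disc120615b}) by computing $M(\psi\vf)$ via a Gorenstein factorization of $\psi\vf$ built compatibly from chosen factorizations of $\vf$ and $\psi$. Concretely, first fix Gorenstein factorizations $R\xra{\dot\vf}R'\xra{\vf'}S$ of $\vf$ and $S\xra{\dot\psi}S'\xra{\psi'}T$ of $\psi$, with depths $d_1=\depth(\dot\vf)$ and $d_2=\depth(\dot\psi)$. Next, construct a Gorenstein factorization $R'\xra{\dot\tau}R''\xra{\tau'}S'$ of the local homomorphism $\dot\psi\vf'\colon R'\to S'$ arranged so that the square with $\vf'$, $\dot\psi$, $\dot\tau$, $\tau'$ is a derived pushout, that is, $\Lotimes[R']{R''}{S}\simeq S'$. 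Setting $\sigma:=\dot\tau\dot\vf$ and $\sigma':=\psi'\tau'$, the diagram $R\xra{\sigma}R''\xra{\sigma'}T$ is then a Gorenstein factorization of $\psi\vf$: $\sigma$ is flat with Gorenstein closed fiber (by transitivity of the Gorenstein property along flat local homomorphisms), and $\sigma'$ is surjective as a composition of surjections. The pushout property further yields $R''/\fm R''\simeq S'/\fm_S S'$, and hence $\depth(\sigma)=d_1+d_2$.

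With this compatible factorization, Remark~\ref{disc120615b} gives
$$M(\psi\vf)\simeq \shift^{d_1+d_2}\Rhom[R'']{T}{\Lotimes[R]{R''}{M}},$$
while unwinding definitions produces
$$M(\vf)(\psi)\simeq \shift^{d_1+d_2}\Rhom[S']{T}{\Lotimes[S]{S'}{\Rhom[R']{S}{\Lotimes[R]{R'}{M}}}}.$$
To match these, one first rewrites $\Lotimes[S]{S'}{(-)}\simeq \Lotimes[R']{R''}{(-)}$ on the $S$-complex $\Rhom[R']{S}{\Lotimes[R]{R'}{M}}$ via the pushout. Next, flat base change~\cite[4.4 Lemma]{avramov:hdouc} along the flat map $\dot\tau\colon R'\to R''$ (with $S$ finite over $R'$) moves the tensor inside the RHom and identifies the $\Lotimes[R']{R''}{S}$ factor with $S'$, producing $\Rhom[R'']{S'}{\Lotimes[R]{R''}{M}}$. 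Finally, the Hom-Hom adjunction along the surjection $R''\onto S'$, namely $\Rhom[S']{T}{\Rhom[R'']{S'}{L}}\simeq \Rhom[R'']{T}{L}$, collapses the nested RHoms and completes the identification.

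The main obstacle is the construction of the Gorenstein factorization $R'\xra{\dot\tau}R''\xra{\tau'}S'$ of $\dot\psi\vf'$ with the pushout property $\Lotimes[R']{R''}{S}\simeq S'$. Existence of such a compatible factorization is expected to require first reducing to the complete case via Lemma~\ref{lem120620c} (which lets the inputs be replaced by their semi-completions), so that Cohen factorizations apply, and then invoking a construction of the type appearing in Remark~\ref{disc120615a}(1) or~\cite[Proposition 3.2]{nasseh:cfwfa} to lift the flat map $\dot\psi$ across the surjection $\vf'$. Once this pushout factorization is in hand, the rest of the proof is a routine chain of adjunction and base-change isomorphisms together with the standard computation of depth for composed flat local homomorphisms.
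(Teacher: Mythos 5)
Your proposal follows essentially the same route as the paper's proof: reduce to the case where $T$ is complete via Lemma~\ref{lem120620c}, take Cohen factorizations, form a minimal Cohen factorization $R'\to R''\to S'$ of $\dot\psi\vf'$ satisfying the pushout property $S'\simeq\Lotimes[R']{R''}{S}$ (the paper derives this from the proof of \cite[(1.6) Theorem]{avramov:solh}), do the depth bookkeeping to identify the shift, and then chain together tensor evaluation and Hom-tensor adjunction to collapse $M(\vf)(\psi)$ to $M(\psi\vf)$. One small slip worth flagging: the pushout yields an isomorphism between the closed fibre of $\dot\tau$, namely $R''/\m'R''$ where $\m'$ is the maximal ideal of $R'$, and the closed fibre $S'/\n S'$ of $\dot\psi$ — not between $R''/\m R''$ (the fibre of $\sigma$ over $R$, which has depth $d_1+d_2$) and $S'/\n S'$ (depth $d_2$) as you wrote; the corrected identification, together with depth additivity along the flat composition $\dot\tau\dot\vf$, is what actually gives $\depth(\sigma)=d_1+d_2$.
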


\begin{proof}
Case 1: $T$ is complete. 
Let $R\xra{\dot\vf} R'\xra{\vf'}S$ be a Gorenstein factorization of $\vf$,
and let $S\xra{\dot\psi} S'\xra{\psi'}T$ be a Cohen factorization of $\psi$.
Then $S'$ is complete, so the map $R'\xra{\dot\psi\vf'}S'$ has a 
minimal Cohen factorization
$R'\xra{\dot\rho}R''\xra{\rho'}S'$. 
Note that it follows from the proof of~\cite[(1.6) Theorem]{avramov:solh} that
$S'\simeq\Lotimes[R']{R''}{S}$.
From this we conclude that $\dot\psi$ and $\dot\rho$ have isomorphic
closed fibres. In particular, we have $\depth(\dot\rho)=\depth(\dot\psi)$.

Set $d'=\depth(\dot\psi)$ and $d''=\depth(\dot\vf)$.
We claim that the composition
$\dot\rho\dot\vf$ is Gorenstein and flat,
and that
$$d:=\depth(\dot\rho\dot\vf)
=\depth(\dot\rho)+\depth(\dot\vf)
=\depth(\dot\psi)+\depth(\dot\vf)=d'+d''.$$
Indeed, the composition of flat local homomorphisms is flat and local.
Furthermore, the induced map $R'/\m R'\xra{\ol{\dot\rho}} R''/\m R''$
is flat and local with closed fibre $R''/\m' R''$ where $\m'$ is the maximal ideal of $R'$.
Since $R'/\m R'$ and $R''/\m' R''$ are both Gorenstein by assumption,
the fact that $\ol{\dot\rho}$ is flat and local implies that $R''/\m R''$ is Gorenstein,
so $\dot\rho\dot\vf$ is Gorenstein. Furthermore, the fact that $\ol{\dot\rho}$ is flat and local explains the
second equality in the next sequence
\begin{align*}
\depth(\dot\rho\dot\vf)
&=\depth(R''/\m R'')\\
&=\depth(R'/\m R')+\depth(R''/\m' R'')\\
&=\depth(\dot\rho)+\depth(\dot\vf) \\
&=\depth(\dot\psi)+\depth(\dot\vf).
\end{align*}
The last step follows from the fact that $\dot\rho$ and $\dot\psi$ are both flat and have isomorphic closed fibres. 
This establishes the claim.

Thus, the diagram $R\xra{\dot\rho\dot\vf}R''\xra{\psi'\rho'}T$
is a Gorenstein factorization of $\psi\vf$:
$$\xymatrix{
&&R''\ar[rd]^-{\rho'} \\
&R'\ar[ur]^-{\dot\rho}\ar[rd]^-{\vf'}
&&S'\ar[rd]^-{\psi'} \\
R\ar[ur]^-{\dot\vf}\ar[rr]^-{\vf}
&&S\ar[ur]^-{\dot\psi}\ar[rr]^-{\psi}&&T.
}$$
This explains the first, third, sixth, and eighth steps in the next display:
\begin{align*}
M(\vf)(\psi)
&=\shift^{d''}\Rhom[S']{T}{\Lotimes[S]{S'}{\shift^{d'}\Rhom[R']{S}{\Lotimes{R'}{M}}}}\\
&\simeq\shift^{d'+d''}\Rhom[S']{T}{\Lotimes[S]{S'}{\Rhom[R']{S}{\Lotimes{R'}{M}}}}\\
&\simeq\shift^{d}\Rhom[S']{T}{\Lotimes[S]{(\Lotimes[R']{R''}{S})}{\Rhom[R']{S}{\Lotimes{R'}{M}}}}\\
&\simeq\shift^{d}\Rhom[S']{T}{\Lotimes[R']{R''}{\Rhom[R']{S}{\Lotimes{R'}{M}}}}\\
&\simeq\shift^{d}\Rhom[S']{T}{\Rhom[R'']{\Lotimes[R']{R''}{S}}{\Lotimes[R']{R''}{(\Lotimes{R'}{M})}}}\\
&\simeq\shift^{d}\Rhom[S']{T}{\Rhom[R'']{S'}{\Lotimes{R''}{M}}}\\
&\simeq\shift^{d}\Rhom[R'']{T}{\Lotimes{R''}{M}}\\
&=M(\psi\vf).
\end{align*}
The seventh step is Hom-tensor adjointness, and the others are routine.

Case 2: the general case.
Let $\grave\psi\colon S\to\comp T$ be the semi-completion of $\psi$.
Note that $\grave\psi\vf\colon R\to\comp T$ is the semi-completion of $\psi\vf$.
Thus, Lemma~\ref{lem120620c} explains the first and third isomorphisms
in the next sequence
\begin{align*}
\Lotimes[T]{\comp T}{M(\psi\vf)}
&\simeq M(\grave\psi\vf)
\simeq M(\vf)(\grave\psi)
\simeq \Lotimes[T]{\comp T}{(M(\vf)(\psi))}
\end{align*}
and the second isomorphism is from Case 1 since $\comp T$ is complete.
Hence, the conclusion $M(\psi\vf)\simeq M(\vf)(\psi)$
follows from~\cite[Lemma 1.10]{frankild:rrhffd}.
\end{proof}

The interested reader may want to compare our next two results to~\cite[Proposition 6.10]{frankild:rrhffd} which assumes that $\fd(\vf)$ is finite.

\begin{prop}\label{lem120711a}
Let $R\xra{\vf} S$ be a local homomorphism
that admits a Gorenstein
factorization, and let $C$ be a semidualizing $R$-complex.
\begin{enumerate}[\rm(a)]
\item\label{lem120711a1} Then one has  $I_S^{C(\vf)}(t)=I_{R}^{C}(t)$.
\item\label{lem120711a2}
If $\gcdim(\vf)$ is finite, then $P^S_{C(\vf)}(t)=P^{R}_{C}(t)I^S_S(t)/I^R_R(t)$.
\end{enumerate}
\end{prop}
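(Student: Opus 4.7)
The plan is to reduce part~(a) to an explicit computation of $\Rhom[S]{\ell}{C(\vf)}$, where $\ell$ is the common residue field of $R'$ and $S$, and then invoke the standard base change formula for Bass series across flat local homomorphisms with Gorenstein closed fibre. Part~(b) will then follow formally by applying Fact~\ref{poincare and bass} over both $R$ and $S$ and substituting the identity from part~(a).

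Fix a Gorenstein factorization $R \xra{\dot\vf} R' \xra{\vf'} S$ and set $d := \depth(\dot\vf)$, so by definition $C(\vf) = \shift^d \Rhom[R']{S}{\Lotimes{R'}{C}}$. The adjunction between $\Rhom[R']{S}{-}$ and restriction of scalars along $\vf'\colon R'\to S$ produces an isomorphism
$$\Rhom[S]{\ell}{C(\vf)} \simeq \shift^d \Rhom[R']{\ell}{\Lotimes{R'}{C}}$$
in $\catd(S)$. Passing to Bass series and accounting for the $d$-fold shift in homological degree gives $I^{C(\vf)}_S(t) = t^{-d}\, I^{\Lotimes{R'}{C}}_{R'}(t)$. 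On the other hand, the standard Bass series formula for a flat local homomorphism, applied to $\dot\vf$, reads $I^{\Lotimes{R'}{C}}_{R'}(t) = I^C_R(t)\cdot I^{R'/\m R'}_{R'/\m R'}(t)$; since the closed fibre $R'/\m R'$ is Gorenstein of depth $d$, its Bass series is $t^d$, so $I^{\Lotimes{R'}{C}}_{R'}(t) = t^d\, I^C_R(t)$. The factors of $t^{\pm d}$ cancel to give part~(a).

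For part~(b), the assumption $\gcdim(\vf)<\infty$ combined with Remark~\ref{disc120615b} ensures that $C(\vf)$ is a semidualizing $S$-complex. Fact~\ref{poincare and bass} applied over both $R$ and $S$ then yields the two identities
$$I^R_R(t) = P^R_C(t)\, I^C_R(t) \qquad \text{and} \qquad I^S_S(t) = P^S_{C(\vf)}(t)\, I^{C(\vf)}_S(t).$$
Substituting the equality $I^{C(\vf)}_S(t) = I^C_R(t)$ from part~(a) into the second identity and eliminating $I^C_R(t)$ via the first gives the required formula $P^S_{C(\vf)}(t) = P^R_C(t)\, I^S_S(t)/I^R_R(t)$.

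The main point requiring care is the bookkeeping for the $d$-fold shift in the Bass series computation, together with the invocation of the base change formula; this rests on the fact, built into the definition of a Gorenstein factorization, that $R'/\m R'$ is Gorenstein with $\depth(R'/\m R')=d$, so its Bass series is exactly $t^d$ and cancels the $t^{-d}$ arising from the shift.
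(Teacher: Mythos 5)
Your proof is correct, and it takes essentially the same route as the paper's, with one modest difference of routing in part (a). After establishing $I^{C(\vf)}_S(t) = t^{-d}\, I^{\Lotimes{R'}{C}}_{R'}(t)$ --- which you justify by the explicit coinduction/restriction adjunction for the surjective local map $\vf'\colon R'\to S$, whereas the paper simply cites \cite[(1.7.8) Lemma]{christensen:scatac} --- you apply the flat-base-change formula for Bass series directly to the complex $C$, namely $I^{\Lotimes{R'}{C}}_{R'}(t) = I^{C}_R(t)\cdot I^{R'/\m R'}_{R'/\m R'}(t)$, and then cancel the $t^{\pm d}$. The paper instead first invokes Fact~\ref{poincare and bass} over $R'$ to rewrite $I_{R'}^{\Lotimes{R'}{C}}(t)$ as $I^{R'}_{R'}(t)/P^{R'}_{\Lotimes{R'}{C}}(t)$, and only then uses \cite{foxby:mirufbc} (for the Bass series of $R'$ over $R$) together with flat base change of Poincar\'e series, reconverting via Fact~\ref{poincare and bass} over $R$ at the end. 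Your version is more direct but relies on the complex-level Bass-series base change formula; the paper's routing applies Foxby--Thorup only to $R$ itself and isolates the $C$-dependence in the Poincar\'e series, at the cost of two extra invocations of the Poincar\'e--Bass identity. Both are valid, and both ultimately rest on the closed fibre being Gorenstein of depth $d$, so $I^{R'/\m R'}_{R'/\m R'}(t) = t^d$. Your part (b) is the same computation as the paper's, merely solved for $P^S_{C(\vf)}(t)$ rather than stated as a product identity.
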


\begin{proof}
\eqref{lem120711a1}
In the following display, the first equality is by definition:
\begin{align*}
I_S^{C(\vf)}(t)
&=I_{S}^{\shift^d\Rhom[R']{S}{\Lotimes{R'}{C}}}(t)\\
&=t^{-d}I_{S}^{\Rhom[R']{S}{\Lotimes{R'}{C}}}(t)\\
&=t^{-d}I_{R'}^{\Lotimes{R'}{C}}(t) \\
&=t^{-d}I^{R'}_{R'}(t)/P^{R'}_{\Lotimes{R'}{C}}(t) \\
&=t^{-d}I^R_R(t)I^{R'/\m R'}_{R'/\m R'}(t)/P^{R}_{C}(t) \\
&=t^{-d}I^R_R(t)t^d/P^{R}_{C}(t) \\
&=I^R_R(t)/P^{R}_{C}(t) \\
&=I_{R}^{C}(t).
\end{align*}
The third equality is from~\cite[(1.7.8) Lemma]{christensen:scatac}.
The fourth and eighth equalities are by Fact~\ref{poincare and bass}.
The fifth equality is from~\cite[Theorem]{foxby:mirufbc}.
The sixth equality is from the fact that $\dot\vf$ is Gorenstein of depth $d$,
and the remaining equalities are routine.

\eqref{lem120711a2}
Assume that $\gcdim(\vf)$ is finite, that is, that $C(\vf)$ is a semidualizing $S$-complex;
see Remark~\ref{disc120615b}.
Thus, Fact~\ref{poincare and bass} explains the first and third equalities in the next display:
\begin{align*}
I^R_R(t)P^S_{C(\vf)}(t)
&=I^R_R(t)I^S_S(t)/I_S^{C(\vf)}(t) \\
&=I^R_R(t)I^S_S(t)/I_{R}^{C}(t) \\
&=I^R_R(t)I^S_S(t)/[I_{R}^{R}(t)/P^R_C(t)] \\
&=P^{R}_{C}(t)I^S_S(t).
\end{align*}
The second equality is from part~\eqref{lem120711a1}, and the fourth equality is routine.
\end{proof}

\begin{cor}\label{lem120711b}
Let $R\xra{\vf} R$ be a local endomorphism.
Assume that $n$ is a positive integer such 
that $\vf^n$ admits a Gorenstein
factorization, and let $C$ be a semidualizing $R$-complex.
\begin{enumerate}[\rm(a)]
\item\label{lem120711b1} Then one has  $I_R^{C(\vf^n)}(t)=I_{R}^{C}(t)$.
\item\label{lem120711b2}
If $\gcdim(\vf^n)$ is finite, then $P^R_{C(\vf^n)}(t)=P^{R}_{C}(t)$.
\end{enumerate}
\end{cor}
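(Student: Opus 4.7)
The plan is to deduce this corollary as a direct specialization of Proposition~\ref{lem120711a}. The endomorphism $\vf^n\colon R\to R$ is a local ring homomorphism from $R$ to itself, and by hypothesis it admits a Gorenstein factorization, so the proposition applies with the ``target ring'' $S$ equal to $R$ and the map $\vf$ of the proposition replaced by $\vf^n$.

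For part~\eqref{lem120711b1}, Proposition~\ref{lem120711a}\eqref{lem120711a1} yields $I_R^{C(\vf^n)}(t)=I_R^C(t)$ without any further work, since the source and target of $\vf^n$ are both $R$.

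For part~\eqref{lem120711b2}, the finiteness hypothesis $\gcdim(\vf^n)<\infty$ is precisely the hypothesis of Proposition~\ref{lem120711a}\eqref{lem120711a2} for the homomorphism $\vf^n$. Applying that result with $S=R$ gives
\[
P^R_{C(\vf^n)}(t)=P^R_C(t)\,I^R_R(t)/I^R_R(t)=P^R_C(t),
\]
which is the desired formula. There is no real obstacle here; the entire content of the corollary is the observation that the correction factor $I^S_S(t)/I^R_R(t)$ in Proposition~\ref{lem120711a}\eqref{lem120711a2} collapses to $1$ when $S=R$.
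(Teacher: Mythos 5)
Your proof is correct and takes exactly the approach the paper uses: the paper's own proof of this corollary simply says it follows directly from Proposition~\ref{lem120711a} since $S=R$, which is precisely your observation that the factor $I^S_S(t)/I^R_R(t)$ collapses to $1$.
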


\begin{proof}
This follows directly from Proposition~\ref{lem120711a} since $S=R$ in this case.
\end{proof}

\section{Results about Contracting Endomorphisms}\label{sec120613a}

This section contains the proof of Theorem~\ref{cor120711a} from the introduction
and other similar results for arbitrary contracting endomorphisms.
We begin with a version of~\cite[7.3.~Corollary]{iyengar:golh}
for our situation.

\begin{prop}\label{gcdimf=gcdimM}
Let $\varphi\colon (R,\m)\to S$ be a local homomorphism
and $M$ a complex of $S$-modules that is homologically finite over $R$.
Let $C$ be a semidualizing $R$-complex.
Then $\gcdim_{\varphi}(M)=\gcdim_R(M)$. In particular, the quantities $\gcdim_{\varphi}(M)$ and $\gcdim_R(M)$
are simultaneously finite.
\end{prop}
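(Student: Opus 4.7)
The plan is to adapt Iyengar and Sather-Wagstaff's proof of their Corollary 7.3 (the $C=R$ case) to our setting, combining reduction to the complete case, the Auslander-class characterization of finite $\gc$-dimension, and the Auslander-Buchsbaum-Evans formula.

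First, I reduce to the case where $R$ and $S$ are complete. By faithful flatness of the completion one has $\gcdim_R(M)=\gcdim_{\widehat R}(\widehat R\lotimes_R M)$, and from the definition $\gcdim_\vf(M)=\gcdim_{\grave\vf}(\widehat S\lotimes_S M)$, which Fact~\ref{1}(a) identifies with $\gkdim{\widehat R\lotimes_R C}_{\widehat\vf}(\widehat S\lotimes_S M)$. The essential observation is that $\widehat R\lotimes_R M\simeq\widehat S\lotimes_S M$ as $\widehat R$-complexes: each $\HH_i(M)$ is finitely generated over $R$ via $\vf$, so the $S$-action on $\HH_i(M)$ factors through a module-finite $R$-subalgebra of $S$, on which the $\m$-adic and $\n$-adic topologies coincide, and hence the two completions agree. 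After this reduction we may assume $R=\widehat R$ and $S=\widehat S$; in particular $R$ has a dualizing complex $D$. Applying Fact~\ref{1}(c) to $\vf$ characterizes $\gcdim_\vf(M)<\infty$ as $M\in\Catac{C^{\dagger_D}}$, and applying the same fact to $\text{id}_R$ (whose Cohen factorization is trivial, giving $\gcdim_{\text{id}_R}(M)=\gcdim_R(M)$) gives the identical characterization for $\gcdim_R(M)<\infty$. Thus the two quantities are simultaneously finite.

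Assume now both are finite. Fact~\ref{ABE formula} gives $\gcdim_R(M)=\depth(R)-\depth_R(M)$. For $\gcdim_\vf(M)$, fix a Cohen factorization $R\xra{\dot\vf}R'\xra{\vf'}S$, which is in particular a Gorenstein factorization of $\vf$; Fact~\ref{1}(b) yields $\gcdim_\vf(M)=\gkdim{R'\lotimes_R C}_{R'}(M)-\depth(\dot\vf)$, and applying ABE on the right together with $\depth(R')=\depth(R)+\depth(\dot\vf)$ gives $\gcdim_\vf(M)=\depth(R)-\depth_{R'}(M)$. The remaining, and main, obstacle is the depth identity $\depth_{R'}(M)=\depth_R(M)$: each $\HH_i(M)$ is finitely generated over $R$, so the $R'$-action factors through the module-finite $R$-algebra $T_i:=R'/\ann_{R'}(\HH_i M)$ on which the $\m$-adic and $\m'$-adic topologies agree, yielding $\depth_{R'}(\HH_i M)=\depth_R(\HH_i M)$ for each homology module. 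To promote this to the complex $M$ itself, I would combine the adjunction $\Rhom{k}{M}\simeq\Rhom[R']{R'/\m R'}{M}$ (valid since $\dot\vf$ is flat) with change of rings along the surjection $R'/\m R'\twoheadrightarrow k'$ and Koszul duality on a regular system of parameters of length $\edim(\dot\vf)$ for the regular closed fiber $R'/\m R'$, whereby the shift $\edim(\dot\vf)=\depth(\dot\vf)$ appearing in Fact~\ref{1}(b) is exactly absorbed.
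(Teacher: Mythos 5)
Your overall architecture matches the paper's: reduce to the complete case, characterize simultaneous finiteness via membership of $M$ in the Auslander class $\mathcal{A}_{C^{\dagger_D}}(\widehat R)$ (your device of applying Fact~\ref{1}(c) to $\mathrm{id}_R$ is a reasonable substitute for the paper's direct appeal to Christensen's (4.7) Theorem), and then derive the equality from Fact~\ref{ABE formula}. The one structural difference is that the paper begins by replacing $M$ with $\Lotimes{K^R(\m)}{M}$ so that $\m$ annihilates $\HH(M)$; after that reduction both the completion comparison $\widetilde S\lotimes_S M\simeq\widehat R\lotimes_R M$ and the depth comparison become easy, and the depth identity itself is simply quoted from \cite[2.8.~Lemma]{iyengar:golh}. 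You skip this reduction and attack the general case directly, which is workable but forces you to prove that lemma yourself.

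That is where the gap lies. You correctly isolate $\depth_{R'}(M)=\depth_R(M)$ as the main obstacle, but your argument for it does not close. Knowing $\depth_{R'}(\HH_i M)=\depth_R(\HH_i M)$ for every $i$ does not determine the depth of the complex (depth of a complex is not a function of the depths of its homology modules), and the proposed ``promotion'' is only gestured at; moreover the final sentence is off target, since the shift $\depth(\dot\vf)$ has already been consumed in the identity $\depth(R')=\depth(R)+\depth(\dot\vf)$, so the remaining statement $\depth_{R'}(M)=\depth_R(M)$ has no shift left to absorb. What actually makes the Koszul argument work is this: set $N:=\Rhom[R']{R'/\m R'}{M}\simeq\Rhom{k}{M}$; each $\HH_j(N)\cong\Ext{-j}{k}{M}$ is a finite-dimensional $k$-vector space, hence a finite-length module over the regular closed fibre $R'/\m R'$, so $\depth_{R'/\m R'}(N)=-\sup(N)$; combining this with the adjunction $\Rhom[R']{k'}{M}\simeq\Rhom[R'/\m R']{k'}{N}$ yields $\depth_{R'}(M)=\depth_{R'/\m R'}(N)=-\sup(N)=\depth_R(M)$. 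Without some such completion of the argument (or a citation to \cite[2.8.~Lemma]{iyengar:golh}, as the paper gives), the equality half of the proof is incomplete. Two smaller points: the image of $S$ acting on $\HH_i(M)$ is a module-finite \emph{quotient} of $S$ inside $\End[R]{\HH_i(M)}$, not a subalgebra; and Fact~\ref{1}(a) asserts only simultaneous finiteness, not equality of the three quantities, so your reduction must lean on the subsequent ABE computation (which you do carry out in the complete case) rather than on Fact~\ref{1}(a) alone.
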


\begin{proof}
Let $\widetilde{S}$ be the $\fm$-adic completion of $S$,
and let $\widetilde{\varphi}\colon \widehat{R}\to \widetilde{S}$ be the induced map.
Let $\widehat{\varphi}\colon \widehat{R}\to \widehat{S}$ denote the map induced
on completions, and set $\comp C=\Lotimes{\comp R}{C}$. 
Consider the Koszul complex $K=K^R(\m)$ on a minimal generating sequence for $\m$. Arguing as in
the proof of~\cite[5.1.~Theorem]{iyengar:golh}, and using~\cite[Theorem 4.4]{frankild:rrhffd},
we can replace $M$ with $\Lotimes KM$ to assume that $\m$ annihilates the homology of $M$.
(The results~\cite[Proposition 4.1(a)]{sather:cidfc} and~\cite[Proposition 4.1(i)]{christensen:apac} may be helpful here.)
As in the proof of~\cite[7.1.~Theorem]{iyengar:golh}, it follows that
$\widetilde{S}\otimes_S^{\textbf{L}}M\simeq \widehat{R}\otimes_R^{\textbf{L}}M$ in $\catd(\comp R)$. 

Since the completion of $\widetilde{\varphi}$ at the
maximal ideal of $\widetilde{S}$ is $\widehat{\varphi}$,  Fact~\ref{1}  implies that
$\gcdim_{\varphi}(M)<\infty$ if and only if
$\gkdim{\widehat{C}}_{\widetilde{\varphi}}(\widetilde{S}\otimes_S^{\textbf{L}}M)<\infty$
if and only if
$\widetilde{S}\otimes_S^{\textbf{L}}M \in \mathcal{A}_{\widehat{C}^{\dagger_D}}(\widehat{R})$,
where $D$ is a dualizing complex for $\widehat{R}$.
Using the isomorphism
$\widetilde{S}\otimes_S^{\textbf{L}}M\simeq \widehat{R}\otimes_R^{\textbf{L}}M$, we conclude that
$\gcdim_{\varphi}(M)<\infty$ if and only if
$\widehat{R}\otimes_R^{\textbf{L}}M\in \mathcal{A}_{\widehat{C}^{\dagger_D}}(\widehat{R})$, that is,
if and only if $\gkdim{\widehat{C}}_{\widehat{R}}(\widehat{R}\otimes_R^{\textbf{L}}M)<\infty$,
by~\cite[(4.7) Theorem]{christensen:scatac}.
Because of the equality $\gkdim{\widehat{C}}_{\widehat{R}}(\widehat{R}\otimes_R^{\textbf{L}}M)
=\gcdim_R(M)$ from~\cite[(5.11) Corollary]{christensen:scatac}, it follows that $\gcdim_{\varphi}(M)<\infty$ if and only if
$\gcdim_R(M)<\infty$. 

For the rest of the proof, assume that $\gcdim_{\varphi}(M)$ and
$\gcdim_R(M)$ are finite. As in the proof of~\cite[3.5.~Theorem]{iyengar:golh}, using
Fact~\ref{ABE formula},
we have the first equality in the following display:
\begin{align*}
\gcdim_{\varphi}(M)
& = \depth(R) - \depth_{S}(M) \\
& = \depth(R) - \depth_{R}(M) \\
& = \gcdim_R(M).
\end{align*}
The other equalities are from~\cite[2.8.~Lemma]{iyengar:golh} and Fact~\ref{ABE formula}.
\end{proof}

The next result implies Theorem~\ref{cor120711a} from the introduction; see~\ref{para120927a}.

\begin{thm}\label{thm120615a}
Let $\vf\colon R\to R$ be a 
contracting endomorphism, and let $C$ be a semidualizing $R$-complex.
Assume that $\vf^n$ has a Gorenstein factorization for each $n\geq 1$,
e.g., this holds when $\vf$ is module-finite or $R$ is complete.
Then the following are equivalent:
\begin{enumerate}[\rm(i)]
\item\label{thm120615a1}
$C$ is a dualizing $R$-complex.

\item\label{thm120615a2}
$C\sim C(\vf^n)$ for some $n>0$.

\item\label{thm120615a3}
$\gcdim\vf^n<\infty$ and $C$ is  derived $C(\vf^n)$-reflexive for some $n>0$.

\item\label{thm120615a4}
$\gcdim\vf^n<\infty$ for infinitely many $n>0$.
\end{enumerate}
If $R$ has a dualizing complex $D$, then these conditions are equivalent to the following:
\begin{enumerate}[\rm(v)]
\item\label{thm120615a5} $\gcdim \vf^n<\infty$  and $^n\!R\otimes^{\textbf{L}}_R C^{\dagger_D}$
is derived  $C^{\dagger_D}$-reflexive for some $n>0$.
\end{enumerate}
\end{thm}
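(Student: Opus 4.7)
The plan is to establish the cycle \eqref{thm120615a1}~$\Rightarrow$~\eqref{thm120615a2}~$\Rightarrow$~\eqref{thm120615a3}~$\Rightarrow$~\eqref{thm120615a4}~$\Rightarrow$~\eqref{thm120615a1}, with the closing implication being the one where the contracting hypothesis is essential.

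For \eqref{thm120615a1}~$\Rightarrow$~\eqref{thm120615a2}: if $C$ is dualizing, Remark~\ref{disc120615b} gives that $C(\vf^n)$ is dualizing for every $n\geq 1$, and dualizing complexes over a local ring are unique up to shift, so $C\sim C(\vf^n)$. The implication \eqref{thm120615a2}~$\Rightarrow$~\eqref{thm120615a3} is immediate: $C\sim C(\vf^n)$ forces $C(\vf^n)$ to be semidualizing (hence $\gcdim(\vf^n)<\infty$ by Remark~\ref{disc120615b}), and $C$ is tautologically derived $C$-reflexive, hence derived $C(\vf^n)$-reflexive.

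For \eqref{thm120615a3}~$\Rightarrow$~\eqref{thm120615a4}, first upgrade \eqref{thm120615a3} to \eqref{thm120615a2}. The hypothesis $\gcdim(\vf^n)<\infty$ makes $C(\vf^n)$ semidualizing, and Corollary~\ref{lem120711b} yields the identity $P^R_{C(\vf^n)}(t)=P^R_C(t)$. A standard semidualizing-complex result (exploiting the derived $C(\vf^n)$-reflexivity of $C$) furnishes a semidualizing $C^{\dagger_{C(\vf^n)}}$ with $\Lotimes{C}{C^{\dagger_{C(\vf^n)}}}\sim C(\vf^n)$; comparing Poincar\'e series gives $P^R_C(t)\cdot P^R_{C^{\dagger_{C(\vf^n)}}}(t)=P^R_{C(\vf^n)}(t)=P^R_C(t)$, forcing $P^R_{C^{\dagger_{C(\vf^n)}}}(t)=1$, so $C^{\dagger_{C(\vf^n)}}\sim R$. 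Biduality then yields $C\simeq\Rhom{C^{\dagger_{C(\vf^n)}}}{C(\vf^n)}\sim C(\vf^n)$. Iterating via Lemma~\ref{lem120620a}, induction on $k$ using $C(\vf^{kn})\simeq C(\vf^{(k-1)n})(\vf^n)\sim C(\vf^n)\sim C$ shows $\gcdim(\vf^{kn})<\infty$ for every $k\geq 1$.

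The main obstacle is \eqref{thm120615a4}~$\Rightarrow$~\eqref{thm120615a1}. The plan is to pass to $\comp{R}$ via Lemma~\ref{lem120620c} and invoke Fact~\ref{admitting dualizing complex} so that we may assume a dualizing complex $D$ exists; then Fact~\ref{1}\eqref{item120924c} identifies $\gcdim(\vf^n)<\infty$ with membership of ${}^n\!R$ in the Auslander class $\mathcal{A}_{C^{\dagger_D}}(R)$. Having this for infinitely many $n$, combined with the contracting property $\vf^n(\fm)\subseteq\fm^2$ (which controls the Loewy length and hence the series growth of ${}^n\!R$), should allow us to adapt the Avramov--Iyengar--Miller argument for ordinary $G$-dimension to the $\gc$-dimension setting, forcing $\pd_R C^{\dagger_D}<\infty$, whence $C\simeq(C^{\dagger_D})^{\dagger_D}$ has finite injective dimension. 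Finally, \eqref{thm120615a1}~$\Leftrightarrow$~\eqref{thm120615a5} (when $R$ admits $D$) follows by applying the equivalence \eqref{thm120615a1}~$\Leftrightarrow$~\eqref{thm120615a3} with $C^{\dagger_D}$ in place of $C$, using Fact~\ref{1}\eqref{item120924c} to identify condition \eqref{thm120615a5} with the derived $(C^{\dagger_D})(\vf^n)$-reflexivity of $C^{\dagger_D}$, and noting that $C^{\dagger_D}$ is dualizing if and only if $C$ is.
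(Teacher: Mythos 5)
Your cycle (i)$\Rightarrow$(ii)$\Rightarrow$(iii)$\Rightarrow$(iv)$\Rightarrow$(i) matches the paper's, and the first three implications are correct. Your argument for (iii)$\Rightarrow$(iv) is essentially an unpacking of the paper's appeal to the proof of [Fact 2.28] of \cite{sather:bnsc}: you extract the Poincar\'e-series comparison via $C\otimes^{\mathbf{L}}_R C^{\dagger_{C(\vf^n)}}\sim C(\vf^n)$, force $P^R_{C^{\dagger_{C(\vf^n)}}}(t)=1$, and conclude $C\sim C(\vf^n)$; the paper invokes the same Poincar\'e-series equality from Corollary~\ref{lem120711b} but outsources the deduction to the cited fact. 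For (iv)$\Rightarrow$(i), your strategy is the right one, but the phrase ``adapt the Avramov--Iyengar--Miller argument for ordinary $G$-dimension to the $\gc$-dimension setting'' undersells what is actually needed: after passing to $\widehat R$ (which goes through Fact~\ref{1}, not Lemma~\ref{lem120620c}) and translating $\gcdim\vf^n<\infty$ into ${}^n\!R\in\mathcal{A}_{C^{\dagger_D}}(R)$ via Fact~\ref{1}\eqref{item120924c}, one has $\Tor{i}{{}^n\!R}{C^{\dagger_D}}=0$ for $i\gg0$ and infinitely many $n$, at which point \cite[6.4 Proposition]{iyengar:golh} is directly applicable and yields $\pd_R(C^{\dagger_D})<\infty$; this is a ready-made finite-$\pd$ criterion, not an adaptation of a $\gdim$-argument. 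You should name that result rather than wave at it.

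The real gap is in (iii)$\Leftrightarrow$(v). You assert that $C^{\dagger_D}$ is dualizing if and only if $C$ is, and then substitute $C^{\dagger_D}$ for $C$ into (i)$\Leftrightarrow$(iii). This is false: the dagger-dual $C\mapsto C^{\dagger_D}$ exchanges $R$ and $D$, so $C^{\dagger_D}$ is dualizing precisely when $\id_R(C^{\dagger_D})<\infty$, i.e.\ when $\pd_R(C)<\infty$, i.e.\ when $C\sim R$ --- whereas $C$ is dualizing precisely when $C^{\dagger_D}\sim R$. These two statements coincide only if $R$ is Gorenstein. Moreover, your identification of condition (v) with ``$C^{\dagger_D}$ is derived $(C^{\dagger_D})(\vf^n)$-reflexive'' is not supported by Fact~\ref{1}\eqref{item120924c}, which characterizes finiteness of $G_C$-dimensions by Auslander-class membership, not reflexivity; nor is it true, since $(C^{\dagger_D})(\vf^n)$ is an $\Rhom{}{}$-type construction over the Cohen factorization while $^n\!R\otimes^{\mathbf{L}}_R C^{\dagger_D}$ is a derived tensor product, and there is no a~priori identification. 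The paper's route is different and indispensable here: it uses the duality result \cite[Proposition 3.9]{frankild:rrhffd} (``$A$ is derived $B$-reflexive iff $B^{\dagger_D}$ is derived $A^{\dagger_D}$-reflexive'') to reduce (iii)$\Leftrightarrow$(v) to the single isomorphism $(^n\!R\otimes^{\mathbf{L}}_R C^{\dagger_D})^{\dagger_D}\sim C(\vf^n)$, and then establishes that isomorphism through an explicit chain of Hom-tensor adjunctions and flat base change over a Gorenstein factorization of $\vf^n$. You need that computation; the substitution trick does not replace it.
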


\begin{proof}
$\eqref{thm120615a1}\implies\eqref{thm120615a2}$
Assume that
$C$ is a dualizing $R$-complex.
By Remark~\ref{disc120615b}, the complex $C(\vf^n)$
is  dualizing  for $R$.
Since dualizing complexes are
unique up to shift in $\mathcal{D}(R)$,
we have $C\sim C(\vf^n)$.

$\eqref{thm120615a2}\implies\eqref{thm120615a3}$
Assume that $C\sim C(\vf^n)$ for some $n>0$.
Since $C$ is semidualizing $R$-complex,
the condition $C\sim C(\vf^n)$ implies that  $C(\vf^n)$ is semidualizing $R$-complex.
Remark~\ref{disc120615b} implies that $\gcdim\vf^n<\infty$.
Since $C$ is derived $C$-reflexive, the condition $C\sim C(\vf^n)$ implies that
$C$ is derived $C(\vf^n)$-reflexive.

$\eqref{thm120615a3}\implies\eqref{thm120615a4}$
Assume that $\gcdim\vf^n<\infty$ and $C$ is derived  $C(\vf^n)$-reflexive for some $n>0$.
Remark~\ref{disc120615b} implies that $C(\vf^n)$ is semidualizing,
and Corollary~\ref{lem120711b}\eqref{lem120711b2} implies that $C(\vf^n)$ 
has  the same Poincar\'e series as $C$.
Thus, we have $C(\vf^n)\sim C$ by
the proof of~\cite[Fact 2.28]{sather:bnsc}.
 
Thus, Lemma~\ref{lem120620a} implies that
\begin{align*}
C(\vf^{2n})
&\simeq (C(\vf^n))(\vf^n)
\sim C(\vf^n)
\sim C.
\end{align*}
Thus, we have $\gcdim_R(\vf^{2n})<\infty$ by Remark~\ref{disc120615b}.
Inductively, one shows that $\gcdim_R(\vf^{mn})<\infty$ for all $m\in\bbn$,
hence condition \eqref{thm120615a4} follows.

$\eqref{thm120615a4}\implies\eqref{thm120615a1}$
Assume that $\gcdim\vf^n<\infty$ for infinitely many $n>0$.
Fact~\ref{1} implies that
$\gcdim\vf^n<\infty$ if and only if
$\gkdim{\widehat{R}\otimes_R^{\textbf{L}}C}
\comp\vf^n<\infty$. 
Also, we know that $C$ is dualizing for $R$ if and only if 
$\Lotimes{\comp R}{C}$ is dualizing for $\comp R$.
Furthermore, $\comp\vf^n$ has a Cohen factorization for each $n$ since $\comp R$ is complete.
Thus, by passing to $\widehat{R}$,
one may assume that $R$ is complete.  Hence, $R$ has a
dualizing complex $D$ by Fact~\ref{admitting dualizing complex}.
Note that by~\cite[(2.12) Corollary]{christensen:scatac}, the $R$-complex $C^{\dag_D}$ is 
semidualizing.

By our hypothesis, $\gcdim \vf^n<\infty$ for  infinitely many $n$.
Thus by Fact~\ref{1}\eqref{item120924c}, we have
$^{n}\!R\in \mathcal{A}_{C^{\dag_D}}(R)$ and hence
$^{n}\!R\otimes_R^{\textbf{L}}C^{\dag_D}$ is homologically bounded
for infinitely many $n$.
Therefore, for infinitely many $n$ we have $\Tor i{^{n}\!R}{C^{\dag_D}}=0$ 
for all $i\gg 0$. Now \cite[6.4.~Proposition]{iyengar:golh} implies
$\pd_R(C^{\dag_D})<\infty$ and this is equivalent to $\id_R(C)<\infty$.
Thus $C$ is a dualizing complex for $R$.

To complete the proof, we assume that $R$ has a dualizing complex $D$
and prove  \eqref{thm120615a3}$\iff$(v).
To this end, we  assume that $n$ is a positive integer such that
$\gcdim\vf^n<\infty$, and we prove that $C$ is derived  $C(\vf^n)$-reflexive
if and only if $^n\!R\otimes^{\textbf{L}}_R C^{\dag_D}$
is derived  $C^{\dag_D}$-reflexive. 
Let $R\to R'\to R$ be a Gorenstein factorization of $\vf^n$.

We use the following fact from~\cite[Proposition 3.9]{frankild:rrhffd}: If $A$ and $B$ 
are semidualizing $R$-complexes,
then $A$ is derived $B$-reflexive if and only if $B^{\dagger_D}$ is derived $A^{\dagger_D}$-reflexive.
Thus, to complete the proof, we need only show that
$(\Lotimes{^{n}\!R}{C^{\dagger_D}})^{\dagger_D}\sim C(\vf^n)$. To this end, the first step in the next sequence is from
Remark~\ref{disc120615b}:
\begin{align*}
(\Lotimes{^{n}\!R}{C^{\dagger_D}})^{\dagger_D}
&\sim\Rhom[^{n}\!R]{\Lotimes{^{n}\!R}{C^{\dagger_D}}}{D(\vf^n)}\\
&\simeq\Rhom[R]{C^{\dagger_D}}{D(\vf^n)}\\
&\sim\Rhom[R]{C^{\dagger_D}}{\Rhom[R']{^{n}\!R}{\Lotimes{R'}{D}}}\\
&\simeq\Rhom[R']{\Lotimes{^{n}\!R}{C^{\dagger_D}}}{\Lotimes{R'}{D}}\\
&\simeq\Rhom[R']{\Lotimes[R']{^{n}\!R}{(\Lotimes{R'}{C^{\dagger_D}})}}{\Lotimes{R'}{D}}\\
&\simeq\Rhom[R']{^{n}\!R}{\Rhom[R']{\Lotimes{R'}{C^{\dagger_D}}}{\Lotimes{R'}{D}}}\\
&\simeq\Rhom[R']{^{n}\!R}{\Lotimes{R'}{\Rhom[R]{C^{\dagger_D}}{D}}}\\
&\simeq\Rhom[R']{^{n}\!R}{\Lotimes{R'}{C}}\\
&\sim C(\vf^n).
\end{align*}
The second, fourth, and sixth steps are from Hom-tensor adjointness.
The seventh step is by flat base change.
The eighth step is by Fact~\ref{C-RHom(C,D)}, and the other steps are routine.
\end{proof}

\begin{rmk}\label{rmk120810a}
In Theorem~\ref{thm120615a} (and its successors) we have more equivalent conditions,
but they become tedious to write down. For instance, the given conditions
are equivalent to the following:
\begin{enumerate}[\rm(i)]
\item[($\overline{\text{ii}}$)]
$C\sim C(\vf^n)$ for all $n>0$.
\end{enumerate}
Indeed, this condition clearly implies condition~\eqref{thm120712a2} from Theorem~\ref{thm120615a}.
And the proof of Theorem~\ref{thm120615a} shows that~\eqref{thm120712a1}$\implies$($\overline{\text{ii}}$).
One verifies similarly that the other conditions in Theorem~\ref{thm120615a} can be replaced with
``for all'' versions as well.
\end{rmk}

\begin{para}[Proof of Thoerem~\ref{cor120711a}] \label{para120927a}
Use Proposition~\ref{gcdimf=gcdimM}, Remark~\ref{disc120615b} and Theorem~\ref{thm120615a}.\qed
\end{para}

To state and prove results that allow us, for instance, to pass  to the completion, we introduce and briefly study
the following class of diagrams.

\begin{defn}\label{defn120712a}
Let $\vf\colon R\to R$ be a contracting endomorphism.
A commutative diagram
of local ring homomorphisms
\begin{equation}
\label{eq120712a}
\begin{split}
\xymatrix{
(R,\fm)\ar[r]^{\alpha}\ar[d]_{\vf}&(S,\fn)\ar[d]^{\psi}\\
({R},{\fm})\ar[r]^{{\alpha}}&({S},{\fn})
}
\end{split}
\end{equation}
is \emph{cows} if $S$ is \underline{co}mplete,
the map $\alpha$ is \underline{w}eakly regular,
and the map $R/\m\to S/\n$ induced by $\alpha\vf$ is \underline{s}eparable.
\end{defn}

\begin{rmk}\label{rmk120803a}
Let $\vf\colon R\to R$ be a contracting endomorphism.
One always has a trivial cows diagram~\eqref{eq120712a}: use the natural map
$\alpha\colon R\to \comp R$ and $\psi=\comp\vf$.
More interestingly, Proposition~\ref{prop120629a} shows that if 
the $R/\vf(\m)R$ is artinian and induced map $\ol\vf\colon k\to k$ is separable,
then there is a cows diagram~\eqref{eq120712a} such that
$\psi$ is  module-finite. (See also Lemma~\ref{lem120629b}.) Thus, conditions (ii') and (iii') in Theorem~\ref{thm120712a} say that
questions about $G_C$-dimensions (in the separable case) can be reduced to the
module-finite case, like reducing a Frobenius question to the $F$-finite case;
c.f. Theorem~\ref{thm120712a'}.

From another perspective, one reason to study cows diagrams is found in their similarity to Cohen factorizations:
when the map $\psi$  is module-finite, it
detects properties of $\vf$ like the surjective part $\vf'$ of a Cohen factorization for $\comp\vf$ or $\grave\vf$.
To see what we mean by this, recall that one point of considering $\vf'$ is given by the fact that many homological properties of
$\vf$ can be detected by $\vf'$. 
For instance, the map $\vf$ is quasi-Gorenstein if and only if $\vf'$ is quasi-Gorenstein.
We have seen similarly that many homological properties of
$\vf$ can be detected by $\psi$: e.g., under certain hypotheses, $\vf$ is quasi-Gorenstein if and only if $\psi$ is quasi-Gorenstein; see~\cite[Theorem B]{nasseh:cfwfa}.
\end{rmk}

\begin{lem}\label{lem120725b}
Every cows diagram~\eqref{eq120712a} gives rise to a commutative diagram
\begin{equation}
\label{eq120727a}
\begin{split}
\xymatrix{
R\ar[r]^-f\ar[d]_{\vf}
&\comp R \ar[r]^-{\alpha'}\ar[d]_{\comp \vf}
&S \ar[d]^{\psi}
\\
R\ar[r]^-f
&\comp R \ar[r]^-{\alpha'}
&S 
}
\end{split}
\end{equation}
of local ring homomorphisms such that the second square is cows and $\alpha=\alpha'f$ where $f\colon R\to\comp R$ is 
the natural map.
Conversely, given a cows diagram for $\comp\vf$ as in the second square of~\eqref{eq120727a},
the following diagram is cows:
\begin{equation}
\label{eq120727b}
\begin{split}
\xymatrix{
R\ar[r]^-{\alpha'f}\ar[d]_{\vf}
&S \ar[d]^{\psi}
\\
R\ar[r]^-{\alpha'f}
&S. 
}
\end{split}
\end{equation}
\end{lem}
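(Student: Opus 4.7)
The plan is to prove both directions by exploiting the universal property of the completion $f\colon R\to\comp R$. For the forward direction, starting from a cows diagram~\eqref{eq120712a}, I would first extend $\alpha\colon R\to S$ uniquely to a local homomorphism $\alpha'\colon \comp R\to S$ using the fact that $S$ is $\n$-adically complete and $\alpha$ is local with $\alpha(\m)\subseteq\n$; this gives $\alpha=\alpha'f$. Functoriality of completion makes the left square of~\eqref{eq120727a} commute tautologically. To see that the right square commutes, I would observe that $\psi\alpha'$ and $\alpha'\comp\vf$ are both continuous maps $\comp R\to S$ that agree on the dense subset $f(R)$ of $\comp R$ (since $\psi\alpha=\alpha\vf$), and hence coincide.

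Next, I would verify that the right square is a cows diagram. Completeness of $S$ is assumed. The induced map $R/\m\to S/\n$ arising from $\alpha'\comp\vf$ coincides, under the canonical isomorphism $R/\m\simeq\comp R/\m\comp R$, with the separable map induced by $\alpha\vf$. For weak regularity of $\alpha'$, the closed fibre $S/\alpha'(\m\comp R)S$ equals $S/\alpha(\m)S$, which is regular by weak regularity of $\alpha$; flatness of $\alpha'$ follows from a standard base-change argument using that $S$ is $R$-flat and $\n$-adically (hence $\m$-adically) complete, so the $R$-module structure on $S$ extends canonically to a flat $\comp R$-module structure, and this coincides with the one coming from $\alpha'$.

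For the converse, given a cows diagram for $\comp\vf$ as in the right square of~\eqref{eq120727a}, I would check that~\eqref{eq120727b} is cows. Commutativity follows by precomposing with $f$ and using $\comp\vf\circ f=f\circ\vf$. Weak regularity of $\alpha'f$ holds because flatness is closed under composition, and the closed fibre $S/(\alpha'f)(\m)S=S/\alpha'(\m\comp R)S$ coincides with the regular closed fibre of $\alpha'$. Separability of the induced residue field extension $R/\m\to S/\n$ again transfers across the identification $R/\m\simeq\comp R/\m\comp R$.

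The main obstacle is verifying flatness of $\alpha'$ in the forward direction; the rest is bookkeeping once the extension exists. One must carefully confirm that the $\comp R$-module structure on $S$ obtained from $\alpha'$ agrees with the one coming from base change along $f$, so that flatness is preserved. Apart from this, the argument is a sequence of routine checks that the defining conditions of a cows diagram are closed under passage between $\vf$ and $\comp\vf$ via the completion map.
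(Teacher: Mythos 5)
Your proposal follows essentially the same route as the paper: factor $\alpha$ through the completion, observe that commutativity and separability are automatic, and reduce the real content to showing that flatness of $\alpha = \alpha' f$ forces flatness of $\alpha'$. The one place where your argument is materially looser than the paper's is exactly that flatness step. You assert that ``the $R$-module structure on $S$ extends canonically to a flat $\comp R$-module structure,'' but flatness of this canonical extension is not automatic from $R$-flatness and $\m$-adic completeness; one must actually verify it, and the paper does so by the local flatness criterion: $\Tor_i^{\comp R}(\comp R/\m\comp R, S) \cong \Tor_i^R(R/\m, S) = 0$ for $i\geq 1$ (flat base change along $R\to\comp R$), combined with $\m$-adic separatedness of $S$. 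Your phrase ``standard base-change argument'' gestures at this but does not pin it down, and the isomorphism of $\Tor$'s together with a citation (André, Lemme II.57, or Avramov--Foxby 5.5(F)) is what makes the step rigorous. Similarly, your claim that $\n$-adic completeness implies $\m$-adic completeness of $S$ is true for Noetherian local rings but deserves a word: it relies on ideals being closed (Artin--Rees plus Krull intersection), so that a $J$-adic Cauchy sequence converging $\n$-adically actually converges $J$-adically. Apart from these two points --- which should be spelled out or cited --- your argument is sound and parallel to the paper's; the paper additionally tidies the bookkeeping by organizing the closed-fibre and residue-field checks into a single ``the second square of~\eqref{eq120727a} is cows iff~\eqref{eq120727b} is cows'' equivalence, which handles both directions at once.
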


\begin{proof}
Given a commutative diagram~\eqref{eq120712a},
since $S$ is complete the local homomorphism $\alpha$ factors through $\comp R$, so 
there is a local homomorphism $\alpha'$  making~\eqref{eq120727a} commute.
Conversely, given a commutative diagram as in the second square of~\eqref{eq120727a}, 
since the first square of~\eqref{eq120727a} commutes, it follows that
the diagram~\eqref{eq120727b} also commutes. Thus, it remains to show that
the second square of~\eqref{eq120727a} is cows if and only if~\eqref{eq120727b}
is cows. 

By construction the induced maps $R/\m\to S/\n$ and $\comp R/\m\comp R\to S/\n$ are the
same, so one is separable if and only the other is separable. Thus, it remains to show
that $\alpha'$ is weakly regular if and only if $\alpha'f$ is weakly regular.
Since $f$ is weakly regular and the composition of weakly regular maps is weakly regular,
one implication is routine. For the converse, assume that $\alpha'f$ is weakly regular.
Since $\alpha'$ and $\alpha'f$ have the same closed fibres, it suffices to show that
$\alpha'$ is flat. This follows from the sequence
$\Tor{i}{R/\m}{S}\cong\Tor[\comp R]{i}{\comp R/\m\comp R}{S}=0$
for $i\geq 1$; see~\cite[Lemme II.57]{andre:hac}
or~\cite[5.5 Proposition (F)]{avramov:hdouc}.
\end{proof}

\begin{thm}\label{thm120712a}
Let $\vf\colon R\to R$ be a 
contracting endomorphism, and let $C$ be a semidualizing $R$-complex.
Then the following conditions are equivalent:
\begin{enumerate}[\rm(i)]
\item\label{thm120712a1}
$C$ is a dualizing $R$-complex.
\item[\rm(i')]\label{prop120830a1'}
There is a cows diagram~\eqref{eq120712a} such that
$\Lotimes{S}C$ is dualizing for $S$.
\item\label{thm120712a2}
$\Lotimes{\comp R}C\sim (\Lotimes{\comp R}C)(\comp\vf^n)$ for some $n>0$.
\item[\rm(ii')]\label{thm120712a2'}
There is a cows diagram~\eqref{eq120712a} such that
$\Lotimes{S}C\sim (\Lotimes{S}C)(\psi^n)$ for some $n>0$.
\item\label{thm120712a3}
$\gcdim\vf^n<\infty$ and $\Lotimes{\comp R}C$ is derived  $(\Lotimes{\comp R}C)(\comp\vf^n)$-reflexive for some $n>0$.
\item[\rm(iii')]\label{thm120712a3'}
There is a cows diagram~\eqref{eq120712a} such that
$\gkdim{\Lotimes{S}C}\psi^n<\infty$ and
$\Lotimes{S}C$ is derived  $(\Lotimes{S}C)(\psi^n)$-reflexive for some $n>0$.
\item\label{thm120712a4}
$\gcdim\vf^n<\infty$ for infinitely many $n>0$.
\item[\rm(iv')]\label{prop120830a4'}
There is a cows diagram~\eqref{eq120712a} such that
$\gkdim{\Lotimes{S}C}\psi^n<\infty$ for infinitely many $n>0$.
\item[\rm(v')]\label{prop120830a5'} There is a cows diagram~\eqref{eq120712a} such that
$\gkdim{\Lotimes{S}C} \psi^n<\infty$  and such that $^n\!S\otimes^{\textbf{L}}_S \Rhom[S]{\Lotimes SC}{D^S}$
is derived  $\Rhom[S]{\Lotimes SC}{D^S}$-reflexive for some $n>0$, where $D^S$ is a dualizing $S$-complex.
\end{enumerate}
If $R$ has a dualizing complex $D$, then these conditions are equivalent to the following:
\begin{enumerate}[\rm(v)]
\item\label{thm120712a5} $\gcdim \vf^n<\infty$  and $^n\!R\otimes^{\textbf{L}}_R C^{\dagger_D}$
is derived  $C^{\dagger_D}$-reflexive for some $n>0$.
\end{enumerate}
\end{thm}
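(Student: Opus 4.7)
The plan splits into two stages: first establish the equivalence of the unprimed conditions by reducing to Theorem~\ref{thm120615a} applied to the completion, and second connect each primed condition to its unprimed counterpart via cows diagrams and the factorized-pushout machinery of Section~\ref{sec120830a}.

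\textbf{Stage 1 (unprimed equivalences).} Since $\comp R$ is complete, $\comp\vf^n$ admits a Cohen factorization for each $n\geq 1$, so Theorem~\ref{thm120615a} applies to the triple $(\comp R,\comp\vf,\Lotimes{\comp R}C)$. I match each unprimed condition of Theorem~\ref{thm120712a} against the corresponding condition of Theorem~\ref{thm120615a} via three identifications: $C$ is dualizing for $R$ iff $\Lotimes{\comp R}C$ is dualizing for $\comp R$, by Fact~\ref{semi-dual complex base change} applied to $R\to\comp R$ (flat with regular closed fibre); $\gcdim\vf^n<\infty$ iff $\gkdim{\Lotimes{\comp R}C}\comp\vf^n<\infty$, by Fact~\ref{1}\eqref{item120924a}; and the $\sim$-relation and reflexivity conditions in~\eqref{thm120712a2} and~\eqref{thm120712a3} are already stated over $\comp R$ and thus coincide with the corresponding conditions of Theorem~\ref{thm120615a}. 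When $R$ has a dualizing complex $D$, so does $\comp R$ (namely $\Lotimes{\comp R}D$), and condition~\eqref{thm120712a5} transfers via faithfully flat descent along $R\to\comp R$.

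\textbf{Stage 2 (primed versus unprimed).} Lemma~\ref{lem120725b} lets me pass between a cows diagram for $\vf$ and one for $\comp\vf$, reducing to $R=\comp R$. Fix a cows diagram~\eqref{eq120712a} with $R$ complete. Since $R$ and $S$ are complete, for each $n\geq 1$ both $\vf^n$ and $\psi^n$ admit minimal regular factorizations, and Remark~\ref{disc120615a}(1) produces a factorized pushout diagram~\eqref{eq120823a} with $\wti R=R$, $\wti S=S$, $\wti\alpha=\alpha$ realizing $\vf^n$ on top and $\psi^n$ on the bottom. The equivalences now read off one lemma at a time: (i)$\iff$(i') from Fact~\ref{semi-dual complex base change} applied to $\alpha$; (ii)$\iff$(ii') from Lemma~\ref{lem120615az}; (iv)$\iff$(iv') from Lemma~\ref{lem120517a}\eqref{lem120517a1}; and (iii)$\iff$(iii') from Lemmas~\ref{lem120517a}\eqref{lem120517a1} and~\ref{lem120725a} combined. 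For (v')$\iff$(iv') I apply the (iv)$\iff$(v) equivalence of Theorem~\ref{thm120615a} to the triple $(S,\psi,\Lotimes SC)$ with dualizing complex $D^S$, which exists by Fact~\ref{admitting dualizing complex} since $S$ is complete.

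\textbf{Main obstacle.} The delicate step is the iteration: a cows diagram for $\vf$ only guarantees separability of the residue-field map induced by $\alpha\vf$, while Remark~\ref{disc120615a}(1) invokes~\cite{nasseh:cfwfa} and so needs separability of $\ol{\alpha\vf^n}$ for each $n$. In the trivial cows diagram of Remark~\ref{rmk120803a} the map $\ol\alpha$ is the identity, so the cows hypothesis already forces $\ol\vf$ itself to be separable, and then $\ol{\alpha\vf^n}=\ol\alpha\circ\ol\vf^n$ is separable for every $n$ as a composite of separable field extensions; a similar phenomenon underlies the module-finite cows diagrams produced in Remark~\ref{rmk120803a}. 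Carefully tracking this separability through iteration, together with faithfully flat descent for derived reflexivity in Stage~1, is the main technical content; the rest is bookkeeping with the lemmas of Section~\ref{sec120830a}.
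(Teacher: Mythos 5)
Your proposal follows essentially the same route as the paper: pass to $\comp R$ and invoke Theorem~\ref{thm120615a} for the unprimed conditions, then link each primed condition to its unprimed counterpart by manufacturing factorized pushout diagrams via Remark~\ref{disc120615a}(1) and Lemma~\ref{lem120725b}, using Lemmas~\ref{lem120517a}, \ref{lem120615az}, and~\ref{lem120725a}. The translations you quote (Fact~\ref{semi-dual complex base change}, Fact~\ref{1}\eqref{item120924a}, and~\cite[Theorem (5.10)]{christensen:scatac} for derived reflexivity under faithfully flat base change) are exactly the ones the paper uses.

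What you flag as the main obstacle --- that the equivalences for the primed conditions silently require a factorized pushout diagram for the pair $(\vf^n,\psi^n)$ for each $n$, which via Remark~\ref{disc120615a}(1) needs separability of $\ol{\alpha\vf^n}$ and not merely of $\ol{\alpha\vf}$ --- is a genuine point that the paper's proof passes over without comment. However, your resolution of it is incomplete. You write $\ol{\alpha\vf^n}=\ol\alpha\circ\ol\vf^n$ and conclude separability ``as a composite of separable field extensions,'' but this requires $\ol\alpha$ to be separable, which the cows definition does not impose: your argument handles only the trivial cows diagram (where $\ol\alpha=\mathrm{id}$) and the ones from Proposition~\ref{prop120629a} (where $\ol\alpha=\beta$ is separable by construction), whereas the theorem quantifies over \emph{arbitrary} cows diagrams. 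The correct general argument is slightly different: given any cows diagram, $\ol{\alpha\vf}\colon k\to K$ separable means $K/\ol\alpha(\ol\vf(k))$ is separable; since $\ol\alpha(k)$ is an intermediate field and subextensions of separable extensions are separable, $\ol\alpha(k)/\ol\alpha(\ol\vf(k))$ is separable, i.e.\ $\ol\vf$ itself is separable. Then $\ol{\alpha\vf^n}=\ol{\alpha\vf}\circ\ol\vf^{\,n-1}$ is a composite of separable extensions, hence separable, with no assumption on $\ol\alpha$. With that small repair your argument is complete and matches the paper.
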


\begin{proof}
The equivalences (i)$\iff$(i') and (iv)$\iff$(iv') are from Fact~\ref{semi-dual complex base change} and
Lemma~\ref{lem120517a}\eqref{lem120517a1}.

For the rest of the proof, we consider two cases.

Case 1: $R$ is complete.
In this case, Theorem~\ref{thm120615a} shows that we need only prove the equivalences
(ii)$\iff$(ii'), (iii)$\iff$(iii'), and (v)$\iff$(v').
Consider a cows diagram~\eqref{eq120712a}.
Remark~\ref{disc120615a}(1) provides a a factorized pushout diagram~\eqref{eq120823a}
such that $\wti R=R$ and $\wti S=S$ and $\wti \alpha=\alpha$.
The equivalence (ii)$\iff$(ii') now  follows from Lemma~\ref{lem120615az},
and (iii)$\iff$(iii') follows from Lemmas~\ref{lem120517a}\eqref{lem120517a1}
and~\ref{lem120725a}.

For the equivalence (v)$\iff$(v') in this case,
since $R$ is complete, it has a dualizing complex $D$.
Using Lemma~\ref{lem120517a}\eqref{lem120517a1} again, we see that $\gkdim{\Lotimes{S}C} \psi^n<\infty$ if and only if $\gcdim \vf^n<\infty$.
Assume for the remainder of this paragraph that $\gkdim{\Lotimes{S}C} \psi^n<\infty$.
Since $\alpha$ is flat, there are isomorphisms in $\catd(S)$
\begin{align*}
\Rhom[S]{\Lotimes SC}{\Lotimes SD}&\simeq \Lotimes{S}{C^{\dagger_D}}
\\
^n\!S\otimes^{\textbf{L}}_S \Rhom[S]{\Lotimes SC}{\Lotimes SD}&\simeq\Lotimes{S}{(^n\!R\otimes^{\textbf{L}}_R C^{\dagger_D})}.
\end{align*}
Thus,  $^n\!S\otimes^{\textbf{L}}_S \Rhom[S]{\Lotimes SC}{\Lotimes SD}$ is derived $\Rhom[S]{\Lotimes SC}{\Lotimes SD}$-reflexive
if and only if 
$^n\!R\otimes^{\textbf{L}}_R C^{\dagger_D}$ is derived $C^{\dagger_D}$-reflexive, by~\cite[(5.10) Theorem]{christensen:scatac}.

Case 2: the general case.
Fact~\ref{semi-dual complex base change} shows that (i) is equivalent to
\begin{enumerate}[\rm(1)]
\item
$\Lotimes{\comp R}C$ is a dualizing $\comp R$-complex.
\end{enumerate}

From Fact~\ref{1} we see that conditions (iii) and (iv) are equivalent (respectively) 
to the following:

\begin{enumerate}[\rm(3)]
\item
$\gkdim{\Lotimes{\comp R}C}\comp\vf^n<\infty$ and $\Lotimes{\comp R}C$ is derived  $(\Lotimes{\comp R}C)(\comp\vf^n)$-reflexive for some $n>0$.
\item[(4)]
$\gkdim{\Lotimes{\comp R}C}\comp\vf^n<\infty$ for infinitely many $n>0$.
\end{enumerate}

Claim: Condition (ii') is equivalent to the following:

\begin{enumerate}[\rm(2')]
\item
There is a cows diagram
\begin{equation}
\label{eq120725a}
\begin{split}
\xymatrix{
\comp R\ar[r]^{\beta}\ar[d]_{\comp\vf}&S\ar[d]^{\psi}\\
\comp R\ar[r]^{\beta}&S
}
\end{split}
\end{equation}
such that
$\Lotimes[\comp R]{S}{(\Lotimes{\comp R}C)}\sim (\Lotimes[\comp R]{S}{(\Lotimes{\comp R}C)})(\psi^n)$ for some $n>0$.
\end{enumerate}

In light of Lemma~\ref{lem120725b}, this follows from the isomorphisms
\begin{align*}
(\Lotimes[\comp R]{S}{(\Lotimes{\comp R}C)})(\psi^n)&\simeq(\Lotimes{S}C)(\psi^n)
&&&\Lotimes[\comp R]{S}{(\Lotimes{\comp R}C)}&\simeq\Lotimes{S}C.
\end{align*}

Similar reasoning shows that conditions (iii') and (v') are equivalent (respectively) to the following:

\begin{enumerate}[\rm(3')]
\item
There is a cows diagram~\eqref{eq120725a} such that
$\gkdim{\Lotimes{S}C}\psi^n<\infty$ and 
$\Lotimes[\comp R]{S}{(\Lotimes{\comp R}C)}$
is derived $(\Lotimes[\comp R]{S}{(\Lotimes{\comp R}C)})(\psi^n)$-reflexive for some $n>0$.
\item[(5')]
There is a cows diagram~\eqref{eq120725a} such that
$\gkdim{\Lotimes{S}C} \psi^n<\infty$  and such that $^n\!S\otimes^{\textbf{L}}_S \Rhom[S]{\Lotimes[\comp R]{S}{(\Lotimes{\comp R}C)}}{D^S}$
is derived  $\Rhom[S]{\Lotimes[\comp R]{S}{(\Lotimes{\comp R}C)}}{D^S}$-reflexive for some $n>0$, where $D^S$ is a dualizing $S$-complex.
\end{enumerate}

Claim: if $R$ has a dualizing complex, then  condition (v) is equivalent to:
\begin{enumerate}[\rm(5)]
\item
$\gkdim{\Lotimes{\comp R}{C}} \comp\vf^n<\infty$  and the complex $\Lotimes[\comp R]{^n\!\comp R}{\Rhom[\comp R]{\Lotimes{\comp R} C}{D^{\comp R}}}$
is derived  $\Rhom[\comp R]{\Lotimes{\comp R} C}{D^{\comp R}}$-reflexive for some $n>0$
where $D^{\comp R}$ is dualizing for $\comp R$.
\end{enumerate}
Fact~\ref{1} implies that $\gkdim{\Lotimes{\comp R}{C}} \comp\vf^n<\infty$
if and only if $\gcdim\vf^n<\infty$.
From Fact~\ref{semi-dual complex base change} we know that
$\Lotimes{\comp R}{D}$ is dualizing for $\comp R$,
so we have $\Lotimes{\comp R}{D}\sim D^{\comp R}$.
The complex $\Lotimes{\comp R}{C}$ is semidualizing for $\comp R$,
hence so is 
\begin{align*}
\Rhom[\comp R]{\Lotimes{\comp R} C}{D^{\comp R}}
&\sim\Rhom[\comp R]{\Lotimes{\comp R} C}{\Lotimes{\comp R}{D}}
\simeq\Lotimes{\comp R}{C^{\dagger_D}}.
\end{align*}
In $\catd(\comp R)$ we have
\begin{align*}
\Lotimes[\comp R]{^n\!\comp R}{\Rhom[\comp R]{\Lotimes{\comp R} C}{D^{\comp R}}}
&\sim
\Lotimes[\comp R]{{}^n\!\comp R}{(\Lotimes{\comp R}{C^{\dagger_D})}}
\sim
\Lotimes{\comp R}{(\Lotimes{^n\!R}{C^{\dagger_D}})}
\end{align*}
Thus, the $\comp R$-complex
$\Lotimes[\comp R]{^n\!\comp R}{\Rhom[\comp R]{\Lotimes{\comp R} C}{D^{\comp R}}}$
is derived  $\Rhom[\comp R]{\Lotimes{\comp R} C}{D^{\comp R}}$-reflexive
if and only if
$\Lotimes{\comp R}{(\Lotimes{^n\!R}{C^{\dagger_D}})}$
is derived  $\Lotimes{\comp R}{C^{\dagger_D}}$-reflexive;
by~\cite[(5.10) Theorem]{christensen:scatac}, this second condition occurs if and only if
$\Lotimes{^n\!R}{C^{\dagger_D}}$
is derived  $C^{\dagger_D}$-reflexive.
This completes the proof of the claim.

By Case 1, conditions (1), (2), (2'), (3), (3'), (4), (5') and (5) are equivalent.
Thus, the corresponding conditions (i), (ii), etc. are equivalent.
\end{proof}

\begin{rmk}\label{rmk120906a}
As in Remark~\ref{rmk120810a}, we note here that
in Theorem~\ref{thm120712a} (and subsequent results) we have more equivalent conditions. For instance, the given conditions
are equivalent to the following:
\begin{enumerate}[\rm(i)]
\item[($\overline{\text{ii'}}$)]
For every cows diagram~\eqref{eq120712a}, we have
$\Lotimes{S}C\sim (\Lotimes{S}C)(\psi^n)$ for all $n>0$.
\end{enumerate}
\end{rmk}

Next, we consider versions of Theorems~\ref{thm120615a} and~\ref{thm120712a}
using Bass class conditions. A tool for this is the following generalization of~\cite[Theorem A]{nasseh:cmfpd} for complexes.

\begin{lem}\label{lem120808a}
Let $R\to S$ be a local ring homomorphism, and let $M$ be a homologically finite  
$S$-complex. Assume that $\vf\colon R\to R$ is a contracting endomorphism. 
Assume that there are infinitely many $n\in\bbn$ such that
there is an integer $t_n>\sup(M)$ such that
$\Ext iM{{}^n\!R}=0$ for $t_n\leq i\leq t_n+\depth(R)$. Then $\pd_R(M)<\infty$.
\end{lem}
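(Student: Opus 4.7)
The plan is to reduce the lemma to its module-case analogue, which is essentially~\cite[Theorem A]{nasseh:cmfpd}, via a syzygy argument. I would argue by contrapositive: assume $\pd_R(M)=\infty$ and derive a contradiction to the $\Ext{}{}{}$-vanishing hypothesis.

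First, I would replace $M$ by a syzygy. Fix a semi-free resolution $F\xra{\simeq}M$ over $R$ and, for an integer $s$ with $\sup(M)<s<t_n$ for (infinitely many of) the $n$ in the hypothesis, set $N:=\coker(\partial^F_{s+1})$, a representative $s$-th syzygy of $M$ over $R$. Standard dimension shifting yields $\Ext{i}{N}{X}\cong\Ext{i+s}{M}{X}$ for every $R$-module $X$ and every $i\geq 1$. The condition $t_n>\sup(M)$ guarantees that the hypothesis then translates to $\Ext{j}{N}{{}^n\!R}=0$ for $j$ in an interval of length $\depth(R)+1$, for infinitely many $n$, while $\pd_R(N)=\pd_R(M)-s=\infty$ as well. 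If $N$ were a finitely generated $R$-module, one would conclude immediately by applying~\cite[Theorem A]{nasseh:cmfpd} to $N$; that module-case statement in turn rests on the detection theorem of Avramov--Iyengar--Miller~\cite{avramov:holh}, which shows precisely that a contracting $\vf$ allows the modules ${}^n\!R$ to probe the $(\depth R+1)$-term vanishing behaviour of $\Ext{*}{N}{-}$ and thereby force finite projective dimension.

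The main obstacle is that $M$ is finitely generated only over $S$, so if $R\to S$ is not module-finite, the syzygy $N$ need not be a finitely generated $R$-module, and a direct invocation of the module case fails. I would circumvent this by first passing through a Cohen factorization $\widehat R\xra{\dot{\grave\vf}} R'\xra{\vf'}\widehat S$ of the semi-completion of $R\to S$: the complex $\widehat S\lotimes_S M$ is homologically finite over $R'$ (because $\widehat R\to R'$ is flat and $R'\to\widehat S$ is surjective, so $\widehat S$ is module-finite over $R'$), while faithful flatness of $R\to\widehat R$ and of $\widehat R\to R'$ allows the $\Ext{}{}{}$-vanishing hypothesis to be pushed to the $R'$-level, with ${}^n\!R$ replaced by $R'\lotimes_R{}^n\!R$; the latter is essentially ${}^n\!R'$ for the induced contracting endomorphism on $R'$ (extended from $\vf$ through $\widehat R$ and lifted, as in Section~\ref{sec120830a}). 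This puts us in the module-finite regime where the syzygy reduction produces a finitely generated $R'$-module of infinite $R'$-projective dimension satisfying the hypothesis of~\cite[Theorem A]{nasseh:cmfpd}, and the contradiction follows, noting that $\pd_{R'}(R'\lotimes_R N)<\infty$ is equivalent to $\pd_R(N)<\infty$ by faithfully flat descent.
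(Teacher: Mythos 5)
Your overall strategy---truncate to a syzygy module and reduce to the module case of \cite[Theorem A]{nasseh:cmfpd}---matches the paper's, and your dimension shifting over $R$ (using a semi-free $R$-resolution and a hard truncation above $\sup(M)$) is correct. But the repair you propose for the genuine difficulty, namely that an $R$-syzygy of $M$ need not be finitely generated over $R$, does not work, and this is where the argument breaks. The ring $R'$ in a Cohen factorization $\comp R\to R'\to\comp S$ of the semi-completion of $R\to S$ is constructed from the map $R\to S$ alone and carries no endomorphism induced by $\vf$: the contracting endomorphism lives on $R$, and nothing lifts it to $R'$. The liftings produced in Section~\ref{sec120830a} and Appendix~\ref{sec120627a} require weakly \'etale and separability hypotheses and concern Gorenstein factorizations of $\vf$ itself, not of $\alpha\colon R\to S$. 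Even granting some endomorphism of $R'$, the identification $\Lotimes{R'}{{}^n\!R}\simeq{}^n\!R'$ would require the square formed by $\vf^n$ and $R\to R'$ to be a pushout in the sense of Definition~\ref{defn120924a}, which is not available here. There is a second, independent problem: transporting the hypothesis $\Ext iM{{}^n\!R}=0$ to the $R'$-level needs a flat base change isomorphism for $\Ext_R^*(M,-)$, and that isomorphism requires $M$ to admit a degree-wise finite free resolution over $R$ --- precisely what fails when $R\to S$ is not module-finite.

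The paper sidesteps all of this by taking the syzygy over $S$ rather than over $R$: with $F$ a degree-wise finite $S$-free resolution of $M$ and $j=\sup(M)$, the module $M'=\coker(\partial^F_{j+1})$ is a finitely generated $S$-module, the Ext-vanishing hypothesis passes to $M'$ with $t_n'=t_n-j>0$, and one then invokes the proof of \cite[Theorem A]{nasseh:cmfpd}, which is already formulated for finitely generated modules over an $R$-algebra with the contracting endomorphism acting on $R$. No Cohen factorization or base change is needed, and $\pd_R(M')<\infty$ yields $\pd_R(M)<\infty$. If you want to salvage your version, replace your $R$-free resolution by an $S$-free one at the outset; the rest of your truncation argument then collapses onto the paper's.
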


\begin{proof}
Set $\depth(R) = d$, and
let $F$ be a degree-wise finite $S$-free resolution of $M$.
Set $j=\sup(M)$ and $M'=\coker(\partial^F_{j+1})$.
Then the complex
$$\cdots\to F_{j+1}\to F_j\to M'\to 0$$
is a degree-wise finite $S$-free resolution of $M'$.
It follows that for $i\geq j+1$ we have
\begin{align*}
\Ext {i-j}{M'}{{}^n\!R}\cong\Ext{i}{M}{{}^n\!R}.
\end{align*}
From our Ext-vanishing assumption, there are infinitely many $n\in\bbn$ such that
there is an integer $t_n'=t_n-j>0$ such that
$\Ext {i}{M'}{{}^n\!R}=0$ for $t_n'\leq i\leq t_n'+d$. 
By the proof of~\cite[Theorem A]{nasseh:cmfpd}, 
we conclude that $\pd_R(M')<\infty$, and it follows that $\pd_R(M)<\infty$.
\end{proof}

\begin{thm}\label{thm120808a}
Let $\vf\colon R\to R$ be a 
contracting endomorphism, and let $C$ be a semidualizing $R$-complex.
Then $C\sim R$ in $\catd(R)$ if and only if
$^n\!R\in\catbc(R)$ for infinitely many $n\geq 1$.
\end{thm}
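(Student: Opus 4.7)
The plan is to handle both directions separately. The forward direction is essentially immediate: if $C \sim R$, then $\Rhom CM \simeq \shift^{-s}M$ for some fixed $s$, from which one checks that $\catbc(R)$ coincides with the full subcategory of homologically bounded $R$-complexes; hence $^n\!R \in \catbc(R)$ for every $n \geq 1$ automatically.

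For the converse, I would leverage Lemma~\ref{lem120808a}. Unpacking the hypothesis $^n\!R \in \catbc(R)$ forces $\Rhom C{{}^n\!R}$ to be homologically bounded, so $\Ext iC{{}^n\!R}=0$ for all $i$ beyond some threshold $s_n$. For any $t_n > \max\{\sup(C), s_n\}$, the vanishing $\Ext iC{{}^n\!R} = 0$ persists throughout the range $t_n \leq i \leq t_n + \depth(R)$ demanded by Lemma~\ref{lem120808a}. Applying that lemma with $M = C$ (homologically finite since $C$ is semidualizing) and $S = R$ then yields $\pd_R(C) < \infty$.

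It remains to observe that a semidualizing $R$-complex of finite projective dimension is $\sim R$. I would derive this as follows: since $C$ is trivially derived $C$-reflexive with $\Rhom CC \simeq R$, the definition of $\gcdim_R$ gives $\gcdim_R(C) = \inf(C)$, and combining Fact~\ref{ABE formula} with the Auslander-Buchsbaum formula yields $\pd_R(C) = \inf(C)$. Shifting $C$ so that $\inf(C) = 0$, we find $C$ is quasi-isomorphic to a finitely generated free module $R^b$, and the semidualizing identity $R \simeq \Rhom CC \simeq R^{b^2}$ then forces $b = 1$. Hence $C \sim R$.

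The only substantive step is verifying the hypothesis of Lemma~\ref{lem120808a}, which is straightforward once one observes that Bass-class membership supplies Ext-vanishing in every sufficiently high degree --- much stronger than the finite-range vanishing the lemma actually requires. Everything afterward is standard rigidity of semidualizing complexes and does not involve the contracting endomorphism hypothesis, which enters only through the presence of $^n\!R$ in Lemma~\ref{lem120808a}.
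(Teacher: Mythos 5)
Your proposal is correct and follows essentially the same route as the paper's proof: observe that Bass-class membership forces the Ext-vanishing hypothesis of Lemma~\ref{lem120808a}, apply that lemma with $M=C$ and $S=R$ to get $\pd_R(C)<\infty$, and conclude $C\sim R$. The only difference is cosmetic: where the paper cites \cite[(8.1) Theorem]{christensen:scatac} for the last step, you rederive it via Fact~\ref{ABE formula} and Auslander--Buchsbaum, which is a valid (and standard) self-contained substitute.
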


\begin{proof}
The forward implication is straightforward since $\catb_R(R)$ contains all
$R$-modules. For the converse, assume that 
$^n\!R\in\catbc(R)$ for infinitely many $n\geq 1$.
In particular, there are infinitely many $n\in\bbn$ such that
$\Rhom{C}{{}^n\!R}$ is homologically bounded.
Hence, there are infinitely many $n\in\bbn$ such that
there is an integer $t_n>\sup(C)$ such that
$\Ext iC{{}^n\!R}=0$ for $t_n\leq i\leq t_n+\depth(R)$. 
Lemma~\ref{lem120808a} implies that $\pd_R(C)<\infty$,
so $C\sim R$ by~\cite[(8.1) Theorem]{christensen:scatac}.
\end{proof}

The next three lemmas are for use in the Bass class version of Theorem~\ref{thm120712a};
see Theorem~\ref{thm120808b} below.

\begin{lem}\label{prop120906a}
Let $R\xra\vf R_1$ be a ring homomorphism, and let $C$ be a semidualizing $R$-complex.
Let $L$ and $N$  be $R_1$-complexes such that $\fd_{R_1}(L)<\infty$.
If $N\in\catbc(R)$, then $\Lotimes[R_1]LN\in\catbc(R)$; the converse holds when $L$
is a faithfully flat $R_1$-module.
\end{lem}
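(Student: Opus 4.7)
The plan is to verify the three defining conditions of membership in $\catbc(R)$ for $\Lotimes[R_1]LN$ directly, reducing each one to the corresponding condition for $N$ via a tensor-evaluation isomorphism. The linchpin of the argument will be a natural isomorphism
\begin{equation*}
\omega\colon \Lotimes[R_1]L{\Rhom CN}\xra{\simeq} \Rhom C{\Lotimes[R_1]LN}
\end{equation*}
in $\catd(R_1)$, valid whenever $C$ is homologically finite over $R$ and $L$ has finite flat dimension over $R_1$; this is the standard tensor-evaluation morphism (see, e.g., \cite[4.4 Lemma]{avramov:hdouc} and surrounding discussion). Granted this, the rest is bookkeeping.

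For the forward implication, assume $N\in\catbc(R)$. Since $\fd_{R_1}(L)<\infty$ and $N$ is homologically bounded, $\Lotimes[R_1]LN$ is homologically bounded. Because $\Rhom CN$ is homologically bounded, the isomorphism $\omega$ shows that $\Rhom C{\Lotimes[R_1]LN}$ is homologically bounded as well. It remains to check that $\xi^C_{\Lotimes[R_1]LN}$ is an isomorphism, and for this the plan is to identify it under $\omega$ with $\Lotimes[R_1]L{\xi^C_N}$. Concretely, one chases the diagram
\begin{equation*}
\xymatrix{
\Lotimes{C}{\Rhom C{\Lotimes[R_1]LN}} \ar[r]^-{\xi^C_{\Lotimes[R_1]LN}} & \Lotimes[R_1]LN \\
\Lotimes{C}{(\Lotimes[R_1]L{\Rhom CN})} \ar[u]^-{\simeq}_-{\Lotimes C{\omega}} \ar[r]^-{\simeq} & \Lotimes[R_1]L{(\Lotimes C{\Rhom CN})} \ar[u]_-{\Lotimes[R_1]L{\xi^C_N}}
}
\end{equation*}
whose bottom horizontal arrow is the associativity isomorphism; commutativity follows from naturality of $\xi$ and standard formal properties of tensor-hom evaluation. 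Since $\xi^C_N$ is an isomorphism and $\Lotimes[R_1]L-$ preserves isomorphisms, $\xi^C_{\Lotimes[R_1]LN}$ is an isomorphism, as required.

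For the converse, assume $L$ is a faithfully flat $R_1$-module and $\Lotimes[R_1]LN\in\catbc(R)$. Faithful flatness detects both homological boundedness and isomorphisms. Explicitly, $\Lotimes[R_1]LN$ being homologically bounded yields $\HH_i(N)=0$ for $|i|\gg 0$ since $\HH_i(\Lotimes[R_1]LN)\cong L\otimes_{R_1}\HH_i(N)$. The isomorphism $\omega$ together with the homological boundedness of $\Rhom C{\Lotimes[R_1]LN}$ forces $\Rhom CN$ to be homologically bounded, by the same faithfully flat descent on each homology module. Finally, the commutative diagram displayed above shows that $\Lotimes[R_1]L{\xi^C_N}$ is an isomorphism, so $\xi^C_N$ is itself an isomorphism by faithful flatness. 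Hence $N\in\catbc(R)$.

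The main obstacle is establishing the tensor-evaluation isomorphism $\omega$ with enough naturality to make the diagram above commute; once this formal step is in place, both implications reduce to the standard principle that $\Lotimes[R_1]L-$ preserves (and, when $L$ is faithfully flat, reflects) the three conditions defining $\catbc(R)$.
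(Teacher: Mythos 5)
Your proposal is correct and takes essentially the same approach as the paper: both arguments hinge on the tensor-evaluation isomorphism $\Lotimes[R_1]{\Rhom CN}{L}\simeq\Rhom{C}{\Lotimes[R_1]{N}{L}}$ from \cite[4.4 Lemma]{avramov:hdouc} and the commutative square identifying $\xi^C_{\Lotimes[R_1]{N}{L}}$ with $\Lotimes[R_1]{\xi^C_N}{L}$, with faithful flatness yielding the converse.
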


\begin{proof}
Since $L$ has finite flat dimension over $R_1$, tensor evaluation~\cite[4.4 Lemma]{avramov:hdouc} provides
the  isomorphism
$\Lotimes[R_1]{\Rhom CN}{L}\xra\simeq\Rhom{C}{\Lotimes[R_1]{N}{L}}$.
Thus, if $\Rhom CN$ is homologically bounded, then so is $\Rhom{C}{\Lotimes[R_1]{N}{L}}$; and the converse holds 
when $L$
is a faithfully flat $R_1$-module.

Next, consider the commutative diagram wherein the upper horizontal isomorphism is from the previous paragraph:
$$\xymatrix{
\Lotimes{C}{(\Lotimes[R_1]{\Rhom CN}{L})}\ar[r]^-{\simeq}\ar[d]_-{\simeq}
& \Lotimes{C}{\Rhom{C}{\Lotimes[R_1]{N}{L}}}\ar[d]^-{\xi^C_{\Lotimes[R_1]{N}{L}}} \\
\Lotimes[R_1]{(\Lotimes C{\Rhom CN})}{L}\ar[r]^-{\Lotimes[R_1]{\xi^C_N}{L}}
&\Lotimes[R_1]{N}{L}.
}$$
From this, we conclude that $\Lotimes[R_1]{\xi^C_N}{L}$ is an isomorphism
if and only if $\xi^C_{\Lotimes[R_1]{N}{L}}$ is an isomorphism. Thus, 
if $\xi^C_N$ is an isomorphism (hence $\Lotimes[R_1]{\xi^C_N}{L}$ is an isomorphism),
then so is $\xi^C_{\Lotimes[R_1]{N}{L}}$.
When $L$ is a faithfully flat $R_1$-module and $\xi^C_{\Lotimes[R_1]{N}{L}}$ is an isomorphism, then $\Lotimes[R_1]{\xi^C_N}{L}$ is an isomorphism,
so faithful flatness implies that $\xi^C_N$ is an isomorphism.
\end{proof}

\begin{lem}\label{lem120810a}
Let $R\xra\vf R_1\xra\alpha S$ be  ring homomorphisms, and let $C$ be a semidualizing $R$-complex.
Assume that $\alpha$ is flat. If $R_1\in\catbc(R)$, then $S\in\catbc(R)$; the converse holds when $\alpha$
is faithfully flat, e.g., when $\alpha$ is local.
\end{lem}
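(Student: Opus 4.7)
The plan is to deduce this lemma as a direct specialization of Lemma~\ref{prop120906a}. In the notation of that lemma, I take $N = R_1$ (viewed as an $R_1$-complex in the obvious way) and $L = S$ (viewed as an $R_1$-complex via $\alpha$). Since $\alpha$ is flat by hypothesis, the flat dimension $\fd_{R_1}(S)$ equals $0$, so in particular is finite, which is exactly the standing hypothesis of Lemma~\ref{prop120906a}.

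Next I observe the obvious isomorphism $\Lotimes[R_1]{S}{R_1}\simeq S$ in $\catd(R)$. For the forward direction, assume $R_1\in\catbc(R)$. Then Lemma~\ref{prop120906a} gives $\Lotimes[R_1]{S}{R_1}\in\catbc(R)$, and via the isomorphism just noted this says $S\in\catbc(R)$, as desired.

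For the converse, suppose that $\alpha$ is faithfully flat. Then the converse clause of Lemma~\ref{prop120906a} applies (with $L = S$, which is now a faithfully flat $R_1$-module), so from $S\simeq\Lotimes[R_1]{S}{R_1}\in\catbc(R)$ we conclude $R_1\in\catbc(R)$. To finish the statement, I need the standard fact that a flat local homomorphism $\alpha\colon (R_1,\fn_1)\to(S,\fn)$ is automatically faithfully flat: locality gives $\alpha(\fn_1)\subseteq\fn$, so $\fn_1 S\neq S$, which together with flatness yields faithful flatness. This is the only mildly nontrivial point, and it is completely standard; there is no real obstacle in the argument, since the essential work has already been done in Lemma~\ref{prop120906a}.
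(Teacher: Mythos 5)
Your proposal is correct and follows essentially the same route as the paper: the paper's proof is the one-line instruction ``Use $N=R_1$ and $L=S$ in Lemma~\ref{prop120906a}.'' You have simply unwound that reduction, checked the hypothesis $\fd_{R_1}(S)<\infty$, used $\Lotimes[R_1]{S}{R_1}\simeq S$, and supplied the standard observation that flat local implies faithfully flat. All of this is accurate and matches the intended argument.
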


\begin{proof}
Use $N=R_1$ and $L=S$ in Lemma~\ref{prop120906a}.
\end{proof}

\begin{lem}\label{lem120906a}
For every cows diagram~\eqref{eq120712a} and every $n\in\bbn$, one has 
$^n\!R\in\catbc(R)$ if and only if 
$^n\!S\in\catb_{\Lotimes SC}(S)$.
\end{lem}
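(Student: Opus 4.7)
The plan is to split the desired equivalence into two steps that chain together, each reducing to machinery already developed in this section.

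The first step identifies the cows diagram, iterated to the $n$-th power, with the hypothesis of Lemma~\ref{lem120810a}. Specifically, the compatibility $\alpha\vf^n = \psi^n\alpha$ coming from~\eqref{eq120712a} allows one to read off a composition of ring maps $R \xra{\vf^n} {}^n\!R \xra{\alpha} {}^n\!S$, in which $\alpha$ is regarded as the ring map between the underlying rings of $^n\!R$ and $^n\!S$ (namely $R$ and $S$), and the compatibility ensures that this composition coincides with the $R$-algebra structure $\alpha\vf^n = \psi^n\alpha$ of $^n\!S$. Since $\alpha$ is weakly regular and local, it is faithfully flat, so Lemma~\ref{lem120810a} applied to this composition immediately yields
$${}^n\!R \in \catbc(R) \iff {}^n\!S \in \catbc(R).$$

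The second step is to show that for any $S$-complex $M$, one has $M \in \catbc(R)$ if and only if $M \in \catb_{\Lotimes{S}{C}}(S)$; applying this with $M = {}^n\!S$ and chaining with Step~1 then finishes the proof. For this I would use the tensor-hom adjunction along the flat ring map $\alpha$, giving a natural isomorphism $\Rhom[S]{\Lotimes{S}{C}}{M} \simeq \Rhom{C}{M}$ that matches the boundedness conditions in the two Bass class definitions. Using further the flat base-change identification $\Lotimes[S]{\Lotimes{S}{C}}{N} \simeq \Lotimes{C}{N}$ for any $S$-complex $N$, the two Bass class unit morphisms $\xi^{\Lotimes{S}{C}}_M$ and $\xi^C_M$ become identified under these isomorphisms, so one is an isomorphism precisely when the other is.

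The main obstacle lies in the second step: the bookkeeping required to verify that the natural isomorphisms listed above genuinely carry $\xi^{\Lotimes{S}{C}}_M$ to $\xi^C_M$, rather than merely matching sources and targets. This is a diagram chase using the units and counits of the tensor-hom adjunction $\Lotimes{S}{-} \dashv \operatorname{Res}_{\alpha}$ together with the flatness of $\alpha$. Once that compatibility is in hand, Step~1 is immediate from the cows hypotheses, and the lemma follows by concatenating the equivalences from the two steps.
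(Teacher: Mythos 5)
Your proof is correct and takes essentially the same route as the paper. Step~1 is identical to the paper's argument: the commutativity $\alpha\vf^n=\psi^n\alpha$ from the cows square lets one apply Lemma~\ref{lem120810a} to the factorization $R\xra{\vf^n}R\xra{\alpha}S$, with faithful flatness of $\alpha$ giving the two-way equivalence ${}^n\!R\in\catbc(R)\iff{}^n\!S\in\catbc(R)$. For Step~2 the paper simply cites \cite[(5.3.a) Proposition]{christensen:scatac}, which states precisely that for a (faithfully) flat local map $\alpha\colon R\to S$ and a semidualizing $R$-complex $C$, an $S$-complex lies in $\catbc(R)$ if and only if it lies in $\catb_{\Lotimes SC}(S)$. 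The tensor-hom adjunction and flat base-change identifications you sketch, together with the compatibility of the two unit morphisms, are exactly the content of that cited result; so the ``main obstacle'' you flag is real but already taken care of in the literature, and the cleanest finish is to invoke Christensen's proposition rather than re-derive it.
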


\begin{proof}
The cows diagram~\eqref{eq120712a} yields a commutative diagram
$$\xymatrix{
R\ar[r]^{\alpha}\ar[d]_{\vf^n}&S\ar[d]^{\psi^n}\\
R\ar[r]^{{\alpha}}&S.
}$$
Since $\alpha$ is faithfully flat, Lemma~\ref{lem120810a} shows that $^n\!R\in\catbc(R)$ if and only if 
$^n\!S\in\catbc(R)$, and~\cite[(5.3.a) Proposition]{christensen:scatac} shows that
$^n\!S\in\catbc(R)$ if and only if 
$^n\!S\in\catb_{\Lotimes SC}(S)$.
\end{proof}

\begin{thm}\label{thm120808b}
Let $\vf\colon R\to R$ be
a contracting endomorphism, and let $C$ be a semidualizing $R$-complex.
Then the following conditions are equivalent:
\begin{enumerate}[\rm(i)]
\item\label{thm120808b0}
$R$ is Gorenstein.
\item\label{thm120808b1}
$\gcdim\vf^m<\infty$ for all $m>0$, and $^n\!R\in\catbc(R)$ for all $n>0$.
\item[\rm(ii')]
For every cows diagram~\eqref{eq120712a}, one has 
$\gkdim{\Lotimes{S}C}\psi^m<\infty$ for all $m>0$, and $^n\!S\in\catb_{\Lotimes SC}(S)$ for all $n>0$.
\item\label{thm120808b2}
$\gcdim\vf^m<\infty$ for infinitely many $m>0$, and $^n\!R\in\catbc(R)$ for some $n>0$.
\item[\rm(iii')]
There is a cows diagram~\eqref{eq120712a} such that
$\gkdim{\Lotimes{S}C}\psi^m<\infty$ for infinitely many $m>0$, and $^n\!S\in\catb_{\Lotimes SC}(S)$ for some $n>0$.
\item\label{thm120808b3}
$\gcdim\vf^m<\infty$ for some $m>0$, and $^n\!R\in\catbc(R)$ for infinitely many $n>0$.
\item[\rm(iv')]
There is a cows diagram~\eqref{eq120712a} such that
$\gkdim{\Lotimes{S}C}\psi^m<\infty$ for some $m>0$, and $^n\!S\in\catb_{\Lotimes SC}(S)$ for infinitely many $n>0$.
\end{enumerate}
\end{thm}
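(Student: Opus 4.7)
The proof mirrors the template of Theorem~\ref{thm120712a}, reducing first to the complete case. The equivalences between each primed condition and its unprimed counterpart follow from the cows-diagram machinery: Lemma~\ref{lem120517a}\eqref{lem120517a1} transfers $\gcdim\vf^m<\infty$ to $\gkdim{\Lotimes SC}\psi^m<\infty$, while Lemma~\ref{lem120906a} (built on Lemmas~\ref{prop120906a} and~\ref{lem120810a}) transfers $^n\!R\in\catbc(R)$ to $^n\!S\in\catb_{\Lotimes SC}(S)$; the base case (i)$\iff$(i') is Fact~\ref{semi-dual complex base change}. It then suffices to prove that (i)--(iv) are equivalent, which I do by establishing (i)$\Rightarrow$(ii)$\Rightarrow$(iii), (ii)$\Rightarrow$(iv), together with the two reverse implications (iii)$\Rightarrow$(i) and (iv)$\Rightarrow$(i).

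For (i)$\Rightarrow$(ii), I use that over a Gorenstein local ring every semidualizing complex is $\sim R$, whence $\catbc(R)$ collapses to the class of all homologically bounded $R$-complexes and $^n\!R\in\catbc(R)$ holds trivially for every $n$; moreover, Fact~\ref{1}\eqref{item120924c} applied over the Gorenstein completion forces $\gcdim\vf^m<\infty$ for every $m$, since the relevant Auslander class $\cat{A}_{\Lotimes{\comp R}{C^{\dagger_D}}}(\comp R)$ collapses to all homologically bounded $\comp R$-complexes. The weakenings (ii)$\Rightarrow$(iii) and (ii)$\Rightarrow$(iv) are trivial. For (iv)$\Rightarrow$(i), Theorem~\ref{thm120808a} turns the Bass-class hypothesis for infinitely many $n$ into $C\sim R$, so $\gcdim\vf^m=\gdim\vf^m<\infty$ for some $m$; then $R(\vf^m)$ is semidualizing by Remark~\ref{disc120615b}, and the biduality morphism $R\to\Rhom{R(\vf^m)}{R(\vf^m)}$ is precisely the homothety $\chi^R_{R(\vf^m)}$ and hence an isomorphism. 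This places us in condition~(iii) of Theorem~\ref{thm120615a} with $C=R$, forcing $R$ to be Gorenstein.

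The hard part is (iii)$\Rightarrow$(i): Theorem~\ref{thm120615a} delivers only that $C$ is dualizing, and we must exploit the single Bass-class membership $^n\!R\in\catbc(R)$ to conclude $C\sim R$ so that the previously established (iv)$\Rightarrow$(i) applies. My plan is to iterate: fix a positive $m$ with $\gcdim\vf^m<\infty$ (available by the ``infinitely many'' clause), then use the composition law of Lemma~\ref{lem120620a}, Foxby equivalence (valid because $C$ is dualizing), and tensor-evaluation in the spirit of Lemma~\ref{prop120906a} to propagate the biduality isomorphism $\xi^C_{{}^n\!R}$ to $\xi^C_{{}^{n+km}\!R}$ for every $k\geq 0$. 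The principal technical difficulty is that $\vf^m$ need be neither flat nor module-finite, so the interplay of $\Rhom{C}{-}$ and $C\lotimes_R-$ with restriction along $\vf^m$ must be handled with care. Once infinitely many Bass-class memberships are secured, Theorem~\ref{thm120808a} yields $C\sim R$ and the argument closes.
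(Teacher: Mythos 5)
Your reduction to the complete case, your handling of the primed conditions via Lemma~\ref{lem120906a}, and your treatment of (i)$\implies$(ii), (ii)$\implies$(iii), (ii)$\implies$(iv), and (iv)$\implies$(i) are all serviceable and run roughly parallel to the paper's argument, though the paper is more economical in two places: for (i)$\implies$(ii) it simply cites Iyengar--Sather-Wagstaff's result that $R$ Gorenstein gives $\gdim\vf^m<\infty$ for all $m$, and for (iv)$\implies$(i) it cites the same result in the converse direction rather than routing back through condition~(iii) of Theorem~\ref{thm120615a} with $C=R$ (a detour that is correct but also depends on the reduction-to-complete step being in place, which you should make explicit).

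The real gap is in (iii)$\implies$(i), which you flag yourself as ``the hard part'' and leave as a plan rather than a proof. Your proposed iteration---propagating $\xi^C_{{}^n\!R}$ to $\xi^C_{{}^{n+km}\!R}$ via Lemma~\ref{lem120620a}, Foxby equivalence, and tensor evaluation---faces exactly the obstacle you name: $\vf^m$ is in general neither flat nor module-finite, so there is no obvious way to make $\Rhom C{-}$ and $C\lotimes_R{-}$ commute with restriction of scalars along $\vf^m$, and I do not see how the iteration closes. The paper takes a fundamentally different and shorter path that you have not found. After reducing to the complete case, it takes a Cohen factorization $R\xra{\dot\tau}R'\xra{\tau'}R$ of $\vf^n$, uses~\cite[(5.3.b)]{christensen:scatac} to pass from ${}^n\!R\in\catbc(R)$ to $R\in\catb_{\Lotimes{R'}{C}}(R')$, notes that $\Lotimes{R'}C$ is dualizing for $R'$ (since $C$ is dualizing and $\dot\tau$ is weakly regular), and then invokes the Christensen--Frankild--Holm characterization~\cite[4.4~Theorem]{christensen:ogpifd} to conclude $\text{G-}\id_{R'}(R)<\infty$. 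A cyclic module of finite Gorenstein injective dimension forces $R'$ to be Gorenstein by~\cite[Theorem~A]{foxby:cmfgidgr}, and then flat descent along $\dot\tau$ gives $R$ Gorenstein. You should replace your iteration sketch with this Bass class $\to$ G-injective dimension $\to$ Gorenstein argument; as written, your (iii)$\implies$(i) is not a proof.
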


\begin{proof}
The implications (ii)$\implies$(iii) and
(ii)$\implies$(iv) are trivial.
The equivalences (ii)$\iff$(ii'), (iii)$\iff$(iii'), and (iv)$\iff$(iv') follow from Lemma~\ref{lem120906a}.

(i)$\implies$(ii)
Assume that
$R$ is Gorenstein.
Then we know from~\cite[(8.6) Corollary]{christensen:scatac} that $C\sim R$,
so $\catbc(R)=\catb_R(R)$ contains every $R$-module, in particular
$^n\!R\in\catbc(R)$ for all $n>0$.
Also, since $R$ is Gorenstein,
we have $\gcdim\vf^m=\gdim\vf^m<\infty$ for all $m>0$
by~\cite[6.6.~Theorem]{iyengar:golh}.

(iii)$\implies$(i)
Assume that
$\gcdim\vf^m<\infty$ for infinitely many $m>0$, and $^n\!R\in\catbc(R)$ for some $n>0$.
Theorem~\ref{thm120712a} implies that $C$ is dualizing for $R$.

Case 1: $R$ is complete. In this case, $\vf^n$ has a Cohen factorization $R\xra{\dot\tau}R'\xra{\tau'}R$.
From~\cite[(5.3.b) Proposition]{christensen:scatac} the condition $^n\!R\in\catbc(R)$ implies that
$R\in\catb_{\Lotimes{R'}{C}}(R')$. As $C$ is dualizing for $R$ and $\dot\tau$ is weakly regular,
it follows that $\Lotimes{R'}{C}$ is dualizing for $R'$.
Because of~\cite[4.4.~Theorem]{christensen:ogpifd}, we conclude that
$\text{G-}\id_{R'}(R)<\infty$.\footnote{See~\cite{christensen:ogpifd,foxby:cmfgidgr} for background on G-injective dimension.}
In particular, the local ring $R'$ has a cyclic module of finite G-injective dimension,
so $R'$ is Gorenstein by~\cite[Theorem A]{foxby:cmfgidgr}. The fact that $\dot\tau$ is flat and local
implies that $R$ is Gorenstein.

Case 2: the general case. The ring $R$ is Gorenstein if and only if  $\comp R$ is Gorenstein. 
Since $\gcdim\vf^m<\infty$ for infinitely many $m>0$,
Fact~\ref{1} implies that $\gkdim{\Lotimes{\comp R}{C}}\comp\vf^m<\infty$ for infinitely many $m>0$.
By Case 1, it suffices to show that the assumption $^n\!R\in\catbc(R)$ implies that 
$^n\!\comp R\in\catb_{\Lotimes{\comp R}{C}}(\comp R)$.
Consider the commutative diagram of local ring homomorphisms
$$\xymatrix{
R\ar[r]\ar[d]_{\vf^n}
&\comp R\ar[d]^{\comp\vf^n}\\
R\ar[r]&\comp R}$$
where the unspecified maps are the natural ones. The assumption
$^n\!R\in\catbc(R)$ implies that 
$^n\!\comp R\in\catbc(R)$ by Lemma~\ref{lem120810a}.
From~\cite[(5.3.b) Proposition]{christensen:scatac} we conclude that
$^n\!\comp R\in\catb_{\Lotimes{\comp R}{C}}(\comp R)$, as desired.

(iv)$\implies$(i)
Assume that
$\gcdim\vf^m<\infty$ for some $m>0$, and $^n\!R\in\catbc(R)$ for infinitely many $n>0$.
Theorem~\ref{thm120808a} implies that $C\sim R$ in $\catd(R)$.
Thus, the assumption $\gcdim\vf^m<\infty$ translates to $\gdim\vf^m<\infty$,
and we conclude from~\cite[6.6.~Theorem]{iyengar:golh} that $R$ is Gorenstein.
\end{proof}

\section{Results Specific to the Frobenius Endomorphism}\label{sec120515a}

We begin this section with a combination of~\cite[Proposition 12.2.7]{avramov:holh} and~\cite[Proposition (0.10.3.1)]{grothendieck:ega3-1}.

\begin{lem}\label{Factorization of Frobenius}
Let $(R,\fm,k)$ be a  local ring of prime
characteristic $p>0$, and let $k\subseteq K$ be a field extension.
Then there is a commutative diagram
of local ring homomorphisms
$$\xymatrix{
&\comp R\ar[r]^{\alpha}\ar[ld]_{\dot{\varphi}_{\comp R}}\ar[dd]^<<<<<{f^n_{\comp R}}& S\ar[dd]_<<<<<{f^n_S}\ar[rd]^{\dot{\varphi}_S}&&\\
R'\ar[rd]_{\varphi'_{\comp R}}\ar[rrr]_-{\alpha'}&&&S'\ar[ld]^{\varphi'_S}&\\
&\comp R\ar[r]^{\alpha}&S.
}$$
such that
the maps $\alpha$ and $\alpha'$ are weakly \'etale, 
the rings $S$ and $S'$ are  complete,
the diagrams $\comp R\xra{\dot\vf_{\comp R}}R'\xra{\vf_{\comp R}'}\comp R$ and $S\xra{\dot\vf_S}S'\xra{\vf_S'}S$
are minimal Cohen factorizations of $f^n_{\comp R}$ and $f^n_S$, respectively,
the natural map $\Lotimes[R']{S'}{\comp R}\to S$ is an isomorphism, and the induced map $R/\m\to S/\m S$ is the given field extension $k\subseteq K$.
(In particular, this is a factorized pushout diagram.)
If $K$ is $F$-finite, then so are $S$ and $S'$.
\end{lem}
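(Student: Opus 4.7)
The plan is to build the diagram from two ingredients: Grothendieck's gonflement construction for the horizontal weakly \'etale extension, and the Avramov--Iyengar--Miller construction of Cohen factorizations of Frobenius iterates. I start by applying~\cite[Proposition (0.10.3.1)]{grothendieck:ega3-1} to $\comp R$ together with the field extension $k\subseteq K$ to produce a Noetherian local $\comp R$-algebra $T$ that is flat over $\comp R$, satisfies $\m_{\comp R}T=\m_T$, and has residue field $K$. Setting $S$ equal to the $\m_T$-adic completion of $T$, and $\alpha\colon\comp R\to S$ the induced local homomorphism, yields a complete local ring with residue field $K$ such that $\alpha$ is faithfully flat with $\m_{\comp R}S=\m_S$, hence weakly \'etale. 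The construction can moreover be arranged so that $S$ is $F$-finite whenever $K$ is.

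Next, I apply~\cite[Proposition 12.2.7]{avramov:holh} to produce a minimal Cohen factorization $\comp R\xra{\dot\vf_{\comp R}}R'\xra{\vf'_{\comp R}}\comp R$ of $f^n_{\comp R}$. The explicit form of this construction realizes $R'$ as a formal power series ring $\comp R[\![x_1,\ldots,x_e]\!]$ in which $\dot\vf_{\comp R}$ sends a minimal generating system of $\m$ into the new variables and $\vf'_{\comp R}$ sends each $x_i$ to the $p^n$-th power of the corresponding generator. I then set $S':=S[\![x_1,\ldots,x_e]\!]$ and define $\alpha'\colon R'\to S'$, $\dot\vf_S\colon S\to S'$, and $\vf'_S\colon S'\to S$ by the analogous formulas, transporting generators along $\alpha$. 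Because Frobenius commutes with every ring homomorphism, the identities $\vf'_S\dot\vf_S=f^n_S$ and $f^n_S\alpha=\alpha f^n_{\comp R}$ follow from their analogues over $\comp R$, giving commutativity of the full diagram. The map $\alpha'$ is weakly \'etale as the base change of $\alpha$ along the flat map $\dot\vf_{\comp R}$, and $\dot\vf_S$ is weakly regular as the base change of $\dot\vf_{\comp R}$ along $\alpha$.

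To verify that $S\to S'\to S$ is a \emph{minimal} Cohen factorization, I check that $\edim(\dot\vf_S)=\edim(\dot\vf_{\comp R})$, which holds because both embedding dimensions are controlled by the minimal number of generators of $\m$, a quantity preserved under the faithfully flat residue extension $\alpha$. The pushout identity $\Lotimes[R']{S'}{\comp R}\simeq S$ then follows from the fact that $\comp R\simeq R'/(x_1,\ldots,x_e)$ is cut out in $R'$ by a regular sequence (so the derived and ordinary tensor products coincide), together with the computation $S[\![\mathbf{x}]\!]/(\mathbf{x})\simeq S$. Finally, $F$-finiteness of $S$ propagates to $S'=S[\![\mathbf{x}]\!]$ because adjoining finitely many formal power-series variables preserves $F$-finiteness.

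The main obstacle I anticipate is the bookkeeping around completions: I must verify carefully that $S'=S[\![\mathbf{x}]\!]$ really is the correctly-completed base change of $R'$ along $\alpha$ (rather than the uncompleted tensor product $R'\otimes_{\comp R}S$), and that the base-changed factorization remains both Cohen and minimal after passing through the residue-field extension $k\subseteq K$. Once this is in place, the Frobenius-compatibility of all the maps and the required pushout identity follow essentially formally from the corresponding statements over $\comp R$.
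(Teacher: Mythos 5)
Your high-level strategy—construct $S$ by a gonflement/base-change of the residue field, then base change a minimal Cohen factorization of $f^n_{\comp R}$ along $\alpha\colon\comp R\to S$—is a reasonable plan, and in fact the paper's proof ends up with the same sort of explicit data (though it builds $S$ directly as $K[\![\x]\!]/(\f)$ from a Cohen presentation rather than invoking gonflement). The problem is that your description of the Avramov--Iyengar--Miller minimal Cohen factorization of Frobenius is wrong, and the error propagates through the rest of the argument.

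In \cite[Proposition 12.2.7]{avramov:holh}, writing $\comp R = k[\![\x]\!]/(\f)$, the intermediate ring is
\[
R' \;=\; k[\![\x]\!]/(\f^{[p^n]})\, [\![\y]\!],
\]
where $\f^{[p^n]}$ denotes the sequence obtained by raising \emph{coefficients} to the $p^n$-th power, and the weakly regular map $\dot\vf_{\comp R}\colon\comp R\to R'$ is the Frobenius twist $g+(\f)\mapsto g^{[p^n]}+(\f^{[p^n]})$, not the inclusion of $\comp R$ as coefficients and not a map ``sending generators of $\m$ into the new variables.'' This twist is essential: if you try to use $R'=\comp R[\![\y]\!]$ with $\dot\vf$ the inclusion and $\vf'$ defined by $\vf'|_{\comp R}=f^n_{\comp R}$, $\vf'(y_i)=x_i$, then $\vf'$ is not surjective whenever $k$ is imperfect --- its image has residue field $k^{p^n}\subsetneq k$, so this is not a Cohen factorization at all. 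And $k[\![\x]\!]/(\f^{[p^n]})$ is simply not isomorphic to $\comp R=k[\![\x]\!]/(\f)$ in general (the map $g\mapsto g^{[p^n]}$ is injective but not surjective on $k[\![\x]\!]$ when $k$ is imperfect), so the identification $R'\cong\comp R[\![\y]\!]$ fails.

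This breaks the two steps you build on top of it. First, $\alpha'\colon R'\to S'$ cannot be defined merely by ``transporting generators along $\alpha$,'' since the coefficient ring of $R'$ is $k[\![\x]\!]/(\f^{[p^n]})$, not $\comp R$; one must instead exhibit $\alpha'$ as the map $K\text{-coefficient extension}$ between the two explicitly presented rings, as the paper does. Second, your pushout verification rests on the claim $\comp R\simeq R'/(\text{new variables})$, but the $R'$-module structure on $\comp R$ in the definition of factorized pushout is via the surjective part $\vf'_{\comp R}$, whose kernel is not $(\y)$ --- it is generated by $x_1-y_1^{p^n},\ldots,x_e-y_e^{p^n}$ together with $\f$. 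The paper's proof therefore computes $\Ker(\vf'_{\comp R})$ and $\Ker(\vf'_S)$ explicitly and checks the latter is the extension of the former; this is the content you would need to supply. As written, the regular-sequence shortcut does not address the correct kernel, so the pushout isomorphism $\Lotimes[R']{S'}{\comp R}\simeq S$ is unjustified.

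The remaining pieces of your argument (gonflement producing a complete weakly \'etale $S$ with residue field $K$, $F$-finiteness of $K$ propagating to $S$ and $S'$, minimality of the Cohen factorization by matching embedding dimensions) are fine ideas, and with the correct presentation of $R'$ and $S'$ the proof can be completed along essentially these lines; but the misidentification of $R'$ is a genuine gap, not a bookkeeping issue.
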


\begin{proof}
\newcommand{\chr}{\operatorname{char}}
\newcommand{\ba}{\mathbf{a}}
This conclusion is unchanged if we replace $R$ by $\comp R$, so we assume  that $R$ is complete. 
By Cohen's Structure Theorem there exist integers $e,m\geq 0$ and elements
$f_1,\cdots,f_m\in k[\![x_1,\ldots,x_e]\!]$ such that
$R\cong k[\![x_1,\ldots,x_e]\!]/(f_1,\ldots,f_m)$, and
 the images of $x_1,\ldots,x_e$ in $R$ minimally generate $\m$. 
Set $\x=x_1,\ldots,x_e$, and use the notation
$\x^{\ba}=x_1^{a_1}\cdots x_e^{a_e}$ for all $\ba=(a_1,\ldots,a_e)\in\bbn^e$.
Also, set $\f=f_1,\cdots,f_m$.
We identify $R$ with the ring $k[\![\x]\!]/(\f)$ for the remainder of the proof.

Let $S=K[\![\x]\!]/(\f)$, and let $\alpha\colon R\to S$ be induced by the inclusion $k\subseteq K$. Then $S$ is a
complete local ring, and $\alpha$ makes $S$
into a  local $R$-algebra of characteristic $p$.
It is straightforward to show that the map
$k[\![\x]\!]\to K[\![\x]\!]$ induced by the inclusion
$k\subseteq K$ is flat (e.g., using~\cite[Exercise 22.3]{matsumura:crt}). Hence, $\alpha$ is flat by base-change.
Moreover, $\alpha$ is weakly \'etale
since the maximal ideal of $S$ is $(\x)S=\m S$ by construction.

We use the following notation of~\cite[Proposition 12.2.7]{avramov:holh}.
For  $g=\sum_{\ba\in\bbn^e}r_{\ba}\x^{\ba}$, set
$g^{[p^n]}=\sum_{\ba\in\bbn^e}r_{\ba}^{p^n}\x^{\ba}$.
Then a minimal Cohen factorization of $f_R^n$ is given by the following maps.
The weakly regular part is 
$\dot\vf_R\colon k[\![\x]\!]/(\f)\to k[\![\x]\!]/(\f^{[p^n]})[\![\y]\!]$
given by $\dot\vf_R(g+(\f)):=g^{[p^n]}+(\f^{[p^n]})$ where $\y=y_1,\ldots,y_e$ is another
list of variables.
The surjective part is the composition $\vf_R'=\phi''_R\rho_R$ where $\phi''_R$
and $\rho_R$ are defined next.
First, we have $\rho_R\colon k[\![\x]\!]/(\f^{[p^n]})[\![\y]\!]\to k[\![\x]\!]/(\f^{p^n})$
which leaves the elements of $k$ fixed and such that
$\rho_R(x_i+(\f^{[p^n]}))=x_i^{p^n}+(\f^{p^n})$
and 
$\rho_R(y_i+(\f^{[p^n]}))=x_i+(\f^{p^n})$.
Next, we have the natural surjection $\phi''_R\colon k[\![\x]\!]/(\f^{p^n})\to k[\![\x]\!]/(\f)$.
As is observed in the proof of~\cite[Proposition 12.2.7]{avramov:holh}, $\Ker(\rho_R)$
is generated by the sequence $x_1-y_1^{p^n},\ldots,x_e-y_e^{p^n}$.
The factorization of $f^n_S$ is defined similarly in terms of
$\dot\vf_S$, $\rho_S$, and $\phi''_S$.

This provides the outer edges of the following commutative diagram:
$$
\xymatrix@C=4mm{
&\!\!\!\!R=k[\![\x]\!]/(\f)\ar[r]^-{\alpha}\ar[ld]_{\dot\vf_R}\ar[ddd]^<<<<<{f^n_R}&
S=K[\![\x]\!]/(\f)\ar[ddd]^<<<<<{f^n_S}\ar[rd]^{\dot\vf_S}\\
R':=k[\![\x]\!]/(\f^{[p^n]})[\![\y]\!]\ar[d]_{\rho_R}\ar[rrr]^-{\alpha'}\ar[rdd]^<<<<<<<<<<<<{\vf_R'}&&&
S':=K[\![\x]\!]/(\f^{[p^n]})[\![\y]\!]\ar[d]^{\rho_S}\ar[ldd]_<<<<<<<<<<<<{\vf_S'}\\
k[\![\x]\!]/(\f^{p^n})\ar[dr]_{\phi''_R}\ar[rrr]^-{\alpha^*}&&& 
K[\![\x]\!]/(\f^{p^n})\ar[ld]^{\phi''_S}
\\
&\!\!\!\!R=k[\![\x]\!]/(\f)\ar[r]^-{\alpha}&S=K[\![\x]\!]/(\f).
}
$$
The maps $\alpha'$ and $\alpha^*$ are induced by the inclusion $k\subseteq K$.
Hence, they are weakly \'etale, by the same proof as
for $\alpha$.

Next, we verify the pushout condition.
Since $\phi''_R\rho_R$ and $\phi''_S\rho_S$ are surjective and $\alpha'$ is flat,
it suffices to show that $\Ker(\phi''_S\rho_S)$ is generated by the images in $K[\![\x]\!]/(\f^{[p^n]})[\![\y]\!]$
of the generators of $\Ker(\phi''_R\rho_R)$. From the above discussion, we know that
$\Ker(\rho_R)$
is generated by the sequence $x_1-y_1^{p^n},\ldots,x_e-y_e^{p^n}$.
Since $\rho_S$ is constructed exactly like $\rho_R$, we conclude that $\Ker(\rho_S)$
is generated by the sequence $x_1-y_1^{p^n},\ldots,x_e-y_e^{p^n}$.
Also, by construction, $\Ker(\phi''_R)$ and $\Ker(\phi''_S)$
are generated by the images of $\f$ in the respective rings.
Thus, $\Ker(\phi''_R\rho_R)$ and $\Ker(\phi''_S\rho_S)$ are both generated by
$x_1-y_1^{p^n},\ldots,x_e-y_e^{p^n},\f$
as desired.

Finally, if $K$ is $F$-finite, then $S$ and $S'$
are $F$-finite by~\cite[Corollary 2.6]{kunz:onrcp}. 
\end{proof}

The next result is like Theorem~\ref{thm120712a}.

\begin{thm}\label{thm120712a'}
Let $(R,\fm,k)$ be a  local ring of prime
characteristic $p>0$, and let $C$ be a semidualizing $R$-complex.
Then the following are equivalent:
\begin{enumerate}[\rm(i)]
\item\label{thm120712a'1}
$C$ is a dualizing $R$-complex.
\item[\rm(i')]\label{prop120910a1'}
There is a complete weakly \'etale $F$-finite local $R$-algebra $S$ such that
$\Lotimes{S}C$ is dualizing for $S$.
\item\label{thm120712a'2}
$\Lotimes{\comp R}C\sim (\Lotimes{\comp R}C)(\comp f_R^n)$ for some $n>0$.
\item[\rm(ii')]\label{thm120712a'2'}
There is a complete weakly \'etale $F$-finite local $R$-algebra $S$  such that
$\Lotimes{S}C\sim (\Lotimes{S}C)(f_S^n)$ for some $n>0$.

\item\label{thm120712a'3}
$\gcdim f_R^n<\infty$ and $\Lotimes{\comp R}C$ is derived  $(\Lotimes{\comp R}C)(f_{\comp R}^n)$-reflexive for some $n>0$.
\item[\rm(iii')]\label{thm120712a'3'}
There is a  complete weakly \'etale $F$-finite local $R$-algebra $S$  such that
for some $n>0$ we have $\gkdim{\Lotimes{S}C} f_S^n<\infty$ and 
$\Lotimes{S}C$ is derived  $(\Lotimes{S}C)(f_S^n)$-reflexive.
\item\label{thm120712a'4}
$\gcdim f_R^n<\infty$ for infinitely many $n>0$.
\item[\rm(iv')]\label{prop120910a4'}
There is a complete weakly \'etale $F$-finite local $R$-algebra $S$ such that  for infinitely many $n>0$ we have
$\gkdim{\Lotimes{S}C} f_S^n<\infty$.
\item[\rm(v')]\label{prop120910a5'} 
There is a complete weakly \'etale $F$-finite local $R$-algebra $S$  such that for some $n>0$ we have
$\gkdim{\Lotimes{S}C} f_S^n<\infty$  and $^n\!S\otimes^{\textbf{L}}_S \Rhom[S]{\Lotimes SC}{D^S}$
is derived  $\Rhom[S]{\Lotimes SC}{D^S}$-reflexive, where $D^S$ is a dualizing $S$-complex.
\end{enumerate}
If $R$ has a dualizing complex $D$, then these conditions are equivalent to the following:
\begin{enumerate}[\rm(v)]
\item\label{thm120712a'5} $\gcdim f_R^n<\infty$  and $^n\!R\otimes^{\textbf{L}}_R C^{\dagger_D}$
is derived  $C^{\dagger_D}$-reflexive for some $n>0$.
\end{enumerate}
\end{thm}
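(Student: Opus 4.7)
The plan is to mirror the proof of Theorem~\ref{thm120712a} applied to the Frobenius $\vf = f_R$, using Lemma~\ref{Factorization of Frobenius} in place of the cows-diagram constructions, with the added feature that the auxiliary ring $S$ can now be required to be $F$-finite. Since $f_R$ is automatically contracting (one has $f_R^n(\m) = \m^{[p^n]} \subseteq \m^2$ for all $n \gg 0$), Theorem~\ref{thm120712a} immediately delivers the equivalences among the unprimed conditions \eqref{thm120712a'1}, \eqref{thm120712a'2}, \eqref{thm120712a'3}, and \eqref{thm120712a'4}, together with \eqref{thm120712a'5} when $R$ admits a dualizing complex. What remains is to pair each primed condition with its unprimed counterpart.

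The equivalence between \eqref{thm120712a'1} and its primed version is the easiest: a weakly \'etale local map $\alpha\colon R \to S$ is flat with closed fibre $S/\m S$ a field, hence automatically Gorenstein, so Fact~\ref{semi-dual complex base change} gives that $\Lotimes S C$ is dualizing for $S$ if and only if $C$ is dualizing for $R$. For the forward direction of each remaining equivalence, I would apply Lemma~\ref{Factorization of Frobenius} with $K$ taken to be an $F$-finite extension of the residue field $k$; such a $K$ always exists, e.g., the perfect closure $k^{1/p^\infty}$, since any perfect field of characteristic $p$ satisfies $K = K^p$ and is therefore $F$-finite. The lemma produces a complete weakly \'etale $F$-finite local $R$-algebra $S$ along with a factorized pushout diagram in the sense of Definition~\ref{defn120924a}, with $\wti R = \comp R$, $\wti S = S$, and $\wti\alpha = \alpha$. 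Lemmas~\ref{lem120517a}\eqref{lem120517a1}, \ref{lem120615az}, and~\ref{lem120725a} then transfer, respectively, the finite $\gc$-dimension condition, the condition $\Lotimes{\comp R}C \sim (\Lotimes{\comp R}C)(\comp f_R^n)$, and the derived reflexivity condition between $\comp R$ and $S$; Fact~\ref{1}\eqref{item120924a} identifies $\gcdim f_R^n$ with $\gkdim{\Lotimes{\comp R}C}\comp f_R^n$.

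For the reverse direction of each equivalence, I would start with the hypothesized complete weakly \'etale $F$-finite $R$-algebra $S$, set $K := S/\m S$, and invoke Lemma~\ref{Factorization of Frobenius} for this $K$ to obtain a factorized pushout diagram into which $S$ fits. Running the same transfer lemmas in the opposite direction yields the desired unprimed conditions. For the equivalence involving \eqref{thm120712a'5}, I would import the derived reflexivity computation from Case~1 of the proof of Theorem~\ref{thm120712a}: Hom--tensor adjointness together with the identification $\Rhom[S]{\Lotimes S C}{\Lotimes S D} \simeq \Lotimes S{C^{\dagger_D}}$ and~\cite[(5.10) Theorem]{christensen:scatac} translates the derived reflexivity of $\Lotimes{^n\!R}{C^{\dagger_D}}$ over $R$ into the corresponding statement over $S$.

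The main obstacle is verifying that the arbitrary $S$ appearing in a primed hypothesis really does fit into a factorized pushout diagram as produced by Lemma~\ref{Factorization of Frobenius}, since that lemma builds a specific model $S_0 \cong K[\![\x]\!]/(\f)$ from a Cohen presentation $\comp R \cong k[\![\x]\!]/(\f)$. Identifying the given $S$ with $S_0$ requires the uniqueness, up to canonical isomorphism, of complete weakly \'etale local $R$-algebras with a prescribed residue field, or equivalently the observation that any such $S$ arises as the scalar extension $\comp R \otimes_{k[\![\x]\!]} K[\![\x]\!]$ along $k \hookrightarrow K$. This identification is the key technical point I expect to spell out carefully in the full write-up; once it is in hand, the transfer lemmas apply uniformly and the equivalences follow.
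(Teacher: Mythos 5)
Your overall strategy matches the paper's: the published argument simply says the proof is ``like that of Theorem~\ref{thm120712a}'' with Lemma~\ref{Factorization of Frobenius} (taking $K$ to be an algebraic closure of $k$) substituted for the cows-diagram mechanism, precisely because Frobenius does not induce a separable map on residue fields and so Remark~\ref{disc120615a}(1) is unavailable. Your choice of $K$ as the perfect closure rather than an algebraic closure is harmless since both are perfect, hence $F$-finite, which is all Lemma~\ref{Factorization of Frobenius} requires.

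The concern you flag in the last paragraph is, however, a genuine gap and deserves more than a promissory note. To run a transfer lemma in the direction (primed)$\implies$(unprimed) you must place the \emph{given} $S$ into a factorized pushout diagram against $\comp R$, i.e.\ identify $S$ with the specific model $K[\![\x]\!]/(\f)$ that Lemma~\ref{Factorization of Frobenius} manufactures from a chosen Cohen presentation. But ``weakly \'etale'' here means only flat with closed fibre a field; it does \emph{not} force $k\to K$ to be separable, so $S$ need not be formally smooth over $R$ and the uniqueness of such a complete flat local extension with $\m S=\n$ and prescribed residue field is not automatic. Rather than settle this identification, you can sidestep it: for each primed condition the hypothesized $S$ is $F$-finite, so $f_S$ is a \emph{module-finite} contracting endomorphism, and $\Lotimes SC$ is semidualizing for $S$ by Fact~\ref{semi-dual complex base change}. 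Apply Theorem~\ref{thm120615a} directly over $S$ to the complex $\Lotimes SC$; the hypothesis in (ii'), (iii'), (iv'), or (v') is exactly the corresponding item of that theorem, so $\Lotimes SC$ is dualizing for $S$, and then Fact~\ref{semi-dual complex base change} (using that the closed fibre is a field, hence Gorenstein) gives that $C$ is dualizing for $R$. Conversely, (i)$\implies$(i')$\implies$(each primed condition) follows again from Theorem~\ref{thm120615a} applied over $S$, using Lemma~\ref{Factorization of Frobenius} only to guarantee that some complete weakly \'etale $F$-finite $R$-algebra exists. With this adjustment the transfer lemmas are needed only in the forward direction, where they do apply, and the proof is complete.
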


\begin{proof}
The proof is  like that of Theorem~\ref{thm120712a}.
The only difference is in the equivalences (ii)$\iff$(ii') and (iii)$\iff$(iii'), in which we use
Lemma~\ref{Factorization of Frobenius}, where $K$ is an algebraic closure of $k$.
\end{proof}

The last result of this section is proved like Theorem~\ref{thm120808b}.

\begin{thm}\label{thm120910a}
Assume that $R$ is a local ring of prime characteristic $p>0$, and let $C$ be a semidualizing $R$-complex.
Then the following conditions are equivalent:
\begin{enumerate}[\rm(i)]
\item\label{thm120910a0}
$R$ is Gorenstein.
\item\label{thm120910a1}
$\gcdim f_R^m<\infty$ for all $m>0$, and $^n\!R\in\catbc(R)$ for all $n>0$.
\item[\rm(ii')]
For every complete weakly \'etale $F$-finite local $R$-algebra $S$, one has $^n\!S\in\catb_{\Lotimes SC}(S)$ for all $n>0$, and 
$\gkdim{\Lotimes{S}C} f_S^m<\infty$ for all $m>0$.
\item\label{thm120910a2}
$\gcdim f_R^m<\infty$ for infinitely many $m>0$, and $^n\!R\in\catbc(R)$ for some $n>0$.
\item[\rm(iii')]
There is a complete weakly \'etale $F$-finite local $R$-algebra $S$  such that $^n\!S\in\catb_{\Lotimes SC}(S)$ for some $n>0$, and 
$\gkdim{\Lotimes{S}C} f_S^m<\infty$ for infinitely many $m>0$.
\item\label{thm120910a3}
$\gcdim f_R^m<\infty$ for some $m>0$, and $^n\!R\in\catbc(R)$ for infinitely many $n>0$.
\item[\rm(iv')]
There is a complete weakly \'etale $F$-finite local $R$-algebra $S$  such that $^n\!S\in\catb_{\Lotimes SC}(S)$ for infinitely many $n>0$, and 
$\gkdim{\Lotimes{S}C} f_S^m<\infty$ for some $m>0$.
\end{enumerate}
\end{thm}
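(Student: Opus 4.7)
The plan is to mirror the proof of Theorem~\ref{thm120808b} almost verbatim, replacing cows diagrams with complete weakly \'etale $F$-finite local $R$-algebras of the kind produced by Lemma~\ref{Factorization of Frobenius}, and invoking the Frobenius analogue Theorem~\ref{thm120712a'} in place of Theorem~\ref{thm120712a}. The implications (ii)$\implies$(iii) and (ii)$\implies$(iv) are trivial, and the main content lies in the remaining equivalences.

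For the primed/unprimed equivalences (ii)$\iff$(ii'), (iii)$\iff$(iii'), and (iv)$\iff$(iv'), I would carry over the proof of Lemma~\ref{lem120906a}. Given any complete weakly \'etale $F$-finite local $R$-algebra $S$, the map $R\to S$ is faithfully flat, so Lemma~\ref{lem120810a} together with~\cite[(5.3.a) Proposition]{christensen:scatac} gives, for each $n\geq 1$, that $^n\!R\in\catbc(R)$ if and only if $^n\!S\in\catb_{\Lotimes SC}(S)$. Simultaneously, Lemma~\ref{lem120517a}\eqref{lem120517a1} applied to the factorized pushout diagram provided by Lemma~\ref{Factorization of Frobenius} gives, for each $m\geq 1$, that $\gcdim f_R^m<\infty$ if and only if $\gkdim{\Lotimes SC}f_S^m<\infty$; the quantifiers therefore match cleanly. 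The existence part of Lemma~\ref{Factorization of Frobenius} (one can always produce such an $S$ by working with an algebraic closure of the residue field) then handles all three primed/unprimed equivalences.

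The implication (i)$\implies$(ii) is immediate: if $R$ is Gorenstein, then $C\sim R$ by~\cite[(8.6) Corollary]{christensen:scatac}, so $\catbc(R)=\catb_R(R)$ contains every $R$-module and in particular $^n\!R$ for all $n>0$; furthermore $\gcdim f_R^m=\gdim f_R^m<\infty$ for all $m>0$ by~\cite[6.6.~Theorem]{iyengar:golh}. The implication (iv)$\implies$(i) uses Theorem~\ref{thm120808a} (applicable since the Frobenius is contracting) to force $C\sim R$, so $\gcdim f_R^m=\gdim f_R^m$, and~\cite[6.6.~Theorem]{iyengar:golh} again yields Gorensteinness of $R$.

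The only nontrivial step is (iii)$\implies$(i). First, Theorem~\ref{thm120712a'}, whose condition (iv) is precisely the hypothesis that $\gcdim f_R^m<\infty$ for infinitely many $m$, forces $C$ to be dualizing. Reduction to the complete case is permissible because $R$ is Gorenstein if and only if $\comp R$ is, and both hypotheses transfer through the natural map $R\to\comp R$ via Fact~\ref{1} and Lemma~\ref{lem120810a}. Taking a minimal Cohen factorization $\comp R\xra{\dot\tau}R'\xra{\tau'}\comp R$ of $\comp{f_R}^n$, weakly regular base change makes $\Lotimes{R'}{C}$ dualizing for $R'$; then~\cite[(5.3.b) Proposition]{christensen:scatac} converts the Bass class condition into $\comp R\in\catb_{\Lotimes{R'}{C}}(R')$, whence~\cite[4.4.~Theorem]{christensen:ogpifd} gives $\text{G-}\id_{R'}(\comp R)<\infty$, and~\cite[Theorem A]{foxby:cmfgidgr} delivers that $R'$ is Gorenstein. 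Flatness of $\dot\tau$ then transfers Gorensteinness back to $\comp R$. The principal obstacle is the accurate bookkeeping involved in the passage to the completion, but the pattern is exactly parallel to the analogous step in Theorem~\ref{thm120808b}.
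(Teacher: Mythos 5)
Your proposal is correct and matches the paper's intent exactly: the paper's own proof of Theorem~\ref{thm120910a} is the single sentence ``The last result of this section is proved like Theorem~\ref{thm120808b},'' and you spell out precisely that analogy, substituting complete weakly \'etale $F$-finite local $R$-algebras (produced via Lemma~\ref{Factorization of Frobenius} with $K$ an algebraic closure of $k$) for cows diagrams, Theorem~\ref{thm120712a'} for Theorem~\ref{thm120712a}, and carrying over Lemma~\ref{lem120906a}, Lemma~\ref{lem120517a}, and the citations to~\cite{christensen:scatac}, \cite{christensen:ogpifd}, \cite{foxby:cmfgidgr}, and~\cite{iyengar:golh} unchanged. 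The one minor infelicity is the notation $\comp{f_R}^n$ where you mean $f_{\comp R}^n$; the reduction to the complete case and subsequent Cohen factorization argument are otherwise sound.
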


\appendix
\section{A Construction of Endomorphisms}\label{sec120627a}

The point of this section is found in Proposition~\ref{prop120629a}, which guarantees the existence of 
non-trivial cows diagrams~\eqref{eq120712a}; see Remark~\ref{rmk120803a}.

\begin{lem}\label{lem120627a}
Let $\alpha\colon k\to k$ be a field endomorphism.
Then there is a   commutative diagram of field extensions
\begin{equation}\label{eq120627a}
\begin{split}
\xymatrix{
k\ar[r]^{\beta}\ar[d]_{\alpha}
&K\ar[d]^{\comp\alpha} \\
k\ar[r]^{\beta}
&K
}
\end{split}
\end{equation}
such that $\comp \alpha$ is an isomorphism.
Moreover, if $\alpha$ is separable, then so is $\beta$.
\end{lem}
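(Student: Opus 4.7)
My plan is to construct $K$ as the filtered colimit of the direct system
$$k_0 \xrightarrow{\alpha} k_1 \xrightarrow{\alpha} k_2 \xrightarrow{\alpha} \cdots$$
in which every $k_n$ equals $k$ and every transition map is $\alpha$. Because the transition maps are field homomorphisms (hence injective), the colimit $K$ is a field. Let $\mu_n \colon k_n \to K$ denote the structural maps, so that $\mu_{n+1}\circ\alpha = \mu_n$, and set $\beta := \mu_0$.

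Next, I would define $\comp\alpha \colon K \to K$ by the rule $\comp\alpha(\mu_n(x)) = \mu_n(\alpha(x))$. The routine check that this is well-defined uses the fact that $\alpha$ is injective, so the equivalence relation defining elements of the colimit is compatible with postcomposition with $\alpha$. Taking $n=0$ gives $\comp\alpha\circ\beta = \beta\circ\alpha$, which is the required commutativity. Being a field endomorphism, $\comp\alpha$ is automatically injective; for surjectivity, given any element $\mu_n(x) \in K$ one observes
$$\comp\alpha(\mu_{n+1}(x)) = \mu_{n+1}(\alpha(x)) = \mu_n(x),$$
so $\mu_{n+1}(x)$ is the desired preimage. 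Thus $\comp\alpha$ is an isomorphism.

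For the separability statement, the key observation is that if one writes $L_n := \mu_n(k_n) \subseteq K$, then the inclusion $L_n \hookrightarrow L_{n+1}$ is, via the isomorphisms $\mu_n$ and $\mu_{n+1}$, identified with the extension $\alpha(k) \hookrightarrow k$, which is separable by hypothesis. A finite tower of separable extensions is separable, so each $L_0 \hookrightarrow L_n$ is separable, and $\beta(k) = L_0 \hookrightarrow K = \bigcup_n L_n$ inherits separability because separability is preserved under filtered unions of intermediate extensions. I expect the only mildly subtle point to be this last step, but it follows either from the characterization of separability via finitely generated subextensions (in the algebraic case) or, in general, from the fact that filtered colimits preserve reducedness of $K\otimes_k k^{1/p}$ in characteristic $p$.
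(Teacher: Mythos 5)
Your proof is correct and takes essentially the same approach as the paper: construct $K$ as the filtered colimit of $k\xrightarrow{\alpha}k\xrightarrow{\alpha}\cdots$, define $\comp\alpha$ by $\comp\alpha(\mu_n(x))=\mu_n(\alpha(x))$ (which is exactly what the universal property produces), verify surjectivity by shifting indices, and handle separability by reducing to the finite stages $L_n$ and invoking the finitely-generated-subextension criterion. The only cosmetic difference is that the paper phrases the separability step via separability of the compositions $\alpha^n$ rather than via the tower $L_0\subseteq L_1\subseteq\cdots$, but these are the same observation.
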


\begin{proof}
Let $K$ be the direct limit in the category of fields and field extensions
of the directed system
$k\xra{\alpha}k\xra{\alpha}k\xra{\alpha}\cdots$.
The universal mapping property for direct limits provides a unique field endomorphism
$\comp\alpha\colon K\to K$ making the following diagram commute
$$\xymatrix{
k\ar[r]^{\alpha}
\ar[dd]_{\alpha}
\ar[rrrd]_<<<<<<{\beta_1}
&k\ar[r]^{\alpha}
\ar[dd]_{\alpha}
\ar[rrd]^{\beta_2}
&\cdots
\\
&&&K\ar@{-->}[dd]^{\comp\alpha}
\\
k\ar[r]^{\alpha}
\ar[rrrd]_<<<<<<{\beta_1}
&k\ar[r]^{\alpha}
\ar[rrd]^{\beta_2}
&\cdots
\\
&&&K
}$$
where the maps $\beta_i$ are the universal ones for $K$. 
With $\beta:=\beta_1$, we have the commutativity of~\eqref{eq120627a},
so we need to show that $\comp\alpha$ is an isomorphism.
Since $\comp\alpha$ is a morphism of fields, it is injective, so we need only verify surjectivity.
For this, let $x\in K=\cup_{i=1}^\infty\im(\beta_i)$. Then there is an index $i$ 
and an element $y\in k$ such that
$$x=\beta_i(y)=\beta_{i+1}(\alpha(y))=\comp\alpha(\beta_{i+1}(y))\in\im(\comp\alpha).
$$
this yields the surjectivity of $\comp\alpha$.

To complete the proof, assume that $\alpha$ is separable. 
It follows that $\alpha^n$ is separable for each $n\geq 1$.
To show that $\beta$ is separable, we need to show that for every intermediate field
$k\to F\to K$ such that $F$ is finitely generated as a field extension of $k$, the extension $k\to F$ is separably generated;
see~\cite[Theorem VI.2.10]{hungerford:a}.
Again, $K$ is the union of the images
of the $\beta_i$.
The finite generation condition on $F$ implies that the generators for
$F$ over $k$ lie in some ``finite stage'' of the limit. In other words, the commutative diagram
$$\xymatrix{
k\ar[r]\ar[rd]_{\beta}&F\ar[d]\\ & K}$$
gives rise to another commutative diagram
$$\xymatrix{
k\ar[r]\ar[rd]_{\alpha^n}&F\ar[d]\\ & k}$$
for some $n\geq 1$. Since $\alpha^n$ is separable, the intermediate
extension $k\to F$ is separably generated, as desired.
\end{proof}

The next result takes a construction of Grothendieck~\cite[Proposition (0.10.3.1)]{grothendieck:ega3-1} and manipulates it a bit,
similarly to Lemma~\ref{Factorization of Frobenius}.

\begin{lem}\label{lem120629b}
Let $\vf\colon (R,\m,k)\to(R,\m,k)$ be a contracting endomorphism,
and consider a commutative diagram of field extensions
\begin{equation}\label{eq120629a}
\begin{split}
\xymatrix{
k\ar[r]^{\beta}\ar[d]_{\ol\vf}
&K\ar[d]^{\comp\alpha} \\
k\ar[r]^{\beta}
&K
}
\end{split}
\end{equation}
where $\ol\vf$ is the map induced by $\vf$.
Assume that $\beta$ is separable.
Then there is a commutative diagram of local ring homomorphisms
\begin{equation}\label{eq120629b}
\begin{split}
\xymatrix{
(R,\m,k)\ar[r]^{\wti\beta}\ar[d]_{\vf}
&(S,\m S,K)\ar[d]^{\psi} \\
(R,\m,k)\ar[r]^{\wti\beta}
&(S,\m S,K)
}
\end{split}
\end{equation}
such that $S$ is complete,
$\wti\beta$ is weakly \'etale,
$\psi$ is a contracting endomorphism, and the diagram induced by~\eqref{eq120629b}
on residue fields is~\eqref{eq120629a}.
\end{lem}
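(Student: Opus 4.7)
The plan is to invoke Grothendieck's construction~\cite[Proposition (0.10.3.1)]{grothendieck:ega3-1}, following the template used in the proof of Lemma~\ref{Factorization of Frobenius}, to first produce a complete local noetherian $R$-algebra $(S,\m S,K)$ together with a weakly \'etale structure map $\wti\beta\colon R\to S$ realizing the residue-field extension $\beta$. The separability of $\beta$ (given by hypothesis, and guaranteed in practice by Lemma~\ref{lem120627a} when $\alpha$ is separable) is precisely what makes this construction available. This step produces $S$, $\wti\beta$, and the horizontal arrows of~\eqref{eq120629b} together with their induced residue-field map.

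Next, I will obtain $\psi$ by exploiting the functoriality (universal property) of Grothendieck's construction. Endow the target copy of $S$ with the $R$-algebra structure coming from the composition $R\xra{\vf} R\xra{\wti\beta} S$; on residue fields this composition induces $\beta\circ\ol\vf$, which by the commutativity of~\eqref{eq120629a} equals $\comp\alpha\circ\beta$. Thus $\comp\alpha\colon K\to K$ is a valid lift over $\beta$ of the induced residue-field map, and the universal property of the construction produces a unique local homomorphism $\psi\colon S\to S$ of $R$-algebras (with the target structure just described) that induces $\comp\alpha$ on residue fields. Being an $R$-algebra map is precisely the identity $\psi\wti\beta=\wti\beta\vf$, so~\eqref{eq120629b} commutes with the required action on residue fields.

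Finally, I will check that $\psi$ is contracting. Choosing $n\gg 0$ so that $\vf^n(\m)\subseteq\m^2$, and using $\psi^n\wti\beta=\wti\beta\vf^n$ together with $\m S=\wti\beta(\m)S$, one computes
\begin{equation*}
\psi^n(\m S)\subseteq \psi^n(\wti\beta(\m))\cdot S = \wti\beta(\vf^n(\m))\cdot S \subseteq \wti\beta(\m^2)S=(\m S)^2,
\end{equation*}
so $\psi^n(\m S)\subseteq(\m S)^2$ for $n\gg 0$, as required.

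The main obstacle is producing $\psi$ with all the required compatibilities; this rests on the universal property of Grothendieck's construction, which is exactly the reason for imposing separability on $\beta$ (and for isolating Lemma~\ref{lem120627a}). Once the triple $(S,\wti\beta,\psi)$ is in hand, the contracting property and the residue-field computation are formal consequences of the construction.
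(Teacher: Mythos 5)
Your overall skeleton matches the paper's: invoke Grothendieck~\cite[Proposition (0.10.3.1)]{grothendieck:ega3-1} to obtain the weakly \'etale map $\wti\beta\colon R\to S$ with $S$ complete and residue field $K$, construct $\psi$, and verify the contracting property. Your final computation showing $\psi^n(\m S)\subseteq(\m S)^2$ is exactly the one in the paper. However, the construction of $\psi$ has a real gap: you invoke a ``universal property'' or ``functoriality'' of Grothendieck's construction, but no such universal property exists. EGA (0.10.3.1) is a pure existence statement --- it produces \emph{some} flat complete local $R$-algebra with prescribed residue field, but the output is not canonical and carries no universal mapping property that would hand you a map $\psi$, let alone a unique one.

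The mechanism that actually produces $\psi$ --- and the reason separability of $\beta$ is imposed --- is formal smoothness: since $\wti\beta$ is weakly \'etale and $\beta$ is separable, $\wti\beta$ is formally smooth. You then use that $S$ is complete, so $S$ is the inverse limit of $S/(\m S)^n$, each truncation map $S/(\m S)^{n+1}\to S/(\m S)^n$ being a nilpotent extension of $R$-algebras (where the target copy of $S$ carries the $R$-algebra structure via $\wti\beta\vf$). The $R$-algebra map $S\to S/\m S=K\xra{\comp\alpha}K=S/\m S$ is well defined precisely because of the commutativity of~\eqref{eq120629a}, as you correctly observe; formal smoothness then lets you lift this map through the tower to $\psi\colon S\to S$. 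Replacing ``universal property of the construction'' with this lifting argument closes the gap. Note also that the resulting $\psi$ is \emph{not} unique in general --- formal smoothness gives existence of lifts, not uniqueness (that would require $\Omega_{S/R}=0$, which fails unless $K/k$ is separable algebraic) --- but uniqueness is not needed for the lemma.
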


\begin{proof}
By~\cite[Proposition (0.10.3.1)]{grothendieck:ega3-1} there is a weakly \'etale  local
ring homomorphism $\wti\beta\colon(R,\m,k)\to(S,\m S,K)$.
Replace $S$ with its completion if necessary to assume that $S$ is complete.
Since the induced map $\beta\colon k\to K$ is separable, we conclude that
$\wti\beta$ is formally smooth. Since $S$ is complete, a standard application of smoothness
provides a local ring homomorphism $\psi\colon S\to S$ such that 
(1) $\psi$ induces $\comp\alpha$ on residue fields, and (2) $\psi$ respects the $R$-algebra
structures given by $\wti\beta$ and $\wti\beta\vf$, that is, such that the
diagram~\ref{eq120629b} commutes.

It remains to show that $\psi$ is contracting.
For this, fix an integer $i\geq 1$ such that $\vf^i(\m)\subseteq\m^2$. Then
$$\psi^i(\m S)\subseteq \psi^i(\m S)S=\psi^i(\wti\beta(\m))S=\wti\beta(\vf^i(\m))S\subseteq\wti\beta(\m^2)S=(\m S)^2$$
as desired.
\end{proof}

\begin{prop}\label{prop120629a}
Let $\vf\colon (R,\m,k)\to(R,\m,k)$ be a contracting endomorphism
such that $R/\vf(\m)R$ is artinian and the induced map $\ol\vf\colon k\to k$ is separable.
Then there is a commutative diagram of local ring homomorphisms
\begin{equation}\label{eq120629c}
\begin{split}
\xymatrix{
(R,\m,k)\ar[r]^{\wti\beta}\ar[d]_{\vf}
&(S,\m S,K)\ar[d]^{\psi} \\
(R,\m,k)\ar[r]^{\wti\beta}
&(S,\m S,K)
}
\end{split}
\end{equation}
such that $\wti\beta$ is weakly \'etale and the induced map $k\to K$ is separable (hence $\wti\beta$ is formally smooth),
$S$ is complete, and
$\psi$ is a module-finite contracting endomorphism.
\end{prop}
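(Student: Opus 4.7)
\begin{para}[Proof Plan for Proposition~\ref{prop120629a}]
The plan is to chain Lemmas~\ref{lem120627a} and~\ref{lem120629b} together and then upgrade the contracting endomorphism produced to a module-finite one using the artinian hypothesis and a standard criterion for module-finiteness.

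First, I would apply Lemma~\ref{lem120627a} to the separable field endomorphism $\ol\vf\colon k\to k$ to obtain a commutative diagram~\eqref{eq120627a} of field extensions in which $\comp\alpha$ is an isomorphism and $\beta$ is separable. Feeding this field diagram into Lemma~\ref{lem120629b} produces the commutative diagram~\eqref{eq120629c} of local ring homomorphisms with $\wti\beta$ weakly \'etale, $S$ complete, $\psi$ a contracting endomorphism, and residue field diagram equal to~\eqref{eq120627a}. In particular the induced extension $k\to K$ is $\beta$, which is separable, and a weakly \'etale local map with separable residue field extension is formally smooth. Thus every conclusion of the proposition is already in hand except the module-finiteness of $\psi$.

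To obtain that, I would exploit the artinian hypothesis. The ring $R/\vf(\m)R$ is noetherian and artinian, hence has finite length over $R$, so $\vf(\m)R$ is an $\m$-primary ideal and $\dim_k\bigl(R/\vf(\m)R\bigr)$ is finite. Because $\wti\beta$ is flat and $\m S=\wti\beta(\m)S$, flat base change yields an isomorphism
\[
S/\wti\beta(\vf(\m))S\;\cong\;\Otimes{S}{R/\vf(\m)R},
\]
which is therefore a finite-dimensional $K$-vector space of the same length. The commutativity $\psi\wti\beta=\wti\beta\vf$ gives $\psi(\m S)S=\wti\beta(\vf(\m))S$, so $S/\psi(\m S)S$ is finite over $K=S/\m S$ and $\psi(\m S)S$ is $\m S$-primary in $S$.

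Finally, since $S$ is complete, $\psi(\m S)S$ is $\m S$-primary, and $S/\psi(\m S)S$ is finitely generated over the residue field $K$, the standard topological-Nakayama-type criterion (as in Matsumura~\cite[Theorem~8.4]{matsumura:crt}) applied to $\psi\colon(S,\m S)\to(S,\m S)$ forces $S$ to be module-finite over itself via $\psi$; in other words, $\psi$ is module-finite, as required. The substantive constructive work has all been done in Lemmas~\ref{lem120627a} and~\ref{lem120629b}, so the only (mild) obstacle here is to record carefully that the artinian hypothesis translates, via flat base change along $\wti\beta$, into exactly the finite-closed-fibre condition required by the module-finiteness criterion.
\end{para}
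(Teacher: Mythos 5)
Your plan follows the same route as the paper: apply Lemma~\ref{lem120627a} to produce the field diagram with $\comp\alpha$ an isomorphism and $\beta$ separable, feed it into Lemma~\ref{lem120629b}, and then use the artinian hypothesis together with the complete Nakayama criterion (Matsumura 8.4) to upgrade $\psi$ to module-finite. Your verification that $S/\psi(\m S)S$ has finite length via flat base change along $\wti\beta$ is a legitimate alternative to the paper's argument (which instead observes directly that $\m^n\subseteq\vf(\m)R$ for $n\gg 0$ and transports this to $(\m S)^n\subseteq\psi(\m S)S$); both give the same conclusion.

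There is, however, a small but genuine gap in your final step. To conclude via Matsumura 8.4 that $\psi$ is module-finite, what you must control is $^\psi\!S/(\m S)\cdot{}^\psi\!S={}^\psi\!(S/\psi(\m S)S)$, where $^\psi\!(-)$ denotes restriction of scalars along $\psi$, and you must show this $\psi$-twisted $S$-module has finite length. You have shown that $S/\psi(\m S)S$ has finite length over $S$ with its \emph{natural} $S$-structure, but the twisted module $^\psi\!(S/\psi(\m S)S)$ can fail to have finite length if the residue field map induced by $\psi$ is a proper extension. This is precisely why the paper (and your first step) arranges, via Lemma~\ref{lem120627a}, that the induced residue field endomorphism $\comp\alpha\colon K\to K$ is an \emph{isomorphism}: then restriction of scalars along $\psi$ preserves finite length, since each composition factor $K$ restricts to $^\psi\!K\cong K$. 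You have the isomorphism available, but your write-up never invokes it at the point where it is actually needed; as written, the sentence ``$S/\psi(\m S)S$ is finitely generated over the residue field $K$'' elides the distinction between the natural and twisted $K$-structures. (A related minor imprecision: $S/\wti\beta(\vf(\m))S$ is a finite-length $S$-module and an artinian local ring with residue field $K$, but it need not literally be a $K$-vector space, e.g., in mixed characteristic.) Inserting the observation that $\comp\alpha$ is an isomorphism, hence $^\psi\!(-)$ preserves finite length, closes the gap and brings your argument into full agreement with the paper's.
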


\begin{proof}
Lemma~\ref{lem120627a}
provides a   commutative diagram of field extensions
\begin{equation}\label{eq120710a}
\begin{split}
\xymatrix{
k\ar[r]^{\beta}\ar[d]_{\alpha}
&K\ar[d]^{\comp\alpha} \\
k\ar[r]^{\beta}
&K
}
\end{split}
\end{equation}
such that $\comp \alpha$ is an isomorphism and $\beta$ is separable.
Hence, Lemma~\ref{lem120629b} implies that
we have a commutative diagram of local ring homomorphisms
\begin{equation}\label{eq120710b}
\begin{split}
\xymatrix{
(R,\m,k)\ar[r]^{\wti\beta}\ar[d]_{\vf}
&(S,\m S,K)\ar[d]^{\psi} \\
(R,\m,k)\ar[r]^{\wti\beta}
&(S,\m S,K)
}
\end{split}
\end{equation}
such that $\wti\beta$ is weakly \'etale,
$\psi$ is a contracting endomorphism, and the diagram induced by~\eqref{eq120710b}
on residue fields is~\eqref{eq120710a}.
To avoid ambiguous notation, given an $S$-module $M$, 
let $^{\psi}\!M$ denote the $S$-module structure on $M$ given by restriction of scalars along $\psi$.

It remains to show that $\psi$ is module-finite. 
Since $S$ is complete and local,
the complete Nakayama's Lemma~\cite[Theorem 8.4]{matsumura:crt}
says that it suffices to show that $^\psi\!(S/\psi(\m S)S)$ has finite length over $S$.
Since the  map $\comp\alpha\colon S/\m S\to S/\m S$ induced by $\psi$ is an isomorphism,
if $M$ is an $S$-module of finite length, then $^\psi \! M$ also has finite length;
in fact, the lengths are the same in this case.
Thus, it remains to show that $S/\psi(\m S)S$  has finite length over $S$.

By assumption, the quotient ring $R/\vf(\m)R$ is artinian.
This means that $\m^n\subseteq\vf(\m)R$ for $n\gg 0$, so 
$$(\m S)^n=\wti\beta(\m^n)S\subseteq\wti\beta(\vf(\m))S=\psi(\wti\beta(\m))S
=\psi(\m S)S.$$
We conclude that $S/\psi(\m S)S$ is artinian,
so it has finite length over $S$.
\end{proof}

\section*{Acknowledgments} 
We are grateful to Hamid Rahmati for pointing out~\cite[Remark 5.9]{avramov:himce}.


\providecommand{\bysame}{\leavevmode\hbox to3em{\hrulefill}\thinspace}
\providecommand{\MR}{\relax\ifhmode\unskip\space\fi MR }
\providecommand{\MRhref}[2]{%
  \href{http://www.ams.org/mathscinet-getitem?mr=#1}{#2}
}
\providecommand{\href}[2]{#2}

\end{document}